\let\mathcal\mathscr
\title{\sc Compact hyper-Kähler categories}
\author{\sc Roland Abuaf \footnote{E-mail :\textit{rabuaf@gmail.com}.}\\
             \sc With an appendix by Gr\'egoire Menet \footnote{Institudo de Matematica estatistica e Camputaçao Cientifica, UNICAMP, Cidade Universitaria Zeferio Vaz, Campinas, Brésil. E-mail :\textit{menet@ime.unicamp.br}.}}
\let\mathcal\mathscr
\newtheorem{theo}{Theorem}[subsection]
\newtheorem{exem}[theo]{Example}
\newtheorem{rem}[theo]{Remark}
\newtheorem{prop}[theo]{Proposition}
\newtheorem{quest}[theo]{Question}
\newtheorem{defi}[theo]{Definition}
\newtheorem{lem}[theo]{Lemma}
\newtheorem{cor}[theo]{Corollary}
\newtheorem{conj}[theo]{Conjecture}
\def\DB{\mathrm{D^{b}}}
\def\OO{\mathcal{O}}
\def\w{\omega}
\def\DP{\mathrm{D^{perf}}}
\def\R0{\mathrm{R^{0}}}
\def\HH{\mathrm{HH}}
\def\Hh{\mathcal{H}om}
\def\HHH{\mathrm{Hom}}
\def\LL{\mathrm{\textbf{L}}}
\def\RR{\mathrm{\textbf{R}}}
\def\OO{\mathcal{O}}
\def\EE{\mathcal{E}}
\def\D{\mathcal{D}}
\def\d{\delta}
\def\M{\mathcal{M}}
\def\C{\mathcal{C}}
\def\F{\mathcal{F}}
\def\GG{\mathrm{G}}
\def\G{\mathcal{G}}
\def\H{\mathcal{H}}
\def\T{\mathcal{T}}
\def\A{\mathcal{A}}
\def\B{\mathcal{B}}
\def\X{\mathcal{X}}
\def\w{\omega}
\DeclareMathOperator{\sing}{Sing}
\DeclareMathOperator{\Pic}{Pic}
\DeclareMathOperator{\CC}{\mathbb{C}}
\DeclareMathOperator{\ch}{ch}
\newcommand{\eq}[1][r]
{\ar@<-3pt>@{-}[#1]
\ar@<-1pt>@{}[#1]|<{}="gauche"
\ar@<+0pt>@{}[#1]|-{}="milieu"
\ar@<+1pt>@{}[#1]|>{}="droite"
\ar@/^2pt/@{-}"gauche";"milieu"
\ar@/_2pt/@{-}"milieu";"droite"}
\newcommand{\incl}[1][r]
  {\ar@<-0.2pc>@{^(-}[#1] \ar@<+0.2pc>@{-}[#1]}
\newenvironment{proof}
{
\noindent
\textit{\underline{Proof}} :\\
$\blacktriangleright\;$%
}
{\hspace{\stretch{1}}%
$\blacktriangleleft$}
\begin{document}

\maketitle

\begin{abstract}
We define and study the notion of \textit{hyper-Kähler category}. On the theoretical side, we focus on construction techniques and deformation theory of such categories. We also study in details some examples : \textit{non-commutative Hilbert schemes of points} on a $K3$ surface and a categorical resolution of a \textit{relative compactified Prymian} constructed by Markushevich and Tikhomirov.

\end{abstract}

\vspace{\stretch{1}}

\newpage

\tableofcontents

\begin{section}{Introduction}

\begin{subsection}{Background}
An irreducible holomorphically symplectic manifold (or compact hyper-K\"ahler manifold) is a smooth compact simply connected K\"ahler manifold with a unique (up to scalar) nowhere degenerate holomorphic $2$-form. Together with complex tori and Calabi-Yau manifolds, such varieties are building blocks for the decomposition of Ricci-flat manifolds (see \cite{bogomolov, beauville}). However, contrary to the former two, it is quite hard to produce many different examples (or to classify them all) of compact hyper-K\"ahler manifolds. Up to deformation, the following are the only known examples of such manifolds:
\begin{itemize}
\item  $S^{[n]}$, the Hilbert-Douady scheme of $n$ points on a $K3$ surface (see \cite{beauville}),
\item  $\mathrm{K}_n(A)$, the generalized K\"ummer variety of level $n$ associated to an abelian surface (see \cite{beauville}),
\item  A crepant resolution of $\M_S(2,0,4)$, the moduli space of semi-stable rank-$2$ torsion free sheaves with $c_1 = 0$ and $c_2 = 4$ on a $K3$ surface (see \cite{OG1}),
\item  A construction similar to the previous one, but where the $K3$ surface is replaced by an abelian surface (see \cite{OG2}).

\end{itemize} 

On the other hand, the derived categories of compact hyper-K\"ahler manifolds form an extremely interesting playground to test Kontsevich's Homological Mirror Symmetry conjecture. Indeed, one expects that such categories have a lot of autoequivalences which do not come from automorphisms of the complex structure but are instead related to Dehn twists along lagrangian projective spaces in the mirror manifold (see \cite{H-T}). \footnote{One also expects the mirror of a hyper-K\"ahler manifold to be a twistor deformation of itself (see \cite{verbi1, huy1}). Combined with Konstevich's HMS conjecture, this should give rather strong constraints on the derived category of a compact hyper-K\"ahler manifold.} Hence, it seems of high importance to have more examples of derived categories of compact hyper-K\"ahler manifolds. Or perhaps not so much derived categories of compact hyper-K\"ahler manifolds at such, but we definitively need more examples of triangulated categories which closely look like these derived categories.

\bigskip

The purpose of this paper is to introduce the notion of \textit{compact hyper-K\"ahler categories}, to study some of their basic properties and to provide some interesting new examples. Roughly speaking, a compact hyper-K\"ahler category of dimension $2n$ is a smooth compact simply connected category with a Serre functor given by the translation by $[2n]$ and endowed with a unique (modulo scalar) non-degenerate categorical $2$-form (all these notions will be made precise in the sections $2$ and $3$ of this paper).

One easily shows that if $X$ is an algebraic variety then $\DB(X)$ is a compact hyper-K\"ahler category if and only if $X$ is a compact hyper-K\"ahler manifold. Our main technique to construct new examples of such categories is based on Kuznetsov's theory of categorical crepant resolution of singularities (see \cite{kuz1}). It is well known that one can produce a lot of singular holomorphically symplectic varieties (see \cite{Mukmuk}) and that crepant resolutions of such varieties are holomorphically symplectic manifolds. Unfortunately, experience tells us that it is almost always impossible to find crepant resolutions of interesting singular holomorphically symplectic varieties (see \cite{Choy-Kiem, Choy-Kiem2, Kaledin-Lehn, Lehn-Sorger, KLS, Marku-Tikho, sacca}).

\bigskip

It is however not too difficult to produce \textit{categorical strongly crepant resolutions of singularities} of some nice singular holomorphically symplectic varieties. For instance, if $X$ is a hyper-K\"ahler manifold and $\GG$ is a finite group of automorphisms of $X$ preserving the symplectic form, then $X/ \GG$ admits a categorical strongly crepant resolution of singularities (see Theorem \ref{quotienttheo} below for a more general statement). The existence of categorical crepant resolution for all quotient singularities is in clear contrast with the known results in the commutative world. It is indeed a notoriously difficult problem to decide when a quotient singularity of dimension bigger than $4$ admits a crepant resolution \cite{Kaledin-quotient, BKR}.

\bigskip

Hence, it seems that examples of compact hyper-K\"ahler spaces are way easier to construct in the non-commutative setting. Given the scarcity of examples of commutative hyper-K\"ahler manifolds, the ease with which one can construct non-commutative incarnations of such varieties plainly justifies, in my opinion, the detailed study of such \textit{hyper-K\"ahler categories}. Furthermore, unexpected properties of such categories will probably be discovered in the near future and they will certainly shed a new light on the algebraic study of compact hyper-K\"ahler manifolds.

\end{subsection}

\begin{subsection}{Overview of the paper}

Let me give a quick overview of the theory of compact hyper-K\"ahler categories developed in this paper. First of all one would like to give a definition of compact hyper-K\"ahler categories which is invariant by equivalence. The work of Huybrechts and Niper-Wisskirchen \cite{Huy-Nieper} suggests that it is possible in some geometric cases. Indeed, they prove that if $X_1$ and $X_2$ are derived equivalent smooth projective varieties, then $X_1$ is hyper-K\"ahler if and only if $X_2$ is hyper-K\"ahler. A complete definition of compact hyper-K\"ahler categories will be given in section 3 of this paper. For now, let me focus on an important special case:

\begin{defi}
Let $X$ be a smooth projective variety and $\T \subset \DB(X)$ be a full admissible subcategory and assume furthermore that $\OO_{X} \in \T$. The category $\T$ is said to be \textbf{compact hyper-K\"ahler of dimension 2m (with respect to its embedding in $\DB(\X)$)} if the Serre functor of $\T$ is the shift by $2m$ and $H^{\bullet}(\OO_{X}) = \mathbb{C}[t]/t^{m+1}$, with $t$ homogeneous of degree $2$.
\end{defi}

This definition would be independent of the embedding if one could prove that for all smooth projective $Y$ such that $T$ is a full admissible subcategory of $\DB(Y)$ containing $\OO_Y$, one has $H^{\bullet}(\OO_Y) \simeq H^{\bullet}(\OO_X)$ (as graded algebras). This does not seem to be easy. Indeed, even in the case where $\T \simeq \DB(X) \simeq \DB(Y)$ and $X$ is hyper-K\"ahler, the proof given in \cite{Huy-Nieper} that $H^{\bullet}(\OO_Y) \simeq H^{\bullet}(\OO_X)$ relies on deep structural results for the Hochschild cohomology of compact hyper-K\"ahler manifolds. Nevertheless, one would expect that these two graded algebras are isomorphic whenever $X$ and $Y$ are derived equivalent. I discuss this invariance problem in more details in \cite{homounit}.

\bigskip

Given this definition, it is easy to check that $X$ is a hyper-K\"ahler manifold if and only if $\DB(X)$ is a compact hyper-K\"ahler category. Of course, one would like to construct new examples of such categories. In section $2$ (see Theorem \ref{hk-crepant}), we will prove:

\begin{theo} 
Let $Y$ be a projective manifold with Gorenstein rational singularities of dimension $2m$. Assume that $\omega_Y \simeq \OO_Y$ and that $H^{\bullet}(\OO_Y) \simeq \mathbb{C}[t]/(t^{m+1})$, with $t$ homogenous of degree $2$. Any categorical strongly crepant resolution of $Y$ is a compact hyper-K\"ahler category.
\end{theo}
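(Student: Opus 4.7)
The strategy is to unwrap Kuznetsov's notion of categorical strongly crepant resolution and verify, one clause at a time, the three requirements of the definition of a compact hyper-Kähler category of dimension $2m$.

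First I would unpack what a categorical strongly crepant resolution of $Y$ provides: a genuine resolution of singularities $\pi : \tilde Y \to Y$ with $\tilde Y$ smooth projective, together with a full admissible subcategory $\T \subset \DB(\tilde Y)$ and a fully faithful pullback functor $\pi^{\ast} : \DP(Y) \to \T$. Taking $X = \tilde Y$ supplies the ambient smooth projective variety required by the definition, and since $\pi^{\ast}\OO_Y \simeq \OO_{\tilde Y}$ lies in the image of $\pi^{\ast}$, the requirement $\OO_X \in \T$ is automatic.

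Next, the Calabi--Yau property of the Serre functor is precisely what the adjective \emph{strongly crepant} encodes. Kuznetsov's definition forces $S_{\T}$ to be $-\otimes \pi^{\ast}\omega_Y\,[\dim Y]$. Under the hypotheses $\omega_Y \simeq \OO_Y$ and $\dim Y = 2m$, this reduces to the shift $[2m]$, as required.

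The last clause, the graded algebra isomorphism $H^{\bullet}(\tilde Y, \OO_{\tilde Y}) \simeq \mathbb{C}[t]/(t^{m+1})$, is the only one requiring real work. Rational singularities of $Y$ yield a quasi-isomorphism $\OO_Y \xrightarrow{\sim} R\pi_{\ast}\OO_{\tilde Y}$, and applying $R\Gamma(Y,-)$, together with $R\Gamma(Y,-) \circ R\pi_{\ast} = R\Gamma(\tilde Y,-)$, produces the required isomorphism of graded vector spaces once combined with the hypothesis on $H^{\bullet}(\OO_Y)$.

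The main obstacle, though mild, is to ensure this last isomorphism is \emph{multiplicative} rather than just additive. One has to argue that the unit $\OO_Y \to R\pi_{\ast}\OO_{\tilde Y}$ is a quasi-isomorphism of sheaves of $E_{\infty}$-algebras --- equivalently, that it intertwines the cup products on both sides --- which follows from the symmetric monoidality of $\pi^{\ast}$ (so that the unit of the adjunction $(\pi^{\ast}, R\pi_{\ast})$ is a morphism of algebras) but does require a moment's care. Once this multiplicativity is secured, the three clauses of the definition are all met and $\T$ is identified as a compact hyper-Kähler category of dimension $2m$.
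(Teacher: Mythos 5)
Your proposal follows essentially the same route as the paper's own proof: $\OO_X \in \T$ comes for free because a categorical resolution must contain $\LL p^* \DP(Y)$; strong crepancy plus $\omega_Y \simeq \OO_Y$ and $\dim Y = 2m$ identifies the Serre functor with the shift $[2m]$; and homological domination (i.e.\ $\RR p_* \OO \simeq \OO_Y$, which in the variety case is exactly rational singularities) transfers $H^{\bullet}(\OO_Y) \simeq \mathbb{C}[t]/(t^{m+1})$ up to the ambient space. Two caveats are worth flagging. First, in the paper the ambient object of a categorical (strongly crepant) resolution is a smooth projective Deligne--Mumford \emph{stack} homologically dominating $Y$, not necessarily a genuine resolution of singularities by a variety; this extra generality is what makes the theorem applicable to the paper's main examples (e.g.\ $\DB(Coh^{\GG}(X))$ resolving $X/\GG$), although every step of your argument carries over verbatim to stacks, so this is a matter of scope rather than a logical defect. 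Second, the full definition of a compact hyper-K\"ahler category asks for a \emph{unique homological unit} isomorphic to $\mathbb{C}[t]/(t^{m+1})$, not merely the computation of $H^{\bullet}(\OO_X)$; your computation is the key input, but one must still check the clauses of that definition, which the paper does by observing that $a \mapsto \mathrm{id}_{\F} \otimes a$ gives functorial graded algebra maps $\mathbb{C}[t]/(t^{m+1}) \rightarrow \HHH^{\bullet}(\F,\F)$ for all $\F \in \T$, split whenever the rank of $\F$ is nonzero, and that maximality (hence uniqueness) holds because $\OO_{\X}$ itself lies in $\T$ and its endomorphism algebra equals the candidate unit. (You also leave implicit the one-line verification that $\T$, being admissible in the derived category of a smooth projective stack, is smooth, compact and regular.) On the other hand, your concern about multiplicativity of the isomorphism $H^{\bullet}(\OO_Y) \simeq H^{\bullet}(\OO_{\X})$, and its resolution via the unit of the adjunction $(\LL p^*, \RR p_*)$ being a map of algebras, is a legitimate point of care that the paper simply elides.
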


This result provides us with a whole heap of examples of compact hyper-K\"ahler categories which are not deformation equivalent one to another. Indeed, let $X$ be a hyper-K\"ahler manifold and let $\GG_1$ and $\GG_2$ be two finite groups of automorphisms of $X$ preserving the symplectic form. Then, one proves (see Corollary \ref{finite}) that $\DB (Coh^{\GG_1}X)$ and $\DB(Coh^{\GG_2}X)$ are compact hyper-K\"ahler categories, where $Coh^{\GG}(X)$ is the category of $\GG$-equivariant sheaves on $X$. If $\GG_1$ and $\GG_2$ are chosen in a astute way, the orbifold topological Euler characteristic of $X/\GG_1$ is different from that of $X/\GG_2$. We deduce that $\DB(Coh^{\GG_1}X)$ and $\DB(Coh^{\GG_2}X)$ are not deformation equivalent.

\bigskip

Since the theory of compact hyper-K\"ahler spaces is based on examples which are often considered up to deformation equivalence, I would also like to discuss some properties of smooth deformations of triangulated categories of geometric origin. For example, it is often stated that Hochschild homology numbers of triangulated categories are invariant under deformation. I was unable to find a reference in the literature for such a general statement (see however \cite{Kaledin-cyclic} for some results on cyclic homology). Hence, I think it is worth providing a setting for which we can prove that the Hochschild homology numbers are indeed invariant under deformation. This will be done in the subsection $3.2.$ 

After introducing this context for deformation theory of triangulated categories of geometric origin and proving invariance of Hochschild homology for smooth deformations, I go on studying more precisely deformations of compact hyper-K\"ahler categories. The following proposition on ``small deformations'' and its proof are straightforward generalizations of the corresponding statement and proof for commutative compact hyper-K\"ahler manifolds: 

\begin{prop}
Let $X$ be a smooth projective variety, let $\T \subset \DB(X)$ be a full admissible subcategory and $B$ a smooth connected algebraic variety. Let $\D$ be a smooth deformation of $\T$ over $B$ with respect to $\pi : \X \longrightarrow B$. Assume that $\OO_{X} \in \T$ and that $\T$ is a compact hyper-K\"ahler category (with respect to its embedding in $\DB(X)$). Then, there exists a neighborhood $0 \in U \subset B$, such that $\D_b$ is compact hyper-K\"ahler (with respect to its embedding in $\DB(\X_b)$) for all $b \in U$.
\end{prop}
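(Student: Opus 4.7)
The plan is to verify, for $b$ in a neighborhood of $0 \in B$, the three conditions defining a compact hyper-K\"ahler subcategory of $\DB(\X_b)$ for $\D_b$: (i) $\OO_{\X_b} \in \D_b$, (ii) $H^{\bullet}(\OO_{\X_b}) \simeq \CC[t]/(t^{m+1})$ with $t$ in degree $2$, and (iii) the Serre functor of $\D_b$ is the shift $[2m]$. Condition (i) is essentially tautological within the formalism of subsection 3.2: the relative structure sheaf $\OO_{\X}$ is a smooth deformation of $\OO_X \in \T$, hence lies in $\D$, and its derived restriction to $\X_b$ is $\OO_{\X_b}$.

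For (ii) I would invoke Hodge theory for the smooth proper family $\pi$: the Hodge numbers $h^{0,i}(\X_b)$ are locally constant on $B$, and the sheaves $R^i \pi_{*} \OO_{\X}$ are locally free near $b = 0$ with formation compatible with base change. Hence the graded dimensions of $H^{\bullet}(\OO_{\X_b})$ coincide with those of $H^{\bullet}(\OO_X)$. The cup product on Dolbeault cohomology varies holomorphically in $b$, so the iterated products $t^k : (H^2)^{\otimes k} \to H^{2k}$ form a holomorphic family of maps between (eventually) one-dimensional vector spaces. At $b = 0$ they are non-zero for $1 \leq k \leq m$, and non-vanishing is an open condition; after shrinking $B$, the algebra structure on $H^{\bullet}(\OO_{\X_b})$ is therefore forced to be $\CC[t]/(t^{m+1})$.

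The main obstacle is (iii). I would argue via the relative Serre functor: since $\pi$ is smooth of relative dimension $n$, $\DB(\X)$ carries a relative Serre functor $\mathrm{S}_{\X/B} = (\,\cdot\, \otimes \omega_{\X/B})[n]$ whose restriction to each fibre is the absolute Serre functor of $\DB(\X_b)$. Smoothness of the deformation $\D \subset \DB(\X)$ should guarantee, via Grothendieck--Verdier duality for admissible subcategories, a relative Serre functor $\mathrm{S}_{\D/B}$ on $\D$ whose fibrewise restriction is $\mathrm{S}_{\D_b}$. The isomorphism $\mathrm{S}_{\T} \simeq [2m]$ at $b = 0$ lifts, after possibly shrinking $B$, to a relative natural transformation $\sigma : [2m] \to \mathrm{S}_{\D/B}$, because the sheaf of relative natural transformations is coherent and base-change compatible. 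Being an isomorphism, equivalently having vanishing cone, is an open condition on $B$, so $\mathrm{S}_{\D_b} \simeq [2m]$ on a neighborhood $U$ of $0$. Intersecting $U$ with the open loci produced in (i) and (ii) yields the desired neighborhood.
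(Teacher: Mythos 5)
Your overall strategy --- verify the Calabi--Yau property, membership of the structure sheaf, and the homological unit separately, and show that each survives on a neighborhood of $0 \in B$ --- is exactly the strategy of the paper, and your steps (ii) and (iii) are in substance the paper's own arguments. In particular, your step (iii) is the paper's argument written in functorial language: the paper encodes the Calabi--Yau property of $\T$ as a quasi-isomorphism of Fourier--Mukai kernels $\theta_0 : \EE_0 \rightarrow \EE_0 \otimes q_0^* \omega_{\X_0}[\dim X_0 - 2m]$, lifts it over a neighborhood by a Nakayama/coherence argument, and uses that being a quasi-isomorphism is open; your ``relative Serre functor plus coherent, base-change compatible sheaf of relative natural transformations'' is precisely this once one implements natural transformations as morphisms of kernels, and it carries the same (unproved but standard) base-change assumptions as the paper's Nakayama step.

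The genuine gap is your step (i). It is not tautological that $\OO_{\X} \in \D$, and the inference ``$\OO_{\X}$ is a smooth deformation of $\OO_X \in \T$, hence lies in $\D$'' is invalid: $\D$ is merely some $B$-linear admissible subcategory of $\DB(\X)$ whose projection kernel restricts at $b=0$ to that of $\T$; nothing in this definition tells you which objects of $\DB(\X)$ it contains, and membership of $\LL i_0^* \G$ in $\D_0$ does not imply membership of $\G$ in $\D$. Indeed, over all of $B$ the claim is false in general --- which is the very reason the proposition can only produce a neighborhood $U$ rather than all of $B$. What is true, and what the paper isolates as Lemma \ref{defstructural}, is the fiberwise statement obtained by a semicontinuity argument: using Corollary \ref{kernel}, the hypothesis $\OO_X \in \D_0$ translates into the vanishing $\RR {p_0}_*({}^{\perp}\EE_0) = 0$ of the projection of $\OO_{\X_0}$ onto the orthogonal complement; since this vanishing is an open condition on $b$ (equivalently, the support of $\RR p_*(\LL q^* \OO_{\X} \otimes {}^{\perp}\EE)$ is closed, proper over $B$, and disjoint from $\X_0$), one gets $\RR {p_b}_*({}^{\perp}\EE_b) = 0$, i.e. $\OO_{\X_b} \in \D_b$, for all $b$ in some open $U \ni 0$. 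Once this lemma is supplied in place of your ``tautological'' claim, your steps (ii) and (iii) --- together with the routine observation that each $\D_b$ is smooth, compact and regular, being admissible in $\DB(\X_b)$ with $\X_b$ smooth projective --- complete the proof along the same lines as the paper.
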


On the other hand, as far as ``long-time deformations" are concerned, one can show that a limit of (commutative) compact hyper-K\"ahler manifolds is again compact hyper-K\"ahler, provided it is projective. This result is not obvious and requires holonomy techniques to be proved. One would like to know if this holds true in the non-commutative world. I believe that such a deformation result would be important for the theory of compact hyper-K\"ahler categories and that it can not be proved without the design of new (probably powerful) geometrico-categorical techniques.

\begin{conj}[see Conjecture \ref{def-HK}]
Let $X$ be a smooth projective variety, let $\T \subset \DB(X)$ be a full admissible subcategory and $B$ a smooth algebraic variety. Let $\D$ be a deformation of $\T$ over $B$. Assume that $\D_b$ is compact hyper-K\"ahler for all $b \neq 0$. Then, the category $\D_0 = \T$ is compact hyper-K\"ahler.
\end{conj}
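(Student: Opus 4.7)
The plan is to check the two defining axioms of a compact hyper-K\"ahler category at $b = 0$: namely that the Serre functor of $\D_0 = \T$ is the shift $[2m]$, and that $H^{\bullet}(\OO_{\X_0}) \simeq \mathbb{C}[t]/(t^{m+1})$ with $t$ of degree $2$. I would tackle these two axioms in turn, with the second one carrying essentially all of the difficulty.

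The Serre functor axiom should be the softer of the two. Relative Serre functors exist for admissible subcategories of $\DB(\X)$ over a smooth base $B$ (via a relative dualizing complex and the Bondal--Kapranov formalism), and they specialize fiberwise. Since $S_{\D_b} \simeq [2m]$ for all $b \neq 0$, a rigidity/specialization argument at the origin should force $S_{\D_0} \simeq [2m]$. For the graded algebra $H^{\bullet}(\OO_{\X_0})$, the natural first move is to bound the individual dimensions $h^k := \dim \Hom_{\D_0}(\OO_{\X_0}, \OO_{\X_0}[k])$. Upper semi-continuity of $\Hom$ in proper families gives $h^k \geq h^k(\OO_{\X_b})$ for $b$ near $0$, whose right-hand side equals $1$ for $k$ even with $0 \leq k \leq 2m$ and $0$ otherwise. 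Having established $S_{\D_0} = [2m]$, Serre duality enforces $h^k = h^{2m-k}$; combined with the invariance of Hochschild homology proved in Section 3.2 --- which in particular pins down $\chi(\OO_{\X_0}) = m+1$ --- and, if available, Kaledin's noncommutative Hodge-to-de Rham degeneration, one hopes to constrain any ``extra'' cohomology that could appear in the limit.

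The main obstacle is the final step: even granting the correct dimensions $h^k$, one must still verify that the Yoneda product makes $H^{\bullet}(\OO_{\X_0})$ generated by a single class in degree $2$, i.e., rule out any exotic graded algebra structure at the central fibre. In the commutative setting, this is precisely where the Beauville--Bogomolov decomposition theorem enters: holonomy of a Ricci-flat K\"ahler metric precludes torus or genuine Calabi--Yau factors in a projective K\"ahler limit of hyper-K\"ahler manifolds. No equally robust categorical analogue is currently available. A plausible avenue would be to deform the action of the Looijenga--Lunts--Verbitsky Lie algebra on Hochschild cohomology in families and invoke its rigidity at the special fibre, but formulating this precisely is in essence the ``new powerful geometrico-categorical technique'' the author identifies as missing, and I expect this generation-in-degree-$2$ step to be the real crux of the conjecture.
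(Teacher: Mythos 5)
There is no proof to compare against: the statement you were given is Conjecture \ref{def-HK}, which the paper explicitly leaves open, stating that its resolution ``would certainly require the design of interesting new categorical techniques.'' The paper only proves two partial results in its direction: Proposition \ref{smalldef} (the \emph{openness} of the hyper-K\"ahler condition, i.e.\ small deformations of a hyper-K\"ahler category remain hyper-K\"ahler) and Proposition \ref{def-hk4} (the conjecture in dimension $4$, under the additional hypotheses that $\T$ is already known to be Calabi--Yau of dimension $4$ and that $\OO_X \in \T$). Your proposal is likewise not a proof --- you concede at the end that the generation-in-degree-$2$ step is out of reach --- so in that sense your assessment of where the difficulty lies is honest and broadly consistent with the paper's.

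However, you substantially underestimate the first step. You call the Serre functor axiom ``the softer of the two'' and suggest that a ``rigidity/specialization argument at the origin should force $S_{\D_0} \simeq [2m]$.'' The paper isolates exactly this step as a separate open problem, Conjecture \ref{def-CY}, and argues (citing Keller's work on deformations of Calabi--Yau algebras) that it is ``very unlikely that this conjecture can be proved by abstract algebraic arguments.'' The structural reason your sketch fails here is directional: all the semicontinuity and Nakayama-type mechanisms available in this setting (as used in Lemma \ref{defstructural} and Proposition \ref{smalldef}) propagate information \emph{from} the special fibre \emph{to} nearby fibres --- they prove open conditions. The conjecture requires the reverse implication, a \emph{closedness} statement, and no specialization mechanism for relative Serre functors from $b \neq 0$ to $b = 0$ is known; this is precisely why the commutative proof resorts to holonomy. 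Your second step, for what it is worth, essentially reconstructs the paper's proof of Proposition \ref{def-hk4}: invariance of Hodge numbers in smooth projective families pins down $H^{\bullet}(\OO_{\X_0})$ as a graded vector space, and then nondegeneracy of the Serre-duality pairing $H^2(\OO_{\X_0}) \times H^2(\OO_{\X_0}) \to H^4(\OO_{\X_0})$ forces generation in degree $2$ --- but this argument is special to $m = 2$, where $\dim H^2(\OO_{\X_0}) = 1$ leaves no room for an exotic algebra structure, as the paper itself notes when remarking that the pairing ``does not have so strong consequences in higher dimensions.''
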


The first step toward such a result should establish that a deformation of a Calabi-Yau category is again a Calabi-Yau category. This already happens to be not obvious. Restricting to deformations of Calabi-Yau categories, one can demonstrate the following (see Proposition \ref{def-hk4}):

\begin{prop}
Let $X$ be a smooth projective variety, let $\T \subset \DB(X)$ be a full admissible subcategory which is Calabi-Yau of dimension $4$ and let $B$ a smooth algebraic variety. Let $\D$ be a smooth deformation of $\T$ over $B$ with respect to $\pi : \X \rightarrow B$. Assume that $\OO_{X} \in \T$ and that for all $b \neq 0$, the category $\D_b$ is compact hyper-K\"ahler of dimension $4$ (with respect to its embedding in $\DB(\X_b)$). Then, the category $\D_0 = \T$ is compact hyper-K\"ahler of dimension $4$ (with respect to its embedding in $\DB(\X_0)$).
\end{prop}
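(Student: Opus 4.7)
The plan is to verify the two defining conditions of a compact hyper-K\"ahler category at $\D_0 = \T$: that the Serre functor is the shift $[4]$, and that $H^{\bullet}(\OO_X) \simeq \mathbb{C}[t]/t^{3}$ as a graded algebra. The former is granted by the Calabi-Yau hypothesis on $\T$, so all the work lies in establishing the dimensions and ring structure of $H^{\bullet}(\OO_X)$.

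First I would apply Grauert-type semi-continuity and constancy of Euler characteristic to the flat family $\OO_{\X}/B$. Since $\D_b$ is full admissible in $\DB(\X_b)$, one has $\dim \mathrm{Ext}^{i}_{\D_b}(\OO_{\X_b}, \OO_{\X_b}) = h^{i}(\X_b, \OO_{\X_b})$, and for $b \neq 0$ the hyper-K\"ahler hypothesis pins this dimension vector to $(1,0,1,0,1)$, so $\chi(\OO_{\X_b}) = 3$. Since $\pi$ is smooth proper over a connected base with connected generic fibers, $\X_0$ is itself connected, hence $h^{0}(\X_0, \OO_{\X_0}) = 1$, and by the Calabi-Yau duality on $\T$ also $h^{4}(\X_0, \OO_{\X_0}) = 1$. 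Combined with $\chi(\OO_{\X_0}) = 3$ this yields the single relation $h^{2}(\X_0, \OO_{\X_0}) = 1 + 2\, h^{1}(\X_0, \OO_{\X_0})$.

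The central difficulty is then the vanishing $h^{1}(\X_0, \OO_{\X_0}) = 0$: upper semi-continuity alone only gives $h^{1} \geq 0$, so one must exclude a simultaneous jump $(h^{1}, h^{2}) = (k, 1+2k)$ at $b=0$. For this I would invoke the invariance of Hochschild homology dimensions under smooth deformations proved in subsection $3.2$: the odd pieces $H^{q}(\X_b, \Omega^{p}_{\X_b})$ with $q - p$ odd assemble into $\HH_{\mathrm{odd}}(\D_b)$, whose dimensions stay constant along $B$. Combined with the Calabi-Yau symmetry $\HH^{i}(\T) \simeq \HH_{4-i}(\T)$, a jump in $h^{1}(\OO)$ at $b=0$ would force a compensating drop in another odd Hochschild piece, which one rules out using the explicit description of $\HH_{\bullet}(\D_b)$ forced by $H^{\bullet}(\OO_{\X_b}) \simeq \mathbb{C}[t]/t^{3}$ and Calabi-Yau duality at a nearby generic point. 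Making this compensation argument watertight, and in particular isolating the $(0,1)$-summand among the various Hodge-type contributions to $\HH_{\mathrm{odd}}(\D_b)$, is the main obstacle.

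Once $h^{\bullet}(\OO_{\X_0}) = (1,0,1,0,1)$ is established, the ring structure is routine: cohomology and base change makes $R^{2}\pi_{*}\OO_{\X}$ and $R^{4}\pi_{*}\OO_{\X}$ locally free line bundles in a neighbourhood of $0$, and the cup product $R^{2}\pi_{*}\OO_{\X} \otimes R^{2}\pi_{*}\OO_{\X} \to R^{4}\pi_{*}\OO_{\X}$ is a morphism of line bundles which is an isomorphism on $B \setminus \{0\}$ by the hyper-K\"ahler hypothesis; hence it is an isomorphism at $0$ as well, yielding $H^{\bullet}(\OO_X) \simeq \mathbb{C}[t]/t^{3}$ as graded algebras.
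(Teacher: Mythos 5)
Your overall skeleton---identify $H^{\bullet}(\OO_{\X_b})$ with $\mathbb{C}[t]/(t^{3})$ at nearby fibers, transfer the dimension vector to $b=0$, then recover the ring structure---matches the paper's, but the step you yourself flag as ``the main obstacle'' is a genuine gap, and it is exactly where the paper does something different and much simpler. To rule out a jump $(h^{1},h^{2})=(k,1+2k)$ at $b=0$ you propose to use the invariance of Hochschild homology of $\D_b$ (Theorem \ref{semiconti}) together with Calabi-Yau duality; but $\HH_{\bullet}(\D_b)$ carries no natural Hodge bigrading, so there is no way to ``isolate the $(0,1)$-summand'' inside the odd Hochschild homology of an admissible subcategory---it only sees sums of Hodge-type contributions, and a compensating jump elsewhere cannot be excluded by these means. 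The paper avoids all of this: since $\pi:\X\to B$ is a smooth projective family over a connected base, the \emph{classical} deformation invariance of Hodge numbers (Betti numbers are constant by Ehresmann, each $h^{p,q}$ is upper semi-continuous, and they sum to the constant Betti numbers, hence each is constant) gives $h^{0,q}(\X_0)=h^{0,q}(\X_b)$ for every $q$ directly, making the Euler-characteristic bookkeeping and the compensation argument unnecessary. Note also that before any of this you need $\OO_{\X_b}\in\D_b$ for $b\neq 0$ near $0$ (the hypotheses only give $\OO_{X}\in\D_0$); this is Lemma \ref{defstructural} in the paper, and without it your identification $\dim\mathrm{Ext}^{i}_{\D_b}(\OO_{\X_b},\OO_{\X_b})=h^{i}(\X_b,\OO_{\X_b})$ and the appeal to the homological unit of $\D_b$ have no content.

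Your final step also fails in an important case. A morphism of line bundles on $B$ which is an isomorphism on $B\setminus\{0\}$ need not be an isomorphism at $0$ when $\dim B=1$: the point $\{0\}$ is then a divisor, and the section defining the map may vanish exactly there (multiplication by $t$ on $\OO_{\mathbb{A}^{1}}$ is an isomorphism away from the origin). Since deformations over a curve are the typical situation, this is not a corner case. The paper instead argues intrinsically at $b=0$: because $\D_0$ is Calabi-Yau of dimension $4$ and $\mathrm{Hom}_{\D_0}(\OO_{\X_0},\OO_{\X_0})=\mathbb{C}$, the Yoneda pairing $H^{2}(\OO_{\X_0})\times H^{2}(\OO_{\X_0})\to H^{4}(\OO_{\X_0})\simeq\mathbb{C}$ coincides with the Serre-duality pairing of $\D_0$, hence is non-degenerate; as $h^{2}=1$ this forces $t^{2}\neq 0$ and yields $H^{\bullet}(\OO_{\X_0})\simeq\mathbb{C}[t]/(t^{3})$ as graded algebras. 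You should replace your limiting argument by this one---you already invoked the same duality to get $h^{4}=1$, so the ingredient is at hand.
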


It would certainly be desirable to know if this result can be generalized to higher-dimensional cases. It would also be very interesting to discover which structural results known for the Hochschild cohomology rings of compact hyper-K\"ahler manifolds are still valid in the categorical context. For instance, if $X$ is a compact hyper-K\"ahler manifold of dimension $2m$ and $\HH^{\langle 2 \rangle}(X)$ is the sub-algebra of $\HH^*(X)$ generated by $\HH^2(X)$, Verbitsky \cite{verbi2} proved the following isomorphism:
\begin{equation*}
\HH^{\langle 2 \rangle}(X) \simeq S^*\HH^{2}(X)/\{a^{m+1}, \, \text{such that} \, q(a) = 0 \},
\end{equation*}
where $q$ is the Beauville-Bogomolov quadratic form. This result is heavily used in \cite{Huy-Nieper} to prove the derived invariance of the compact hyper-K\"ahler property for projective varieties. In my opinion, it would be fascinating to have a similar statement for the Hochschild cohomology of a compact hyper-K\"ahler category.

\bigskip

The last two sections of the paper are dedicated to the study of specific examples. In section $4$, we focus on a question that was asked to me by Misha Verbitesky. 
\begin{quest}
Let $S$ be a K3 surface, for which $\GG \subset \mathcal{S}_n$ does the quotient $S \times \cdots \times S/ \GG$ admits a non-commutative crepant resolution which is hyper-Kähler?
\end{quest}

 The answer to this question in the commutative world is due to Verbitski himself and he proves that only for $\GG = \mathcal{S}_n$ such a resolution exists (and the resolution is the Hilbert scheme of $n$ points on $S$). In the non-commutative world, there are quite a few more examples and we have the:

\begin{theo} 
Let $S$ be a K3 surface and $n \geq 1$. Let $\GG \subset \mathcal{S}_n$ acting on $S \times \cdots \times S$ by permutations. The quotient $S \times \cdots \times S/ \GG $ admits a categorical crepant resolution which is a hyper-Kähler category if and only if $\GG$ is one of the following:

\begin{itemize}
\item $\GG = \mathcal{S}_n$,

\item $\GG = \mathcal{A}_n$  (the alternating group),

\item $n=5$ and $\GG = \mathbb{F}_5^*$,

\item $n= 6$ and $\GG = \mathbb{P}GL_2(\mathbb{F}_5)$, 

\item $n=9$ and $\GG = \mathbb{P}GL_2(\mathbb{F}_8)$,

\item $n=9$ and $\GG = \mathbb{P}GL_2(\mathbb{F}_8) \rtimes \mathrm{Gal}(\mathbb{F}_8/\mathbb{F}_2)$.

\end{itemize}
Furthermore, this categorical resolution is always non-commutative in the sense of Van-den-Bergh.
\end{theo}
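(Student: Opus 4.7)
The plan is to combine Theorem \ref{hk-crepant} with the construction of categorical strongly crepant resolutions for symplectic finite quotients mentioned in Corollary \ref{finite}. Because $\GG \subset \mathcal{S}_n$ permutes the factors it preserves the symplectic form $\sum_{i}\pi_i^{\ast}\omega_S$, so $S^n/\GG$ is automatically Gorenstein symplectic with rational singularities, and a categorical strongly crepant resolution is provided locally by the skew group algebra $\OO_{S^n}\rtimes \GG$. By Theorem \ref{hk-crepant} this resolution is hyper-K\"ahler precisely when $H^{\bullet}(\OO_{S^n/\GG})\simeq \mathbb{C}[t]/(t^{n+1})$. Since $\GG$ is finite, $H^{\bullet}(\OO_{S^n/\GG})=H^{\bullet}(\OO_{S^n})^{\GG}$, and K\"unneth together with $H^{\bullet}(\OO_S)=\mathbb{C}\oplus \mathbb{C}\eta$ with $|\eta|=2$ identifies $H^{2k}(\OO_{S^n})$ with $\bigoplus_{|I|=k}\mathbb{C}\cdot\eta_{I}$. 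Since $\eta$ has \emph{even} degree the Koszul signs in the tensor swap vanish, so $\GG$ acts on this basis by the unsigned permutation representation on $k$-subsets.

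The next step is to translate the algebra condition into combinatorics. From the identification above,
\begin{equation*}
\dim H^{2k}(\OO_{S^n})^{\GG}\;=\;\#\{\GG\text{-orbits on }k\text{-subsets of }\{1,\ldots,n\}\}.
\end{equation*}
Requiring this dimension to equal $1$ for every $0\leq k\leq n$ is exactly the demand that $\GG$ be $k$-homogeneous for every $k$, and once these dimensions are all $1$ the graded algebra structure is forced: the $\mathcal{S}_n$-invariant $e_1=\eta_1+\cdots+\eta_n$ is a nonzero $\GG$-invariant in degree $2$, and $e_1^{k}=k!\,e_k$ spans the degree-$2k$ invariants for $k\leq n$ while $e_1^{n+1}=0$. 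The problem thus reduces to classifying subgroups $\GG\subseteq \mathcal{S}_n$ that are $k$-homogeneous for every $1\leq k\leq n$; by the Livingstone-Wagner theorem together with the complementation $k\leftrightarrow n-k$ this is equivalent to being $\lfloor n/2\rfloor$-homogeneous. Invoking Kantor's classification of $k$-homogeneous but not $k$-transitive groups together with the (CFSG-based) classification of $2$- and $3$-transitive groups, one extracts exactly the list of the statement: $\mathcal{S}_n$ and $\mathcal{A}_n$ for every $n$, and the sporadic examples $\mathrm{AGL}_1(\mathbb{F}_5)=\mathbb{F}_5\rtimes\mathbb{F}_5^{\ast}$ for $n=5$, $\mathrm{PGL}_2(\mathbb{F}_5)$ for $n=6$, and $\mathrm{PGL}_2(\mathbb{F}_8)$ together with $\mathrm{PGL}_2(\mathbb{F}_8)\rtimes\mathrm{Gal}(\mathbb{F}_8/\mathbb{F}_2)$ for $n=9$.

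For the final assertion the strategy is to appeal directly to the Van den Bergh perspective: the resolution is given by the sheaf of skew group algebras $\OO_{S^n}\rtimes \GG$ on $S^n/\GG$, which is manifestly non-commutative as an $\OO_{S^n/\GG}$-algebra because $\GG$ is non-trivial and acts with fixed points. The hard part of the plan is undoubtedly the group-theoretic classification: although the Livingstone-Wagner reduction is elementary, completing the enumeration relies on Kantor's work and ultimately on CFSG. A subtler secondary point is the precise meaning of \emph{non-commutative in the sense of Van den Bergh} in the case $\GG=\mathcal{S}_n$, where Bridgeland-King-Reid gives an equivalence $\DB^{\mathcal{S}_n}(S^n)\simeq \DB(S^{[n]})$; one must carefully distinguish the non-commutativity of the algebra producing the resolution from the possible geometricity of its derived module category.
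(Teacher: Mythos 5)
Your proposal is correct and follows essentially the same route as the paper: the strongly crepant resolution is $\DB(Coh^{\GG}(S\times\cdots\times S))$ provided by Theorem \ref{quotienttheo}, its homological unit is the $\GG$-invariant part of the K\"unneth algebra $H^{\bullet}(\OO_S)^{\otimes n}$, the hyper-K\"ahler condition is translated into transitivity of $\GG$ on $k$-subsets of $\{1,\dots,n\}$ for all $k$ (set-transitivity), the classification of set-transitive groups gives the list (the paper cites Beaumont--Peterson and Livingstone--Wagner where you invoke Livingstone--Wagner plus Kantor, which amounts to the same classification), and non-commutativity comes from realizing the resolution as modules over a sheaf of algebras on the quotient. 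The only discrepancy is notational: for $n=5$ the paper writes $\mathbb{F}_5^*$, whereas your $\mathrm{AGL}_1(\mathbb{F}_5)=\mathbb{F}_5\rtimes\mathbb{F}_5^*$ is the precise name of the degree-$5$ set-transitive group in that classification.
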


The study of the Betti cohomology ring of Hilbert schemes of points on a K3 surface is a classical subject where hyper-Kähler geometry, number theory and representation theory interact fruitfully with one another. We expect that the study of the Hochschild cohomology ring of the above hyper-Kähler categories should reveal interesting new connections between these three topics.

\bigskip

In the final section of this paper, we describe in some details a categorical strongly crepant resolution of the \textit{relative compactified Prymian} constructed by Markushevich and Tikhomirov in \cite{Marku-Tikho}. It is known that this fourfold is a singular irreducible symplectic variety of dimension $4$ which has no crepant resolution of singularities. Our main result in section $5$ is the:

\begin{theo} 

The relative compactified Prymian of Markushevich and Tikhomirov admits a categorical strongly crepant resolution which is a hyper-Kähler category of dimension $4$. The Hochschild cohomology numbers of this category are:

\begin{itemize}
\item $\mathrm{hh}^{0} = \mathrm{hh}^8 = 1$,
\item $\mathrm{hh}^{2} = \mathrm{hh}^6 = 16$,
\item $\mathrm{hh}^{4} = 206$.
\end{itemize}
\end{theo}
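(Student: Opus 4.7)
The plan is to combine the explicit geometry of Markushevich and Tikhomirov's fourfold $P$ with Theorem \ref{hk-crepant} and with Kuznetsov's additivity of Hochschild cohomology under semiorthogonal decompositions. First, we recall from \cite{Marku-Tikho} the construction of $P$ as a relative compactified Prymian attached to a linear system of curves on a K3 surface with antisymplectic involution, and carefully identify its singular locus. The singularities of $P$ are symplectic, hence Gorenstein with $\omega_P \simeq \OO_P$ and rational. The cohomological hypothesis $H^{\bullet}(\OO_P) \simeq \mathbb{C}[t]/(t^3)$ is then verified by passing to a projective resolution $\tilde{P} \to P$ and combining rationality of $P$ with the existence of the holomorphic symplectic form inherited from $P$; the appendix provides the explicit topological data on $\tilde{P}$ needed at this step.

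Next, the categorical strongly crepant resolution is built in the style of Kuznetsov's theory. After a suitable partial resolution $\tilde{P} \to P$, the singular locus is described by a smooth surface $\Sigma$ of transverse $A_1$ singularities, possibly together with finitely many deeper strata originating from the ramification of the Prym fibration. On $\tilde{P}$ one produces a sheaf $\A$ of non-commutative algebras whose étale-local model on each transverse slice is the skew group algebra $\mathbb{C}[x,y] \# \mathbb{Z}/2$. The pair $(\tilde{P}, \A)$ then carries a natural semiorthogonal decomposition $\DB(\tilde{P}, \A) = \langle \DB(\tilde{P}), \DB(\Sigma), \ldots \rangle$ whose extra components encode the exceptional loci. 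That this provides a \emph{strongly} crepant — and not merely crepant — categorical resolution follows from the well-known fact that $\mathbb{C}[x,y] \# \mathbb{Z}/2$ is a strongly crepant non-commutative resolution of $\mathbb{C}^2/(\pm 1)$. With this in hand, Theorem \ref{hk-crepant} immediately implies that $\D := \DB(\tilde{P}, \A)$ is a compact hyper-Kähler category of dimension $4$.

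The Hochschild cohomology numbers are then computed via Kuznetsov's additivity $HH^{\bullet}(\D) \simeq HH^{\bullet}(\tilde{P}) \oplus \bigoplus_i HH^{\bullet}(\text{component}_i)$ combined with the Hochschild--Kostant--Rosenberg decomposition applied to each smooth geometric piece. A careful tabulation of the Hodge numbers of $\tilde{P}$ and of each exceptional component — available from \cite{Marku-Tikho} and the appendix — produces the claimed values $\mathrm{hh}^0 = \mathrm{hh}^8 = 1$, $\mathrm{hh}^2 = \mathrm{hh}^6 = 16$ and $\mathrm{hh}^4 = 206$. The main obstacle is the global construction of $\A$: the Prym fibration has a non-trivial ramification structure, and gluing the local $\mathbb{C}[x,y] \# \mathbb{Z}/2$-models into a genuine sheaf of maximal orders on $\tilde{P}$ requires real geometric input. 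A secondary but non-trivial issue is the Hodge-number bookkeeping, since the target values $16$ and $206$ do not match naïve expectations coming from $S^{[2]}$-type geometry and one must account carefully for the contribution of each exceptional stratum appearing in the semiorthogonal decomposition.
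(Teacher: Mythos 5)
Your proposal goes wrong at its geometric foundation. The singular locus of the Markushevich--Tikhomirov Prymian $\mathcal{P}^{2m}$ is \emph{not} a surface of transverse $A_1$ singularities: it consists of $28$ isolated points, each locally analytically equivalent to $\mathbb{C}^4/\{\pm 1\}$ (the cone over $v_2(\mathbb{P}^3)$). Consequently there is no surface $\Sigma$, no sheaf of algebras with \'etale-local model $\mathbb{C}[x,y]\#\mathbb{Z}/2$ (the relevant local model would be four-dimensional), and no semiorthogonal component $\DB(\Sigma)$; the entire construction of your category $\D=\DB(\tilde{P},\A)$ collapses. The paper's construction goes in the opposite direction: it blows up the $28$ points to get $\widetilde{\mathcal{P}^{2m}}$ with exceptional divisors $E_1,\dots,E_{28}$, and invokes Example $7.1$ of \cite{kuz1} to produce a semiorthogonal decomposition
\begin{equation*}
\DB(\widetilde{\mathcal{P}^{2m}}) = \left\langle \A_{\mathcal{P}^{2m}}, \OO_{E_1}, \OO_{E_1}(1),\cdots, \OO_{E_{28}}, \OO_{E_{28}}(1) \right\rangle,
\end{equation*}
so that the strongly crepant resolution $\A_{\mathcal{P}^{2m}}$ is an admissible \emph{subcategory} of $\DB(\widetilde{\mathcal{P}^{2m}})$ obtained by deleting $56$ exceptional objects, rather than an enlargement of $\DB(\tilde{P})$ by extra components. (Your appeal to Theorem \ref{hk-crepant} for the hyper-K\"ahler property, using $\omega_{\mathcal{P}^{2m}}\simeq\OO_{\mathcal{P}^{2m}}$ and $H^{\bullet}(\OO_{\mathcal{P}^{2m}})\simeq\mathbb{C}[t]/(t^3)$, is the same step as in the paper and is fine.)

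The second genuine error is your computational tool: Hochschild \emph{cohomology} is not additive under semiorthogonal decompositions --- only Hochschild \emph{homology} is (Corollary $7.5$ of \cite{kuz3}). Already for $\DB(\mathbb{P}^1)=\langle \OO,\OO(1)\rangle$ the cohomologies of the two point-like components sum to $\mathbb{C}^2$ while $\mathrm{HH}^{\bullet}(\mathbb{P}^1)$ has dimension $4$, so the identity $\mathrm{HH}^{\bullet}(\D)\simeq \mathrm{HH}^{\bullet}(\tilde{P})\oplus\bigoplus_i \mathrm{HH}^{\bullet}(\text{component}_i)$ you rely on is false. The paper's route is: (i) Menet's Hodge numbers of $\widetilde{\mathcal{P}^{2m}}$ ($h^{1,1}=42$, $h^{3,1}=14$, $h^{2,2}=176$) together with Hochschild--Kostant--Rosenberg give the Hochschild \emph{homology} of $\DB(\widetilde{\mathcal{P}^{2m}})$, namely $\mathrm{hh}_0=262$, $\mathrm{hh}_{\pm2}=16$, $\mathrm{hh}_{\pm4}=1$; (ii) additivity of homology strips off the $\mathbb{C}^{56}$ contributed by the $56$ exceptional objects, giving $\mathrm{hh}_0(\A_{\mathcal{P}^{2m}})=206$; (iii) only then, because $\A_{\mathcal{P}^{2m}}$ is Calabi--Yau of dimension $4$ (Serre functor $=[4]$), one converts homology into cohomology via $\mathrm{HH}_{\bullet}(\A_{\mathcal{P}^{2m}})\simeq \mathrm{HH}^{\bullet-4}(\A_{\mathcal{P}^{2m}})$. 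This last conversion is available only for the Calabi--Yau category itself, not for the non-Calabi--Yau pieces like $\DB(\tilde{P})$ appearing in your decomposition, so your bookkeeping cannot be repaired component by component: you must work in homology throughout and use the Calabi--Yau property once, at the very end.
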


This result is based in an essential way on the computations of the Hodge numbers of some resolution of singularities of the Prymian. This is done in the appendix by Grégoire Menet. Using the deformation theory developed in section $3$, we prove the:

\begin{prop}
A small deformation of the categorical strongly crepant resolution of the relative compactified Prymian of Markushevich-Tikhomirov can never be equivalent to the derived category of a projective variety (and it thus provides a counter-example to conjecture $5.8$ in \cite{kuzCY}). 
\end{prop}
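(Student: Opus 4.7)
\bigskip\noindent\textbf{Proof plan.} The approach is to reduce the statement to a numerical obstruction on the second Betti number of a hypothetical compact hyper-Kähler fourfold. Suppose for contradiction that a small deformation $\D_b$ of $\T$ is equivalent to $\DB(X)$ for some smooth projective variety $X$. By the small-deformation proposition of this introduction, $\D_b$ is compact hyper-Kähler of dimension $4$ with respect to its embedding in $\DB(\X_b)$. Transporting this property along the equivalence $\D_b \simeq \DB(X)$, and using the fact recalled in the excerpt that $\DB(X)$ is compact hyper-Kähler precisely when $X$ is a hyper-Kähler manifold, one concludes that $X$ itself must be a compact hyper-Kähler fourfold.

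The next step is to transfer the numerical invariants of $\T$ to those of $X$. Invoking the invariance of Hochschild homology under smooth deformation developed in subsection $3.2$, together with the Calabi-Yau duality $\HH^n \simeq \HH_{4-n}$ which holds uniformly in the $4$-Calabi-Yau categories $\T$, $\D_b$ and $\DB(X)$, one obtains $\mathrm{hh}^{\bullet}(X) = \mathrm{hh}^{\bullet}(\T)$; in particular $\mathrm{hh}^2(X) = 16$. I would then apply the Hochschild-Kostant-Rosenberg decomposition $\HH^2(X) \simeq H^0(X, \wedge^2 T_X) \oplus H^1(X, T_X) \oplus H^2(X, \OO_X)$ together with the symplectic identification $T_X \simeq \Omega^1_X$ on a hyper-Kähler manifold to rewrite the right-hand side as $H^{2,0}(X) \oplus H^{1,1}(X) \oplus H^{0,2}(X)$, which is $H^2(X, \mathbb{C})$ by the Hodge decomposition. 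Hence $b_2(X) = 16$. This contradicts a theorem of Guan stating that the second Betti number of a compact irreducible hyper-Kähler fourfold belongs to $\{3,4,5,6,7,8,23\}$.

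The main obstacle is the cohomological invariance in the second step: the result of subsection $3.2$ is phrased for Hochschild homology, so one needs a deformation-compatible version of the Calabi-Yau duality (i.e.\ a trivialisation of the Serre functor that deforms in families) in order to transport equality of numerical invariants from homology to cohomology. Once this point is granted the remainder is a short and purely numerical application of HKR together with Guan's classification.
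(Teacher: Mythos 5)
Your numerical endgame (Hochschild numbers, the symplectic HKR identification, Guan's bound on $b_2$) is exactly the paper's, but your first step conceals the real difficulty and, as written, is not a valid move. The property ``compact hyper-K\"ahler'' is defined in this paper \emph{relative to an embedding} $\T \subset \DB(\C)$, through the homological unit, and the paper states explicitly (in the introduction and after Theorem \ref{hk-crepant}) that invariance of this notion under equivalence is an open problem. So ``transporting this property along the equivalence $\D_b \simeq \DB(X)$'' is precisely the point at issue, not something you may invoke. What an equivalence does transport is only the intrinsic data: the Calabi--Yau-$4$ property (hence $\dim X = 4$ and $\omega_X \simeq \OO_X$) and the Hochschild homology numbers (Theorem \ref{semiconti} combined with Theorem \ref{Prymian}). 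But these alone do not make $X$ hyper-K\"ahler: in the paper's convention $\mathrm{hh}_2(X) = h^2(\OO_X) + h^3(X,\Omega^1_X) + h^4(X,\Omega^2_X) = 16$, which is perfectly compatible with, say, a strict Calabi--Yau fourfold ($h^2(\OO_X)=0$, $h^{1,3}=16$); for such an $X$ the identification of Hochschild numbers with Betti numbers fails (it needs the symplectic form $T_X \simeq \Omega^1_X$), and Guan's theorem gives no contradiction. Without first proving $H^{\bullet}(\OO_X) \simeq \mathbb{C}[t]/(t^3)$, your contradiction evaporates. Note also that the obstacle you flag yourself --- a families version of the duality $\mathrm{HH}^{\bullet} \simeq \mathrm{HH}_{\bullet-4}$ --- is not the real issue: one argues at the single fiber $b_0$, where $\D_{b_0} \simeq \DB(X)$ and $\omega_X \simeq \OO_X$, so the usual duality for a variety with trivial canonical bundle suffices.

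The missing ingredients are Lemma \ref{defpoints} and Theorem \ref{homounitiso}, and they are the reason those statements exist in the paper. Lemma \ref{defstructural} and Lemma \ref{defpoints} guarantee that, after shrinking $B$, one has $\OO_{\X_b} \in \D_b$ \emph{and} $\CC(x_b) \in \D_b$ for generic $x_b \in \X_b$. These are exactly the hypotheses of Theorem \ref{homounitiso}, applied to the fully faithful composite $\DB(X) \simeq \D_{b_0} \hookrightarrow \DB(\X_{b_0})$ between derived categories of fourfolds; its conclusion is the isomorphism of graded algebras $H^{\bullet}(\OO_X) \simeq H^{\bullet}(\OO_{\X_{b_0}}) \simeq \mathbb{C}[t]/(t^3)$. (This theorem is the technical heart --- its proof runs through Chern characters, Grothendieck--Riemann--Roch and the Hodge--Riemann relations --- and it is not a formal consequence of full faithfulness: the blow-up example after its statement shows both object-conditions are essential.) Proposition \ref{comtononcom} then upgrades $X$ to a genuine compact hyper-K\"ahler fourfold, after which your computation $b_2(X) = 16$ and the appeal to Guan go through verbatim. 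So the structure you need is: smallness of the deformation $\Rightarrow$ object conditions persist $\Rightarrow$ homological-unit comparison $\Rightarrow$ $X$ hyper-K\"ahler $\Rightarrow$ numerical contradiction; your proposal skips the middle two arrows.
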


We expect in fact that any deformation of this category is never equivalent to the derived category of a projective variety. If our expectation is correct, then the moduli space of hyper-Kähler categories of dimension $4$ (if such an object exists) contains a connected component which is \textbf{purely non-commutative}! 
\end{subsection}

\bigskip

\textbf{Acknowledgements}. I am very thankful to Chris Brav, Victor Ginzburg, Daniel Huybrechts, Sasha Kuznetsov, Richard Thomas and Matt Young for very interesting discussions about the various possible definitions of compact hyper-K\"ahler categories and the properties one would expect them to enjoy. I would also like to thank my former PhD advisor Laurent Manivel for many helpful comments on some preliminary version of this work. I am especially grateful to Gregoire Menet for supplying me with the Hodge numbers I was looking for and to Misha Verbitsky for asking the question studied in section $4$ of this paper.

\end{section}

\begin{section}{Categorical crepant resolution of singularities}

As mentioned in the introduction, our examples of compact hyper-K\"ahler categories are based on the theory of categorical crepant resolutions of singularities. This notion has been developed in \cite{kuz1} and was further explored in \cite{theseabuaf,quotient}. 

\begin{subsection}{Definition and motivations}

Let us recall that a \textit{crepant} resolution of a normal Gorenstein algebraic variety $Y$ is a resolutions of singularities $\pi : X \rightarrow Y$ such that $\pi^* \w_Y = \w_{X}$, where $\w_Y$ is the dualizing line bundle of $Y$. Crepant resolutions are often considered to be minimal resolutions of singularities (see the first part of \cite{theseabuaf} for an extended discussion about minimality for resolutions of singularities). Une fortunately crepant resolutions of singularities are quite rare. The following example is very classical:

\begin{exem} \label{exe}

Let $Y$ be a cone over $v_2(\mathbb{P}^3) \subset \mathbb{P}(S^2 \mathbb{C}^4)$. The variety $Y$ is analytically equivalent to $\mathbb{C}^6/\{1,-1\}$. Hence, it is locally analytically $\mathbb{Q}$-factorial (see \cite{mori-kollar}, Chapter 5), so that it has no small resolution of singularities. Furthermore the blow-up of $Y$ along the vertex gives a resolution of singularities where the coefficient of the exceptional divisor in the dualizing bundle formula is $1$ (this is an obvious computation). As a consequence, the variety $Y$ has terminal singularities. Since it admits no small resolution, we find that $Y$ has no crepant resolution of singularities.

\end{exem}

Given a singularity which does not admit any crepant resolution, one still would like to know if it is possible to produce minimal resolutions from the point of view of category theory. Kuznetsov's insight is that such categorical "minimal" resolutions should be constructed as \textit{categorical crepant resolution} (see \cite{kuz1}, section $4$).

\begin{defi}
Let $Y$ be an algebraic variety and $\X$ be a smooth Deligne-Mumford stack. We say that $\X$ \textbf{homologically dominates} $Y$, if there exists a proper morphism $p : \X \rightarrow Y$, such that $\RR p_* \OO_{\X} \simeq \OO_Y$.
\end{defi}

Typical examples of such phenomenon include resolutions of singularities for a variety with rational singularities and the canonical projection from a smooth Deligne-Mumford stack to its coarse moduli space.
\begin{defi} \label{categorical-resolution}
Let $Y$ be an algebraic variety with Gorenstein rational singularities. Let $p : \X \rightarrow Y$ be a smooth Deligne-Mumford stack which homologically dominates $Y$. A \textbf{categorical resolution} of $Y$ is a full admissible subcategory $\T \subset \DB(\X)$ such that $\LL p^* \DP(Y) \subset \T$.
\end{defi}

In \cite{kuz1}, the definition of categorical resolution was restricted to the case where $X$ is a variety. A way more general notion of categorical resolution has been defined and studied by Kuznetsov and Lunts in \cite{kuz2}. The main advantage of their definition is that one can prove the existence of a categorical resolution for any scheme (!) of finite typer over $\mathbb{C}$. With Definition \ref{categorical-resolution}, we lie in the middle. The possibility to work with Deligne-Mumford stacks allows to produce interesting examples of non-commutative resolution of singularities (see \cite{quotient}). On the other hand, many elementary techniques and results from \cite{kuz1} are still valid when $X$ is a smooth Deligne-Mumfod stack, with proofs being exactly the same.

\begin{defi}[Categorical crepancy, \cite{kuz1}]
Let $Y$ be a an algebraic variety with Gorenstein rational singularities and $p : \X \rightarrow Y$ be a Deligne-Mumford stack homologically dominating $Y$. Let $\delta : \T \hookrightarrow \DB(\X)$ be a categorical resolution of $Y$ and let ${p_{\T}}_* : \T \rightarrow \DB(Y)$ be the composition of $\RR p_*$ with $\delta$. 
\begin{itemize}

\item We say that ${p_{\T}}_* : \T \rightarrow \DB(Y)$ is a \textbf{weakly crepant resolution} of $Y$, if for all $\F \in \DP(Y)$, we have:

\begin{equation*}
p_{\T}^* \F \simeq p_{\T}^{!} \F,
\end{equation*}
where $p_{\T}^*$ and $p_{\T}^{!}$ are the left and right adjoint to ${p_{\T}}_*$.

\item We say that ${p_{\T}}_* : \T \rightarrow \DB(Y)$ is a \textbf{strongly crepant resolution} of $Y$ if the following two conditions hold:

\begin{enumerate}
\item we have $\LL p^* \F \otimes_{\OO_{\X}} \d \G \in \T$, for all $\F \in \DP(Y)$ and $\G \in \T$, 
\item the identity functor is a relative Serre functor for $\T$ with respect to the map ${p_{\T}}_* $.
\end{enumerate}
\end{itemize} 
\end{defi}

Let us make a few comments on this definition. The first requirement in the definition of a categorical strongly crepant resolution is that $\T$ has a module structure over $\DP(Y)$ (see \cite{kuz1}, section $3$). Assume that $\T$ is a categorical strongly crepant resolution of a projective variety $Y$. Then the (absolute) Serre functor of $\T$ is given by the tensor product by $\pi^* \omega_Y [\dim Y]$. Note also that a categorical strongly crepant resolution of a Gorenstein rational singularity is automatically a categorical weakly crepant resolution of this singularity, but the converse is not true (see \cite{kuz1}, section 8). However, in the purely geometric setting (that is when $\T \simeq \DB(X)$, for some algebraic variety $X$), all these notion coincide \cite{theseabuaf}. I refer to \cite{kuz1, kuz4, abuaf1, spvdb} for the existence of categorical crepant resolution of determinantal varieties.

\end{subsection}

\begin{subsection}{Categorical crepant resolutions and quotient singularities}

In this sub-section, I will recall the main result of \cite{quotient}. It will be useful to construct new compact hyper-K\"ahler categories starting from a compact hyper-K\"ahler variety endowed with a finite group of symplectic automorphisms.

\begin{theo}[\cite{quotient}] \label{quotienttheo}
Let $X$ be a quasi-projective variety with normal Gorenstein quotient singularities and let $\X$ be a smooth separated Deligne-Mumford stack whose coarse moduli space is $X$. Assume that the dualizing line bundle of $\X$ is the pull back of the dualizing line bundle on $X$, then $\DB(\X)$ is a strongly crepant resolution of $X$.

Furthermore, there exists a sheaf of algebras $\A$ on $X$ such that $\DB(\X) \simeq \DB(X, \A)$. Hence, the pair $(X, \A)$ is a non-commutative crepant resolution of $X$ in the sense of Van den Bergh.

\end{theo}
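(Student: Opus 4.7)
My plan is to verify the conditions for a strongly crepant resolution and then to construct the sheaf $\A$ via étale-local quotient presentations. The key reduction is that any smooth separated Deligne--Mumford stack whose coarse moduli space has quotient singularities admits an étale atlas of the form $[U/G] \to U/G$ with $U$ smooth affine and $G$ a finite group, and every local verification reduces to this model.

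First I would show that $\DB(\X)$ is a categorical resolution of $X$. The inclusion $\LL p^* \DP(X) \subset \DB(\X)$ is automatic from the smoothness of $\X$. For the homological domination $\RR p_* \OO_{\X} \simeq \OO_X$, working on an étale chart $[U/G] \to U/G$ one has $p_* \OO_{[U/G]} = (\pi_* \OO_U)^G = \OO_{U/G}$, and the higher direct images vanish since taking $G$-invariants is exact in characteristic zero; these local identifications glue into the global statement. The module structure condition in the definition of strong crepancy is then tautological, as $\T = \DB(\X)$ is the whole derived category of $\X$.

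Next I would verify that the identity is a relative Serre functor for $p_* : \DB(\X) \to \DB(X)$. By Grothendieck--Verdier duality applied to the proper morphism $p$ (of relative dimension zero), the functor $p^!$ is given by $\LL p^*(-) \otimes_{\OO_\X} \omega_{\X/X}$. The hypothesis $\omega_{\X} \simeq p^* \omega_X$ combined with the Gorenstein assumption on $X$ implies $\omega_{\X/X} \simeq \OO_{\X}$, so $p^! \simeq \LL p^*$; unwinding the definition then identifies the relative Serre functor with the identity, which is equivalent to saying that the absolute Serre functor of $\DB(\X)$ is given by tensoring with $p^*\omega_X[\dim X]$.

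For the construction of $\A$: on an étale chart $[U/G]$, the restriction of $\DB(\X)$ is the derived category of $G$-equivariant coherent sheaves on $U$, which is equivalent to $\DB(U/G,\, \A_{U/G})$ where $\A_{U/G} := \pi_*(\OO_U \rtimes G)$ is the pushforward of the skew group algebra. These local sheaves of algebras carry canonical étale descent data (from the functoriality of the construction) and glue to a global sheaf of $\OO_X$-algebras $\A$ with $\DB(\X) \simeq \DB(X, \A)$. Van den Bergh's conditions for a non-commutative crepant resolution are then met: $\A$ has finite global dimension from the smoothness of $\X$, and the maximal Cohen--Macaulay property of $\A$ as an $\OO_X$-module follows from the crepancy hypothesis, which forces the required self-duality. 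The main technical obstacles are the correct formulation of Grothendieck--Verdier duality for a proper morphism from a smooth Deligne--Mumford stack to a singular Gorenstein base, and the consistent étale gluing of the skew-group-algebra construction into a well-defined sheaf of algebras on $X$.
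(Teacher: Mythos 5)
A preliminary remark: the paper does not prove this statement at all. It is imported verbatim from \cite{quotient}, and the surrounding text only records what the content and the difficulty of that result are; so your proposal can only be measured against that commentary. Your first half is fine in outline: homological domination is \'etale-local on $X$ and follows on a chart $[U/G]\rightarrow U/G$ from exactness of $G$-invariants in characteristic zero; the module-structure condition in strong crepancy is tautological for $\T = \DB(\X)$; and the relative Serre functor condition reduces, via Grothendieck--Verdier duality for the proper, relative-dimension-zero map $p$, to the triviality of $\omega_{\X}\otimes p^*\omega_X^{-1}$, which is exactly the hypothesis.

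The genuine gap is in your construction of $\A$, and it sits at precisely the point that the paper singles out as the heart of the theorem: it says explicitly that there is a priori no reason for the local resolutions of \cite{vdb} to glue, and that the main point of \cite{quotient} is to exhibit a sheaf of algebras realizing such a gluing. Your claim that the local skew group algebras $\pi_*(\OO_U \rtimes G)$ carry ``canonical \'etale descent data (from the functoriality of the construction)'' is false: the skew group algebra depends on the chosen presentation $(U,G)$, not on the stack $[U/G]$ it presents. On an overlap, two presentations yield algebras that are canonically \emph{Morita equivalent} but in general not isomorphic; for instance, over the locus where all stabilizers are trivial, the tautological chart gives $\OO_X$ itself, while a free Galois chart $(U,G)$ gives a sheaf of matrix algebras of rank $|G|^2$. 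Descent for sheaves of algebras requires honest isomorphisms satisfying a cocycle condition, and replacing them by Morita equivalences introduces a nontrivial gerbe-type obstruction, so ``functoriality'' cannot close this gap. The way this is actually resolved --- and, per the paper's commentary, the content of \cite{quotient} --- is a global rigidification: one produces a single locally free sheaf $\EE$ on all of $\X$ whose fibres contain every irreducible representation of every stabilizer group (a genuinely global existence statement, using that $\X$ is a quotient stack with the resolution property, not an \'etale-local one), and sets $\A := p_*\Hh(\EE,\EE)$; then $\RR p_* \RR\Hh(\EE,-)$ induces the equivalence $\DB(\X)\simeq \DB(X,\A)$, and Van den Bergh's conditions are verified for this particular $\A$. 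Your checks of finite global dimension and of the maximal Cohen--Macaulay property, whatever their merits, are downstream of an algebra whose existence your argument never establishes.
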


Note that if $X$ is a normal quasi-projective variety with quotient singularities, there is always a smooth separated Deligne-Mumford stack associated to it as in the above statement (see Proposition $2.8$ of \cite{vistoli}). The non-trivial hypothesis (which can not be removed) is that the dualizing bundle of the Deligne-Mumford stack associated to $X$ is the pull back of the dualizing bundle on $X$. This amounts to check that on an \'etale atlas of $\X$, the line bundle $\omega_{\X}$ is equivariantly \footnote{for the isotropy groups of the fixed points of the \'etale atlas of $\X$} locally trivial. This finally boils down to checking that for any $x \in X$, there exists an \'etale neighborhood $U_x$ of $x \in X$, such that $U_x = V/ \GG$ where $V$ is a vector space and $\GG$ is a subgroup of $\mathrm{SL}(V)$. This holds in particular for a variety whose singularities are isolated points locally analytically equivalent to a cone over $v_2(\mathbb{P}^3) \subset \mathbb{P}(S^2 \mathbb{C}^4)$. 
\bigskip

In the local case, the above result was already known for a long time (see \cite{vdb}, for instance). The heart of Theorem \ref{quotienttheo} is the existence result of categorical crepant resolutions for quotient singularities in the \textbf{global setting}. Indeed, there is a priori no reason for the local resolutions constructed in $\cite{vdb}$ to glue globally. The main point of $\cite{quotient}$ is to exhibit a sheaf of non-commutative algebras which provides such a gluing of the local resolutions.

\end{subsection}
\end{section}

\begin{section}{Compact hyper-K\"ahler categories}

Recall that a holomorphically sympletic variety of dimension $2m$ is (in the projective case) a smooth projective variety $X$ having trivial canonical bundle and endowed with a $2$-form $\sigma \in H^0(X, \Omega^2_X)$, such that $\sigma^{\wedge m} \neq 0$. On says that $X$ is compact hyper-K\"ahler if $X$ is simply connected and $\sigma$ generates $H^0(X, \Omega^2_X)$. Since $\sigma$ defines an isomorphism $\sigma : \Omega_X \rightarrow T_X$, one can equivalently say that $X$ is holomorphically symplectic if $X$ is smooth simply connected projective with trivial canonical bundle and there exists a Poisson bracket $\theta \in H^0(X, \bigwedge^2 T_X)$, such that $\theta^{\wedge m} \neq 0$. Hence, one could be tempted to give the following definition:

\begin{defi}[naive definition of holomorphically symplectic categories] \label{naivedef}
Let $\T$ be a smooth compact triangulated category. We say that $\T$ is \textbf{holomorphically symplectic of dimension $2m$} if the shift by $2m$ is a Serre functor for $\T$ and there exists $\theta \in \HH^2(\T)$ such that $\theta^{\circ m} \neq 0$.
\end{defi}

Such a definition has the advantage to be invariant by equivalences. Its (non-negligible) drawback is that the derived categories of many non holomorphically symplectic varieties are then to be considered as holomorphically symplectic categories. Indeed, the Hochschild-Kostant-Rosenberg isomorphism \cite{markarian} shows that for $X$ smooth projective, there is a decomposition (compatible with products on both sides \cite{calaque-vdb, Huy-Nieper}):

\begin{equation*}
\HH^2(X) = H^0(X, \bigwedge^2 T_X) \oplus H^1(X,T_X) \oplus H^2(X, \OO_X).
\end{equation*}
Hence, with Definition \ref{naivedef}, the derived category of an abelian surface would be considered as a holomorphically symplectic category, which is something we want to avoid. The main problem here is to define categorically one of the algebras $H^0(X, \bigwedge^{\bullet} T_X)$ or $H^0(X, \bigwedge^{\bullet} \Omega_X)$ (the latter being isomorphic, by Hodge duality, to the algebra $H^{\bullet}(X,\OO_X)$). We will give such a definition in the first subsection below. 

\begin{subsection}{Homological units}

Let $X$ be a algebraic variety and let $\F \in \DB(X)$ be an object whose rank is not zero. Then the trace map:

\begin{equation*}
\mathrm{Tr} : \RR \Hh (\F,\F) \rightarrow \OO_X
\end{equation*}
splits and gives a splitting:

\begin{equation*}
 \HHH^{\bullet}(\F,\F) = H^{\bullet}(\OO_X) \oplus \HHH^{\bullet} (\F,\F)_0, 
\end{equation*}
where $\HHH^{\bullet} (\F,\F)_0$ is the graded vector space of trace-less endomorphisms. Hence, the algebra $H^{\bullet}(\OO_X)$ appears as a maximal direct factor of the endomorphisms algebra of any object in $\DB(X)$ which rank is not vanishing. We will see below that this algebra is an important categorical invariant.

\begin{defi} \label{homounit}
Let $\C$ be an abelian category with a non-trivial rank function and $\T$ be a full admissible subcategory in $\DB(\C)$. A graded algebra $\mathfrak{T}^{\bullet}$ is called a \textbf{homological unit} for $\T$ (with respect to $\C$), if $\mathfrak{T}^{\bullet}$ is maximal for the following properties : 
\begin{enumerate}

\item for any object $\F \in \T$, there exists a pair of morphisms $i_{\F} : \mathfrak{T}^{\bullet} \rightarrow  \HHH^{\bullet}(\F,\F)$ and $t_{\F} : \HHH^{\bullet}(\F, \F) \rightarrow \mathfrak{T}^{\bullet}$ with the properties:
 \begin{itemize}

 \item the morphism $i_{\F} : \mathfrak{T}^{\bullet} \rightarrow  \HHH^{\bullet}(\F,\F)$ is a graded algebra morphism which is functorial in the following sense. Let $\F, \G \in \T$ and let $a \in \mathfrak{T}^{k}$ for some $k$. Then, for any morphism $\psi : \F \rightarrow \G$, there is a commutative diagram:
\begin{equation*}
\xymatrix{ \F \ar[rr]^{i_{\F}} \ar[dd]^{\psi} & &\F [k] \ar[dd]^{\psi[k]} \\
& &  \\
\G \ar[rr]^{i_{\G}} & & \G [k]}
\end{equation*}

\item the morphism $t_{\F} : : \HHH^{\bullet}(\F, \F) \rightarrow \mathfrak{T}^{\bullet}$ is a graded vector spaces morphism which satisfies the dual functoriality property of $i_{\F} $.
\end{itemize}

\item for any $\F \in \T$ which rank (seen as an object in $\DB(\C)$) is not vanishing, the morphism $t_{\F}$ splits $i_{\F}$ as a morphism of graded vector spaces.

\end{enumerate}

With hypotheses as above, an object $\F \in \T$ is said to be \textup{unitary}, if $\HHH^{\bullet}(\F,\F) = \mathfrak{T}^{\bullet}$, where $\mathfrak{T}^{\bullet}$ is a homological unit for $\T$.

\end{defi}
Of course, one can not expect that all examples of homological units as defined above will be significant. In the main applications of the present paper, one will look at $\C = Coh(X), Coh^{\GG}(X)$ or $Coh(X, \alpha)$, where $X$ is a smooth projective variety, $\GG$ a reductive algebraic group acting linearly on $X$, $\alpha$ a Brauer class on $X$ and the rank function will be the obvious one. However, it is well possible that many new examples of homological units coming from representation theory will be discovered, so that it seems sensible to give a general definition that does not restrict to purely geometrical examples. 

Note also that the hypothesis of non-vanishing rank for the splitting is a technical hypothesis which is important. It would be very interesting to know if there are some non-trivial examples where the splitting occurs whatever the rank of the object.

\begin{exem}
\begin{enumerate}
\item Let $X$ be a smooth algebraic variety and $\alpha \in\mathrm{Br}(X)$, a class in the Brauer group of $X$. Consider $\C = Coh(X,\alpha)$, the category of coherent $\alpha$-twisted sheaves on $X$. One can define a rank function on $\C$ as being the rank of $\F$ when seen as an $\OO_X$-module. Then for any $\F \in \DB(\C)$, we have a trace map:

\begin{equation*}
\mathrm{Tr} : \RR \Hh_{\DB(\C)} (\F,\F) \rightarrow \OO_X
\end{equation*}

which splits when the rank of $\F$ is not zero. As a consequence, for all $\F \in \DP(\C)$, we have a graded algebra morphism:
\begin{equation*}
 H^{\bullet}(\OO_X) \rightarrow  \HHH^{\bullet}_{\DB(\C)}(\F,\F)
\end{equation*}
which is split (as a morphism of vector spaces) when the rank of $\F$ is not zero. The morphism $H^{\bullet}(\OO_X) \rightarrow  \HHH^{\bullet}_{\DB(\C)}(\F,\F)$ is given by $a \rightarrow id_{\F} \otimes a$, so that the functoriality property is clearly satisfied. Furthermore, if $L$ is a twisted line bundle in $\DB(Coh(X,\alpha))$, we have $\HHH^{\bullet}_{\DB(\C)}(L,L) = H^{\bullet}(\OO_X)$. Thus, $H^{\bullet}(\OO_X)$ is maximal for the properties required in Definition \ref{homounit} and it is a homological unit for $\C$.

\item Let $X$ be a smooth algebraic variety and $\GG$ be a reductive algebraic group acting linearly on $X$. For any $\F \in \DB(Coh^{\GG}(X))$, the trace map $\mathrm{Tr} : \RR \Hh (\F, \F) \rightarrow \OO_X$ is $\GG$-equivariant and it is split if the rank of $\F$ is non-zero. Hence, for all $\F \in \DB(Coh^{\GG}(X)) $, we have a graded algebra morphism:

\begin{equation*}
 H^{\bullet}(\OO_X)^{\GG} \rightarrow \HHH^{\bullet}_{\DB(Coh^{\GG}(X))}(\F, \F), 
\end{equation*}
which is split (as a morphism of vector spaces) when the rank of $\F$ is not zero. The morphism $H^{\bullet}(\OO_X)^{\GG} \rightarrow  \HHH^{\bullet}(\F,\F)$ is again given by $a \rightarrow id_{\F} \otimes a$, so that the functoriality property is also satisfied. Furthermore, if $L$ is a $\GG$-invariant line bundle on $X$, we have $\HHH^{\bullet}_{\DB(\C)}(L,L) = H^{\bullet}(\OO_X)^{\GG}$. Hence, the algebra $ H^{\bullet}(\OO_X)$ is maximal for the properties required in Definition \ref{homounit} and it is a homological unit for $\C$. This readily generalizes for any smooth Deligne-Mumford stack. Namely, if $\X$ is a smooth Deligne-Mumford stack, then $H^{\bullet}(\OO_{\X})$ is a homological unit for $\DB(\X)$. Note that all line bundles on $\X$ are unitary objects.
\end{enumerate} 
\end{exem}

One would like to know when the homological unit is unique and independent of the embedding in the derived category of an abelian category. This question seems to be interesting for itself and it it does not have an obvious answer. I discuss this invariance problem in \cite{homounit}), where I give some applications to the conjectural derived invariance of Hodge numbers. The following result is a slight generalization the fifth assertion of Theorem $2.0.10$ in \cite{homounit}:

\begin{theo} \label{homounitiso}
Let $X$ and $Y$ be smooth projective varieties of dimension $4$. Let $\Phi : \DB(X) \hookrightarrow \DB(Y)$ be a fully faithful functor such that $\OO_{Y} \in \Phi(\DB(X))$ and that $\mathbb{C}(y) \in \Phi(\DB(X))$ for generic $y \in Y$. Then we have an isomorphism of graded algebras:
\begin{equation*}
H^{\bullet}(\OO_X) \simeq H^{\bullet}(\OO_Y).
\end{equation*}
\end{theo}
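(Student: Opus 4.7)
The plan is to transport the graded algebra $H^\bullet(\OO_Y) = \HHH^\bullet_Y(\OO_Y, \OO_Y)$ to $\DB(X)$ via the full faithfulness of $\Phi$, to realise it as $\HHH^\bullet_X(F, F)$ for a well-chosen object $F$, and then to show that $F$ is a unitary object in the sense of Definition~\ref{homounit}. This would give $\HHH^\bullet_X(F,F) = H^\bullet(\OO_X)$ as graded algebras, which is exactly the conclusion.

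\smallskip

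Concretely, I would first pick $F \in \DB(X)$ with $\Phi(F) \simeq \OO_Y$ and, for each $y$ in a dense open $U \subset Y$, an $E_y \in \DB(X)$ with $\Phi(E_y) \simeq \mathbb{C}(y)$. Full faithfulness of $\Phi$ respects Yoneda products, so it delivers at once a graded algebra isomorphism
\[
\HHH^\bullet_X(F,F) \; \simeq \; H^\bullet(\OO_Y).
\]
Next I would check that the rank of $F$ is non-zero: the identities $\HHH^0_X(F, E_y) = \HHH^0_Y(\OO_Y, \mathbb{C}(y)) = \mathbb{C}$ for every $y \in U$ prevent $F$ from being supported on a proper closed subset of $X$, so its rank (as an object of $\DB(Coh(X))$) is well-defined and positive. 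The homological unit formalism of the preceding subsection then produces a split injection of graded algebras
\[
H^\bullet(\OO_X) \; \hookrightarrow \; \HHH^\bullet_X(F, F) \; \simeq \; H^\bullet(\OO_Y),
\]
which establishes one half of the theorem and uses essentially only the hypothesis $\OO_Y \in \Phi(\DB(X))$.

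\smallskip

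The crucial step is then to show that $F$ is actually unitary, i.e.\ that this split injection is an equality. This is where the hypothesis on generic skyscrapers and the restriction $\dim = 4$ must enter the picture. The family $\{E_y\}_{y \in U}$ provides a $4$-parameter family of ``point-like'' objects on $X$ satisfying $\HHH^\bullet(E_y, E_y) \simeq \bigwedge^\bullet T_y Y$ (of dimensions $1, 4, 6, 4, 1$) and $\HHH^\bullet(E_y, E_{y'}) = 0$ for generic $y \neq y'$. The strategy is to use a Bondal--Orlov/Bridgeland type rigidity argument to show that this family forces an identification, over dense opens, of $X$ with $Y$ under which $F$ corresponds to a line bundle (and therefore to a unitary object), so that the above split injection cannot have a non-zero traceless complement. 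Alternatively, one can try to match the trace decomposition of $\HHH^\bullet(F, F)$ against Serre duality on the admissible subcategory $\T = \Phi(\DB(X)) \subset \DB(Y)$ and rule out the traceless summand by a dimension count that only closes up in dimension $4$.

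\smallskip

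The main obstacle is precisely this last step: all the previous reductions are formal consequences of the homological unit formalism together with full faithfulness of $\Phi$, whereas the genuine use of the skyscraper hypothesis and of $\dim = 4$ only intervenes when promoting the split injection $H^\bullet(\OO_X) \hookrightarrow H^\bullet(\OO_Y)$ to an isomorphism. I expect this to require a rather delicate compatibility analysis between the Fourier--Mukai kernel of $\Phi$ and the Serre structures on both $\T$ and $\DB(Y)$, and this is likely the step where the restriction to dimension $4$ is genuinely needed.
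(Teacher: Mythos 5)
Your proposal contains two genuine gaps, and they sit exactly where the paper's proof does its real work. The first is your claim that the object $F$ with $\Phi(F)\simeq \OO_Y$ has non-zero rank. From $\HHH^{0}_X(F,E_y)\simeq \HHH^{0}_Y(\OO_Y,\mathbb{C}(y))=\mathbb{C}$ you can only conclude that the support of $F$ meets the support of each $E_y$; since the $E_y$ are arbitrary objects of $\DB(X)$ (not skyscrapers on $X$), nothing prevents all these intersections from happening inside a fixed proper closed subset. Worse, even full support would not give non-zero rank: the rank of a complex is the alternating sum of the ranks of its cohomology sheaves, so an object such as $\OO_X\oplus\OO_X[1]$ has full support and rank zero. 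Non-vanishing of a rank is precisely the hard point, and it is where the paper spends almost all of its proof: it writes $\Phi$ as a Fourier--Mukai functor with kernel $\G$ (Orlov), assumes the ranks of all $\Phi(L)$, $L\in\Pic(X)$, vanish, and derives a contradiction via Grothendieck--Riemann--Roch, the coincidence of numerical and homological equivalence for curves and divisors, and the Hodge--Riemann bilinear relations for the middle-degree term. This is exactly where the hypothesis $\mathbb{C}(y)\in\Phi(\DB(X))$ and the restriction $\dim =4$ enter --- contrary to your expectation that they are only needed at the very end, and contrary to your remark that the split injection step uses ``essentially only'' the hypothesis $\OO_Y\in\Phi(\DB(X))$. (Note also that the paper runs this argument on the other side: it works with $\Phi(L)$ and the trace map on $Y$, obtaining the injection $H^{\bullet}(\OO_Y)\hookrightarrow H^{\bullet}(\OO_X)$, the direction opposite to yours.)

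The second gap is the step you yourself flag as the ``main obstacle'': promoting the split injection to an isomorphism. Your suggested routes (a Bondal--Orlov/Bridgeland rigidity argument identifying $X$ and $Y$ over dense opens, or a Serre-duality dimension count) are speculative and are not what the paper does; in fact the paper's closing step is formal and dimension-independent. It applies the same non-vanishing-rank argument to the left adjoint $\Phi^{*}:\DB(Y)\rightarrow\DB(X)$, producing $L\in\Pic(Y)$ with $\Phi^{*}(L)$ of non-zero rank, hence a split injection $H^{\bullet}(\OO_X)\hookrightarrow \HHH^{\bullet}(\Phi^{*}(L),\Phi^{*}(L))$; it then uses the decomposition triangle $\Psi^{!}(L)\rightarrow L\rightarrow \Phi^{*}(L)$ coming from the semiorthogonal decomposition, together with axiom \textbf{TR3}, to show that the natural map $\HHH^{\bullet}(L,L)\simeq H^{\bullet}(\OO_Y)\rightarrow \HHH^{\bullet}(\Phi^{*}(L),\Phi^{*}(L))$ is surjective. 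Comparing dimensions degree by degree, these two maps squeeze against the earlier injection $H^{\bullet}(\OO_Y)\hookrightarrow H^{\bullet}(\OO_X)$ and force it to be an isomorphism of graded algebras. If you want to salvage your plan, you should (i) replace the support argument by the Chern-character contradiction argument for ranks, and (ii) close the loop by this two-sided injection/surjection dimension count applied to $\Phi$ and its adjoint, rather than by any geometric identification of $X$ with $Y$.
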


Note that the assumptions $\OO_{Y} \in \Phi(\DB(X))$ and $\mathbb{C}(y) \in \Phi(\DB(X))$, for generic $y \in Y$, are not both superfluous. Indeed, if $X$ is a smooth subvariety of $Y$ and $\tilde{Y}$ is the blow-up of $Y$ along $X$, there is a fully faithful embedding $ \Phi : \DB(X) \hookrightarrow \DB(\tilde{Y})$. But we have $H^{\bullet}(\OO_X) \neq H^{\bullet}(\OO_{\tilde{Y}})$ in general. Of course, it is clear that $\OO_{\tilde{Y}} \notin \Phi(\DB(X))$ and that $\CC(\tilde{y}) \notin \Phi(\DB(X))$ for generic $\tilde{y} \in \tilde{Y}$.  

\bigskip

\begin{proof}
The proof follows closely the lines of the proof of Theorem $2.0.10$ in \cite{homounit}. First we will prove that there exists $L \in \Pic(X)$ such that the rank of $\Phi(L)$ is non-zero. We proceed by absurd. Assume that for all $L \in \Pic(X)$, the rank of $\Phi(L)$ is zero. First of all, using Orlov's representability Theorem, we can see $\Phi$ as a Fourier-Mukai kernel with kernel $\G \in \DB(X \times Y)$. This means that $\Phi(?) = \RR p_* \left( \LL q^*(?) \otimes \G \right)$, where $p$ and $q$ are the natural projections in the diagram:

\begin{equation*}
\xymatrix{ & &  \ar[lldd]_{q} X \times Y \ar[rrdd]^{p} & &  \\
& & & & \\
X & &  & & Y \\}
\end{equation*}

Since $Y$ is smooth, the vanishing of the rank of $\Phi(L)$ for all $L \in \Pic(X)$, implies the vanishing:

\begin{equation*}
\chi(\Phi(L) \otimes \CC(y)) = 0,
\end{equation*}
for generic $y \in Y$ and for all $L \in \Pic(X)$. This means that 
$$\chi(\RR p_* \left( \LL q^*(L) \otimes \G \right) \otimes \CC(y)) = 0,$$ 
for all $L \in \Pic(X)$ and generic $y \in Y$. Using the projection formula for $\RR p_*$ and the Leray spectral sequence for $\RR p_*$ and $\RR \Gamma$, we find that it is equivalent to:
$$\chi( \LL q^*(L) \otimes \G  \otimes \LL p^* \CC(y)) = 0,$$ 
for all $L \in \Pic(X)$ and generic $y \in Y$. As $\LL p^* \CC(y) = \OO_{X \times y}$, we find that:
$$\chi(L \otimes {j_y}^*\G) = 0,$$
 for all $L \in \Pic(X)$, generic $y \in Y$ and where $j_y : X \times y \hookrightarrow X \times Y$ is the natural inclusion. This can be rewritten also as:
 $$\chi(L_1^{\otimes k_1} \otimes \cdots L_p^{\otimes k_p} \otimes {j_y}^*\G) = 0,$$
 for all $L _1, \cdots , L_p \in \Pic(X)$, all $k_1, \cdots , k_p \in \mathbb{N}$ and generic $y \in Y$. Using the Grothendieck-Riemann-Roch Theorem, we find:
 
 \begin{equation*}  
 \int_{X} \ch(L_1^{\otimes k_1} \otimes \cdots  L_p^{\otimes k_p}).\ch(j^*_y \G).td(X) = 0,
 \end{equation*}
 for all $L _1, \cdots , L_p \in \Pic(X)$, all $k_1, \cdots , k_p \in \mathbb{Z}$ and generic $y \in Y$. As a consequence, we get:
 
 \begin{equation} \label{eq1}
\left( c_1(L_1)^{k_1}. \cdots . c_1(L_p)^{k_p}. \ch(j^*_y \G).td(X) \right)_{8-2k} = 0,
 \end{equation}
for all $L _1, \cdots , L_p \in \Pic(X)$, all $0 \leq k \leq 4$, all $k_1, \cdots , k_p \in \mathbb{N}$ such that $k_1 + \cdots + k_p =k$. The Chern character is taken here to be with value in $H^{\bullet}(X, \CC)$. Since numerical equivalence and homological equivalence coincide for curves and divisors, we deduce that:

\begin{equation*}
\left( \ch(j_y^* \G).td(X) \right)_{2k} = 0,
\end {equation*}
for $k=0,1,3,4$.

\bigskip

Let us prove that $\left( \ch(j_y^* \G).td(X) \right)_{4}$ also vanishes. By equation \ref{eq1}, we know that $\left( \ch(j_y^* \G).td(X) \right)_{4}$ is in the primitive cohomology of $X$. Assume that $\left( \ch(j_y^* \G).td(X) \right)_{4} \neq 0$, then the Hodge-Riemann bilinear relations imply that:

\begin{equation*}
\left( \ch(j_y^* \G).td(X) \right)_{4} . \left( \ch(j_y^* \G).td(X) \right)_{4} \neq 0.
\end{equation*}

But $\left( \ch(j_y^* \G).td(X) \right)_{2k} = 0$ for $k=0,1,3,4$, so that $\ch(j_y^* \G)  = \ch(j_y^*\G).td(X).td(X)^{-1}$ has non-vanishing components only in degree $4,6,8$ and its degree $4$ component is $\left( \ch(j_y^* \G).td(X) \right)_{4}$. Hence, we find that $\ch(j_y^* \G^{\vee})$ has non-vanishing components only in degree $4,6,8$ and that its degree $4$ component is $\left( \ch(j_y^* \G).td(X) \right)_{4}$ (here $\G^{\vee}$ is the derived dual of $\G$). We deduce that:

\begin{equation} \label{eq2}
\begin{split}
& \int_{X} \ch \left((j_y^* \G^{\vee})^{\vee}) \right . \ch((j_y^* \G^{\vee}).td(X) \\
= & \left( \ch(j_y^* \G).td(X) \right)_{4}.\left( \ch(j_y^* \G).td(X) \right)_{4} \neq 0. \end{split}
\end{equation}

But the Grothendieck-Riemann-Roch Theorem implies:

\begin{equation*}
\int_{X} \ch \left((j_y^* \G^{\vee})^{\vee}) \right . \ch((j_y^* \G^{\vee}).td(X) = \chi(j_y^* \G^{\vee}, j_y^* \G^{\vee}).
\end{equation*}
 On the other hand the left adjoint to $\Phi : \DB(X) \rightarrow \DB(Y)$ is a Fourier-Mukai transform with kernel $\G^{\vee} \otimes p^* K_Y$. This shows that $j^*_y \G^{\vee} \simeq \Phi^* (\CC(y))$, where $\Phi^*$ is the left adjoint to $\Phi$. Thus, we find that:
 
 \begin{equation*}
  \chi(j_y^* \G^{\vee}, j_y^* \G^{\vee}) = \chi \left(\Phi^*(\CC(y)), \Phi^*(\CC(y)) \right).
 \end{equation*}
We know by hypothesis that $\CC(y) \in \Phi(\DB(X))$, for generic $y \in Y$, and that $\Phi^* : \Phi(\DB(X)) \longrightarrow \DB(X)$ is an equivalence. We conclude that:

\begin{equation*}
\chi \left(\Phi^*(\CC(y)), \Phi^*(\CC(y)) \right) = \chi \left(\CC(y), \CC(y) \right) = 0.
\end{equation*} 
This is a contradiction with equation \ref{eq2} and we this proves that $\left( \ch(j_y^* \G).td(X) \right)_{4} = 0$ for generic $y \in Y$. We deduce that $\ch(j^*_y \G) = 0$ and then that $\ch(j_y^* \G^{\vee}) = 0$ for generic $y \in Y$. This translates as $\ch \left(\Phi^*(\CC(y)) \right) = 0$. But this is impossible. Indeed, $\Phi^* : \Phi(\DB(X)) \longrightarrow \DB(X)$ being an equivalence, we know that it induces a bijection between the image of the Chern character from $\Phi(\DB(X))$ to $H^{\bullet}(Y, \CC)$ and $H^{\bullet}(X,\CC)$. Since $\CC(y) \in \Phi(\DB(X))$ and that its class in $H^{\bullet}(Y,\CC)$ is non-zero, we know that the class of $\Phi^*(\CC(y))$ must also be non-zero.

\bigskip

We conclude that our starting hypothesis is absurd. Thus, there exists $L \in \Pic(X)$ such that the rank of $\Phi(L)$ is non-zero. Using the trace map, we find an injection of graded algebras:

\begin{equation*}
H^{\bullet}(\OO_Y) \hookrightarrow \mathrm{Hom}^{\bullet}(\Phi(L), \Phi(L)).
\end{equation*}

But $\Phi : \DB(X) \hookrightarrow \DB(Y)$ is fully faithful, so that $\mathrm{Hom}^{\bullet}(\Phi(L), \Phi(L)) \simeq \mathrm{Hom}^{\bullet}(L, L) \simeq H^{\bullet}(\OO_X)$. As a consequence, we have an injection of graded algebras:

\begin{equation} \label{eq3}
H^{\bullet}(\OO_Y) \hookrightarrow H^{\bullet}(\OO_X).
\end{equation}

\bigskip
\bigskip
\bigskip
Now we consider the functor $\Phi^* : \DB(Y) \longrightarrow \DB(X)$. The very same proof as above shows that there exists $L \in \Pic(Y)$ such that the rank of $\Phi^*(L)$ is non zero. The trace maps yields again an injection of graded algebras:

\begin{equation*}
H^{\bullet}(\OO_X) \hookrightarrow \mathrm{Hom}^{\bullet}(\Phi^*(L),\Phi^*(L)).
\end{equation*}
On the other hand, given a map $a_k :  L \rightarrow L[k]$ in $\DB(Y)$, we know that there exists unique $b_k \in \mathrm{Hom}(\Phi^* L, \Phi^* L[k])$ and $c_k \in \mathrm{Hom}(\Psi^{!}(L), \Psi^{!}(L)[k])$ such that the diagram:

\begin{equation*}
\xymatrix{ \Psi^{!}(L) \ar[dd]^{c_k} \ar[rr]& & L \ar[rr] \ar[dd]^{a_k} & & \Phi^{*} (L) \ar[dd]^{b_k} \\
& & & & \\
\Psi^{!}L[k] \ar[rr] & & L[k] \ar[rr]  & & \Phi^{*} L[k] \\}
\end{equation*}
commutes (here $\Psi : {}^{\perp} \Phi(\DB(X)) \hookrightarrow \DB(Y)$ is the left-orthogonal to $\Phi(\DB(X))$ in $\DB(Y)$ and $\Psi^{!}$ is the right adjoint to $\Psi$). Using axiom \textbf{TR3} for the definition of a triangulated category, we deduce that the natural map $ \mathrm{Hom}^{\bullet}(L,L) \longrightarrow \mathrm{Hom}^{\bullet}(\Phi^*(L),\Phi^*(L)) $ is an epimorphism of graded vector spaces. Since $ \mathrm{Hom}^{\bullet}(L,L) \simeq H^{\bullet}(\OO_Y)$, we conclude that the injective map of graded algebras in equation \ref{eq3} is actually an isomorphism of graded algebras.

\end{proof}

\end{subsection}

\begin{subsection}{Definition and construction techniques}
We first recall the definition of smoothness, compactness and regularity for triangulated categories.

\begin{defi}[\cite{kontse}, \cite{orlov-nonco}]
Let $\T$ be the derived category of DG-modules over some DG-algebra $(\A,d)$ (over $\mathbb{C}$). The category $\T$ is said to be:

\begin{itemize}
\item \textbf{smooth}, if $\A$ is a perfect bi-module over $\A \otimes_{\mathbb{C}} \A^{op}$.
\item \textbf{compact}, if $ \dim H^{\bullet}(\A,d) < + \infty$.
\item \textbf{regular}, if it has a strong generator.
\item \textbf{Calabi-Yau of dimension $p$} if the shift by $p$ is a Serre functor for $\A$.
\end{itemize} 

\end{defi}

Assume that $\T = \DB(X)$, where $X$ is an algberaic over $\mathbb{C}$. It is easily shown that $X$ is smooth and proper over $\mathbb{C}$ if and only if $\T$ is smooth and compact (see \cite{kontse}). Note also that if $\T$ is a semi-orthogonal component of the derived category of a smooth proper scheme over $\mathbb{C}$, then $\T$ is smooth, compact and regular (see \cite{orlov-nonco}). With these definitions in hand, we can introduce the main notion of this paper:

\begin{defi}[compact hyper-K\"ahler categories] Let $\T$ be a smooth, compact and regular triangulated category which is closed under direct summands. Assume that $\T$ is a semi-orthogonal component of $\DB(\C)$, where $\C$ is an abelian category with a rank function. We say that $\T$ is a \textbf{compact hyper-K\"ahler category (with respect to its embedding in $\DB(\C)$)} if $\T$ is Calabi-Yau of dimension $2m$ and there is a unique homological unit for $\T$ (with respect to its embedding in $\DB(\C)$), which is isomorphic $\mathbb{C}[t]/(t^{m+1})$ with $t$ homogeneous of degree $2$.
\end{defi}

Proposition $A.1$ of \cite{Huy-Nieper} implies the following:

\begin{prop} \label{comtononcom}
Let $X$ be an algebraic variety. The category $\DB(X)$ is compact hyper-K\"ahler (with respect to its embedding in $\DB(X)$) if and only if the variety $X$ is compact hyper-K\"ahler.
\end{prop}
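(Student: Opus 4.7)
The proof splits in two directions, with the reverse one being the substantial half. For the forward direction, suppose $X$ is compact hyper-Kähler of dimension $2m$. Then $X$ is smooth and projective, whence $\DB(X)$ is smooth, compact, regular and closed under direct summands (see \cite{kontse, orlov-nonco}). Triviality of $\omega_X$ together with $\dim X = 2m$ makes the shift by $2m$ a Serre functor for $\DB(X)$. From the example in section 3.1, $H^{\bullet}(\OO_X)$ is a homological unit for $\DB(X)$; since any line bundle $L \in \Pic(X)$ is a unitary object with $\HHH^{\bullet}(L, L) \simeq H^{\bullet}(\OO_X)$, any other candidate homological unit must embed into and split back out of $H^{\bullet}(\OO_X)$, so the latter is essentially unique. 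The classical Hodge diamond of a compact hyper-Kähler manifold gives $h^{0,2k}(X) = 1$ for $0 \leq k \leq m$ and $h^{0,j}(X) = 0$ for odd $j$; multiplicatively, $H^{\bullet}(\OO_X)$ is generated by the class $\bar{\sigma} \in H^2(\OO_X)$ Hodge-dual to the symplectic form, with nonvanishing top power $\bar{\sigma}^m$, so $H^{\bullet}(\OO_X) \simeq \mathbb{C}[t]/(t^{m+1})$ with $t$ in degree $2$.

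Conversely, suppose $\DB(X)$ is compact hyper-Kähler of dimension $2m$. Smoothness and compactness of the category force $X$ smooth and proper; the Serre functor being the shift by $2m$ yields $\omega_X \simeq \OO_X$ and $\dim X = 2m$. The homological unit condition gives $H^{\bullet}(\OO_X) \simeq \mathbb{C}[t]/(t^{m+1})$, so by Hodge symmetry $H^0(X, \Omega^{\bullet}_X)$ has the same Betti numbers and contains a distinguished $2$-form $\sigma$, unique up to scalar. The multiplicative compatibility between cup-product on $H^{\bullet}(\OO_X)$ and wedge-product on $H^0(\Omega^{\bullet}_X)$ under Hodge duality transports $t^m \neq 0$ into $\sigma^{\wedge m} \neq 0$; since $\sigma^{\wedge m} \in H^0(X, \omega_X) \simeq H^0(X, \OO_X) = \mathbb{C}$ is a global section of a trivial bundle, its nonvanishing at one point means nonvanishing everywhere, so $\sigma$ is a holomorphic symplectic form.

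What remains is the simple connectedness of $X$; this is precisely the content of \cite[Proposition A.1]{Huy-Nieper}. The strategy is to apply the Beauville--Bogomolov decomposition theorem to a finite étale cover $\tilde{X} \to X$ and write $\tilde{X} = T \times \prod_i Y_i \times \prod_j Z_j$ (torus, strict Calabi--Yau, irreducible hyper-Kähler factors), then use Künneth to compute $H^{\bullet}(\OO_{\tilde{X}})$ as a tensor product. A torus factor would introduce odd-degree generators, while a strict Calabi--Yau factor of dimension $n > 0$ would introduce a top-degree generator not expressible as a power of a degree-$2$ class; neither shape survives passage to $\pi_1$-invariants if the invariants are to reproduce the truncated polynomial algebra $\mathbb{C}[t]/(t^{m+1})$ we found on $X$. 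So $\tilde{X}$ must be a single irreducible hyper-Kähler factor, and then the Euler characteristic identity $\chi(\OO_{\tilde{X}}) = |\pi_1(X)| \cdot \chi(\OO_X)$ combined with $\chi(\OO_X) = \chi(\OO_{\tilde{X}}) = m+1$ forces $|\pi_1(X)| = 1$. The main obstacle is this last step: extracting simple connectedness from purely algebraic data requires the full strength of the Beauville--Bogomolov decomposition, which is why the proposition genuinely leans on \cite{Huy-Nieper}.
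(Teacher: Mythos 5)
Your proof is correct and takes essentially the same route as the paper: the paper's entire proof of this proposition is the single citation to Proposition A.1 of \cite{Huy-Nieper}, which is precisely the result you invoke for the hard part (simple connectedness), while the routine verifications you spell out --- Serre functor $=[2m]$ forcing $\omega_X \simeq \OO_X$ and $\dim X = 2m$, the homological unit of $\DB(X)$ being $H^{\bullet}(\OO_X)$, and the passage between $H^{\bullet}(\OO_X) \simeq \mathbb{C}[t]/(t^{m+1})$ and a nowhere-degenerate holomorphic $2$-form via conjugation --- are left implicit there. Your closing sketch of how Proposition A.1 is itself proved (Beauville--Bogomolov decomposition, K\"unneth, Euler characteristic) goes beyond what the paper records but faithfully describes the cited argument, so the two proofs coincide in substance.
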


We will see that one can construct many examples of compact hyper-K\"ahler categories which are non-commutative. It seems extremely hard to find new examples of commutative compact hyper-K\"ahler manifolds. Actually, one can produce a lot of compact singular holomorphically symplectic varieties \cite{Mukmuk}. But almost all of them do not admit any geometric crepant resolution of singularities. Hence, I believe that the following result opens the door to a new world of compact hyper-K\"ahler spaces.

\begin{theo} \label{hk-crepant}
Let $Y$ be a projective manifold with Gorenstein rational singularities of dimension $2m$. Assume that $\omega_Y = \OO_Y$ and that $H^{\bullet}(\OO_Y) \simeq \mathbb{C}[t]/(t^{m+1})$, with $t$ homogenous of degree $2$. Any categorical strongly crepant resolution of $Y$ is a compact hyper-K\"ahler category.
\end{theo}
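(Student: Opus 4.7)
The plan is to verify, clause by clause, the defining properties of a compact hyper-K\"ahler category for $\T$, viewed as a full admissible subcategory of $\DB(\X)$, where $p:\X\to Y$ is the smooth proper Deligne--Mumford stack underlying the strongly crepant resolution. For the rank function we take the natural one on $\C=\mathrm{Coh}(\X)$, so that $\DB(\C)=\DB(\X)$ and admissibility of $\T$ provides the required semi-orthogonal component structure. Three substantive points need to be checked: (a) the absolute Serre functor of $\T$ is the shift by $2m$; (b) the homological unit of $\T$ with respect to $\DB(\X)$ is isomorphic to $\mathbb{C}[t]/(t^{m+1})$ with $t$ of degree $2$; (c) $\T$ is smooth, compact, regular, and closed under direct summands.

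For (a), the paper already records, right after the definition of strongly crepant resolution, that the absolute Serre functor of $\T$ is tensoring with $p^{*}\omega_{Y}[\dim Y]$. The hypotheses $\omega_{Y}\simeq\OO_{Y}$ and $\dim Y=2m$ reduce this functor to the shift $[2m]$, so $\T$ is Calabi--Yau of dimension $2m$.

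For (b), homological domination gives $\RR p_{*}\OO_{\X}\simeq\OO_{Y}$, whence
\begin{equation*}
H^{\bullet}(\OO_{\X})\;\simeq\;H^{\bullet}(Y,\RR p_{*}\OO_{\X})\;\simeq\;H^{\bullet}(\OO_{Y})\;\simeq\;\mathbb{C}[t]/(t^{m+1})
\end{equation*}
by hypothesis. By the very definition of a categorical resolution, $\LL p^{*}\DP(Y)\subset\T$, and in particular $\OO_{\X}=\LL p^{*}\OO_{Y}$ lies in $\T$. Since $\OO_{\X}$ is a line bundle on the smooth DM stack $\X$, it is a unitary object of $\DB(\X)$ with $\HHH^{\bullet}(\OO_{\X},\OO_{\X})=H^{\bullet}(\OO_{\X})$, as recorded in the second example following Definition~\ref{homounit}. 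For any $\F\in\T$ of non-vanishing rank the trace splits the tautological morphism $a\mapsto\mathrm{id}_{\F}\otimes a$, yielding the functorial graded-algebra embedding $H^{\bullet}(\OO_{\X})\hookrightarrow\HHH^{\bullet}(\F,\F)$ required by axioms (1)--(2) of Definition~\ref{homounit}. Conversely, any candidate $\mathfrak{T}^{\bullet}$ satisfying those axioms must, when tested against the non-zero rank object $\OO_{\X}\in\T$, split-inject into $\HHH^{\bullet}(\OO_{\X},\OO_{\X})=H^{\bullet}(\OO_{\X})$; this simultaneously shows maximality and uniqueness, giving the homological unit $H^{\bullet}(\OO_{\X})\simeq\mathbb{C}[t]/(t^{m+1})$.

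For (c), since $\X$ is smooth and proper, $\DB(\X)$ is smooth, compact and regular; all three properties descend to any full admissible subcategory, and closure under direct summands is inherited from the Karoubian property of $\DB(\X)$. Assembling (a), (b) and (c) produces the theorem. The only mildly delicate point is the maximality half of (b): it is essential that the resolution provides us with a unitary object in $\T$ that realizes the entire candidate unit, which is exactly what the containment $\LL p^{*}\OO_{Y}\in\T$ supplies — a weaker kind of subcategory of $\DB(\X)$ containing no unitary line bundle would leave maximality undetermined.
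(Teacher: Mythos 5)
Your proposal is correct and follows essentially the same route as the paper's own proof: both deduce the Calabi--Yau property from the strong crepancy together with $\omega_Y \simeq \OO_Y$, both obtain the homological unit from $\RR p_*\OO_{\X}\simeq\OO_Y$ and the splitting trace map, and both use $\OO_{\X}=\LL p^*\OO_Y\in\T$ to pin down uniqueness and maximality of the unit. Your write-up is if anything slightly more explicit than the paper on the maximality step, but the underlying argument is identical.
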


The above statement is slightly ambiguous as we haven't proved that the notion of compact hyper-K\"ahler category is independent of the embedding inside the derived category of an abelian category with a rank function. However, our definition of categorical resolution always refer to a Deligne-Mumford stack which homologically dominates $Y$. In the above statement, we implicitly refer to the embedding of $\T$ inside the derived category of this Deligne-Mumford stack.

\bigskip

\begin{proof}

Let $p : \X \rightarrow Y$ be a projective Deligne-Mumford stack which homologically dominates $Y$ and let $\T \subset \DB(\X)$ be an admissible full subcategory such that the induced map : $\RR p_* : \T \rightarrow \DB(Y)$ is a strongly crepant resolution. Since $\T$ is an admissible subcategory of the derived category of a smooth projective Deligne-Muford stack, we know that $\T$ is smooth, compact and regular. Furthermore, it is a strongly crepant resolution of a Gorenstein projective variety whose dualizing bundle is trivial, hence $\T$ is Calabi-Yau of dimension $\dim Y = 2m$.

We are only left to prove that there is a unique homological unit for $\T$ (with respect to its embedding inside $\DB(\X)$), which is isomorphic to $\mathbb{C}[t]/(t^{m+1})$ with $t$ homogeneous of degree $2$. By hypothesis, we have $\RR p_* \OO_{\X} = \OO_Y$, so that $H^{\bullet}(\OO_{\X}) \simeq H^{\bullet}(\OO_Y) \simeq \mathbb{C}[t]/(t^{m+1})$ (with $t$ homogeneous of degree $2$) is a homological unit for $\DB(\X)$. Hence, for all $\F \in \T$, we have a graded algebra morphism:

\begin{equation*}
 \mathbb{C}[t]/(t^{m+1}) \rightarrow \HHH^{\bullet}(\F, \F),
\end{equation*}
given by $a \rightarrow id_{\F} \otimes a$. As a consequence, this morphism satisfies the functoriality condition stated in definition \ref{homounit}. Furthermore this morphism is split when the rank of $\F$ is not zero. But $\OO_X \in \T$, so that there is a unique homological unit for $\T$ (with respect to its embedding in $\DB(\X)$), which is isomorphic to $\mathbb{C}[t]/(t^{m+1})$ with $t$ homogeneous of degree $2$.
\end{proof}

\begin{cor} \label{finite}
Let $X$ be a compact hyper-K\"ahler variety and $\GG$ be a finite group of symplectic automorphisms of $X$. The category $\DB(Coh^{\GG}(X))$ is a compact hyper-K\"ahler category.
\end{cor}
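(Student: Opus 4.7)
The plan is to realize $\DB(Coh^{\GG}(X))$ as a categorical strongly crepant resolution of the quotient $Y := X/\GG$ and then apply Theorem \ref{hk-crepant}. Concretely, I would set $\X := [X/\GG]$, the smooth Deligne--Mumford quotient stack. Its coarse moduli space is $Y$, and one has a canonical identification $\DB(\X) \simeq \DB(Coh^{\GG}(X))$ by the standard interpretation of equivariant sheaves as sheaves on the quotient stack. To invoke Theorem \ref{quotienttheo}, I need to check that the dualizing bundle of $\X$ is the pullback of $\omega_Y$; but since $X$ is hyper-K\"ahler, $\omega_X$ is trivialized by $\sigma^{\wedge m}$, and because $\GG$ preserves $\sigma$ it also preserves $\sigma^{\wedge m}$, so $\omega_{\X}$ is $\GG$-equivariantly trivial, which is exactly the required condition. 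Theorem \ref{quotienttheo} then yields that $\RR p_* : \DB(\X) \rightarrow \DB(Y)$ is a categorical strongly crepant resolution.

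Next I would verify the hypotheses of Theorem \ref{hk-crepant} for $Y$. The dimension of $Y$ is $2m$, and the singularities of $Y$ are quotient singularities; since $\GG$ acts through $\mathrm{Sp}(\sigma) \subset \mathrm{SL}$, these are Gorenstein with trivial canonical bundle (the $\GG$-invariant section $\sigma^{\wedge m}$ descends to a trivialization of $\omega_Y$) and rational by the classical theorem of Burns--Viehweg (finite quotients of smooth varieties have rational singularities). It remains to identify $H^\bullet(\OO_Y)$ as the graded algebra $\mathbb{C}[t]/(t^{m+1})$ with $t$ of degree $2$. Since the singularities are rational, $\RR p_* \OO_\X = \OO_Y$, and $H^\bullet(\OO_\X) = H^\bullet(\OO_X)^{\GG}$ as graded algebras. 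Now $X$ is compact hyper-K\"ahler, so $H^\bullet(\OO_X) \simeq \mathbb{C}[\sigma]/(\sigma^{m+1})$, with the class of $\sigma$ in $H^2(\OO_X) \simeq H^{0,2}(X)$ a generator of degree $2$. Because $\GG$ acts symplectically, it fixes $\sigma$ and hence acts trivially on every power $\sigma^k$, so the invariants agree with the whole algebra.

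Putting these pieces together, all hypotheses of Theorem \ref{hk-crepant} are satisfied for $Y$, and the conclusion is that the categorical strongly crepant resolution $\DB(\X) \simeq \DB(Coh^{\GG}(X))$ is a compact hyper-K\"ahler category.

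I do not expect any serious obstacle: the statement is essentially a recombination of Theorem \ref{quotienttheo} (which supplies the strongly crepant resolution) and Theorem \ref{hk-crepant} (which transfers the hyper-K\"ahler property to a categorical resolution). The one point requiring a little care is the verification that the stack $\X = [X/\GG]$ satisfies the technical hypothesis of Theorem \ref{quotienttheo} on the dualizing line bundle, but this follows immediately from the $\GG$-invariance of $\sigma^{\wedge m}$, which is the essence of the symplectic-automorphism hypothesis.
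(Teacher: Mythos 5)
Your proposal is correct and follows essentially the same route as the paper: realize $\DB(Coh^{\GG}(X))$ as a categorical strongly crepant resolution of $X/\GG$ via Theorem \ref{quotienttheo}, check that the generator of $H^{2}(\OO_X)$ is $\GG$-invariant so that $H^{\bullet}(\OO_{X/\GG}) \simeq \mathbb{C}[t]/(t^{m+1})$, and conclude with Theorem \ref{hk-crepant}. The only difference is cosmetic: you spell out the dualizing-bundle hypothesis of Theorem \ref{quotienttheo} and the identification $H^{\bullet}(\OO_{\X}) = H^{\bullet}(\OO_X)^{\GG}$ in more detail than the paper does.
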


\begin{proof}
One can show directly that $\DB(Coh^{\GG}(X)$ is a compact hyper-K\"ahler category, but I think it is interesting to show it is a consequence of Theorem \ref{hk-crepant}. Indeed, if $\GG$ is a finite group of symplectic automorphisms of $X$, then $X/ \GG$ is a projective Gorenstein variety with rational singularities. The generator of $H^{2}(\OO_X)$ being $\GG$-equivariant, it descends to $X/G$ and its top wedge-product remains non zero on $X/G$. As a consequence, we have $H^{\bullet}(\OO_{X/G}) \simeq \mathbb{C}[t]/(t^{m+1})$ with $t$ homogeneous of degree $2$ ($m$ is the half-dimension of $X$). Theorem \ref{quotienttheo} implies that $\DB(Coh^{\GG}(X)$ is a strongly crepant resolution of $X/G$. Theorem \ref{hk-crepant} then proves that $\DB(Coh^{\GG}(X))$ is a compact hyper-K\"ahler category.
\end{proof}

\bigskip

There has been recently quite a bit of work on symplectic automorphisms of compact hyper-K\"ahler manifolds (\cite{Chiara, BNWS, mongardi}. Using the existing results in the literature and corollary \ref{finite}, one might hope to construct a vast number of different deformation families of compact hyper-K\"ahler categories in each even dimension. This would put the theory of compact hyper-K\"ahler categories on the same footing as the theory of strict Calabi-Yau manifolds : we still have no efficient tools to classify them, but one can construct a large amount of non-equivalent (up to deformation) examples of such spaces in each fixed dimension. 
\bigskip

\begin{rem} The notion of (holomorphically) symplectic stack has been defined by Pantev, To\"en, Vaqui\'e and Vezzosi \cite{PTVV} and by Zhang \cite{ziyuzhang}. It would be of course desirable to know if one can define the notion of \textit{irreducible holomorphically symplectic stack} and if the derived categories of such stacks are related to compact hyper-K\"ahler categories.
\end{rem}

One of my primary goal when developing the theory of compact hyper-K\"ahler categories was to understand whether the sporadic examples of compact hyper-K\"ahler manifolds discovered by O'Grady could be part of a larger sequence of examples living in the non-commutative world. Let $\mathcal{M}_{K3}(2,0,2r)$ be the moduli space of rank $2$ torsion free sheaves with $c_1 = 0$, $c_2 = 2r$ and which are semi-stable with respect to a generic polarization. Because of the parity of $c_2$, these moduli spaces are not smooth for $r \geq 2$. O'Grady proved that $\mathcal{M}_{K3}(2,0,4)$ admits a crepant resolution and that this crepant resolution is a compact hyper-K\"ahler manifold which is not deformation equivalent to the previously known examples of compact hyper-K\"ahler manifolds \cite{OG1}.

\bigskip

It was then proved in \cite{Choy-Kiem} and \cite{KLS} that the moduli spaces $\mathcal{M}_{K3}(2,0,2r)$ do not admit any crepant resolution for $r \geq 3$. Hence one can't hope to find new examples of commutative compact hyper-K\"ahler variety starting with these moduli spaces. However, it seems quite likely that these moduli spaces have categorical crepant resolutions. Exhibiting such resolutions would provide a whole new heap of compact hyper-K\"ahler categories. This would also demonstrate that the O'Grady examples are not sporadic at all : they would be part of a series which naturally lives in the non-commutative world.

\begin{quest}
Let $r \geq 3$ be an integer. Does the moduli spaces $\mathcal{M}_{K3}(2,0,2r)$ admit a categorical strongly crepant resolution of singularities? 
\end{quest}
\end{subsection}

\begin{subsection}{Deformation theory for compact hyper-K\"ahler categories}

In this subsection, I will prove some basic results for the deformation theory of compact hyper-K\"ahler categories. They will be used in the last section of this paper to prove that there exists compact hyper-K\"ahler categories of dimension $4$ which deformations are never equivalent to the derived category of a projective variety. 

\bigskip

I will focus on a specific type of deformation of triangulated categories : deformation inside the derived category of an algebraic variety (all results proven below should carry on without any problem to deformation inside the derived category of a Deligne-Mumford stack). Let $\T \subset \DB(X)$ be a full admissible subcategory. Given a smooth algebraic variety $B$, one wants to define the deformation of $\T$ inside $\DB(X)$ over $B$.

\begin{defi}
Let $X$ be a smooth projective variety, let $\T \subset \DB(X)$ be a full admissible subcategory and $B$ a \textbf{smooth connected algebraic variety} with a marked point $0 \in B$. A \textbf{smooth deformation of} $\T$ \textbf{inside $X$ over $B$} is the data of:
\begin{itemize}
\item a smooth projective morphism $\pi : \X \rightarrow B$ such that $\X_0 = X$,

\item a full admissible subcategory $\D \subset \DB(\X)$, which is $B$-linear, such that $ \EE_0 := \EE \otimes_{\OO_{\X \times_{B} \X}} \OO_{\X_0 \times \X_0} \in \DB(\X_0 \times \X_0)$ is the kernel of the projection $\DB(\X_0) \rightarrow \T$, where $\EE \in \DB(\X \times_{B} \X)$ is the kernel representing the projection functor $\DB(\X) \rightarrow \D$.

\end{itemize}
\end{defi}

The existence of the kernels in the above definition has been proved by Kuznetsov in \cite{kuz6}. We have a semi-orthogonal decomposition $\DB(\X) = \langle \D, {}^{\perp} \D \rangle$ and I denote by ${}^{\perp} \EE \in \DB(\X \times_{B} \X)$ the kernel of the projection $\DB(\X) \rightarrow {}^{\perp} \D$. Let us display a Cartesian diagram which will be important to study the deformation of $\T$ over $B$.

\begin{equation*}
\xymatrix{ & &  \ar[lldd]_{p} \X \times_{B} \X \ar[rrdd]^{q} & &  \\
& & & & \\
\X & & \ar[lldd]_{p_b} \X_b \times \X_b \ar[uu]^{j_b} \ar[rrdd]^{q_b} & & \X \\
& & & & \\
\X_b \ar[uu]^{i_b} & & & & \X_b \ar[uu]^{i_b} }
\end{equation*}

\begin{prop} \label{defdecompo}
With hypotheses and notation as above, for all $b \in B$, there exists a semi-orthogonal decomposition:

\begin{equation*}
\DB(\X_b) = \langle \D_b,{}^{t} \D_b \rangle, 
\end{equation*}
where $\D_b$ (resp. ${}^{t} \D_b$) is the full subcategory of $\DB(\X_b)$ closed under taking direct summands which is generated by the objects $\RR {p_b}_* (\LL q_b^* \F \otimes \EE_b)$ (resp. $\RR {p_b}_* (\LL q_b^* \F \otimes {}^{\perp} \EE_b)$), for $\F \in \DB(X_b)$. 
\end{prop}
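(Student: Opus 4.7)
The plan is to transport the semi-orthogonal decomposition $\DB(\X) = \langle \D, {}^{\perp}\D \rangle$ to each fiber by base change along the closed immersion $j_b : \X_b \times \X_b \hookrightarrow \X \times_B \X$. The crucial fact is that a $B$-linear semi-orthogonal decomposition on the total space is encoded by a pair of mutually orthogonal idempotent Fourier--Mukai kernels together with a canonical distinguished triangle, and all three of these features are preserved by base change when $\pi : \X \to B$ is smooth and proper.

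First I would record the following. By construction, $\EE$ and ${}^{\perp}\EE$ are the kernels of the projectors onto $\D$ and ${}^{\perp}\D$, so they satisfy the convolution identities
\begin{equation*}
\EE \circ \EE \simeq \EE, \qquad {}^{\perp}\EE \circ {}^{\perp}\EE \simeq {}^{\perp}\EE, \qquad \EE \circ {}^{\perp}\EE \simeq 0 \simeq {}^{\perp}\EE \circ \EE
\end{equation*}
in $\DB(\X \times_B \X)$ (convolutions are relative to $B$). Moreover, the tautological semi-orthogonal decomposition gives a distinguished triangle of kernels
\begin{equation*}
{}^{\perp}\EE \longrightarrow \OO_{\Delta_{\X/B}} \longrightarrow \EE \longrightarrow {}^{\perp}\EE[1].
\end{equation*}

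Next I would apply $\LL j_b^{*}$ to this triangle. Since $\pi$ is smooth and $j_b$ is the inclusion of a fiber of the smooth morphism $\X \times_B \X \to B$, standard flat base change applies and produces the triangle
\begin{equation*}
{}^{\perp}\EE_b \longrightarrow \OO_{\Delta_{\X_b}} \longrightarrow \EE_b \longrightarrow {}^{\perp}\EE_b[1]
\end{equation*}
in $\DB(\X_b \times \X_b)$. The key technical step, which I expect to be the main obstacle, is to check that convolution of kernels commutes with base change to the fiber, i.e.\ $(\EE \circ \EE)_b \simeq \EE_b \circ \EE_b$ and similarly for the mixed and orthogonal convolutions. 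This reduces to flat base change applied to the triple product $\X \times_B \X \times_B \X$ and its two-fold fibers over $b$; the smoothness (hence flatness) and properness of $\pi$, together with Kuznetsov's framework in \cite{kuz6}, guarantee that the pushforwards involved in the convolution formula commute with the restriction along $j_b$. Once this is granted, the identities displayed above pass to
\begin{equation*}
\EE_b \circ \EE_b \simeq \EE_b, \qquad {}^{\perp}\EE_b \circ {}^{\perp}\EE_b \simeq {}^{\perp}\EE_b, \qquad \EE_b \circ {}^{\perp}\EE_b \simeq 0 \simeq {}^{\perp}\EE_b \circ \EE_b.
\end{equation*}

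Finally I would assemble the conclusion. The first two identities say that the Fourier--Mukai functors $\Phi_{\EE_b}$ and $\Phi_{{}^{\perp}\EE_b}$ on $\DB(\X_b)$ are idempotent; writing $\D_b$ (respectively ${}^{t}\D_b$) for the full subcategory closed under direct summands generated by the objects $\RR {p_b}_{*}(\LL q_b^{*}\F \otimes \EE_b)$ (respectively with ${}^{\perp}\EE_b$), these are precisely the essential images of the two projectors. The vanishing $\EE_b \circ {}^{\perp}\EE_b \simeq 0$ translates into the semi-orthogonality $\Hom_{\DB(\X_b)}(\D_b, {}^{t}\D_b) = 0$, and the fiberwise triangle yields for every object $\mathcal{G} \in \DB(\X_b)$ a functorial triangle $\Phi_{{}^{\perp}\EE_b}(\mathcal{G}) \to \mathcal{G} \to \Phi_{\EE_b}(\mathcal{G})$ expressing $\mathcal{G}$ as an extension of an object of $\D_b$ by an object of ${}^{t}\D_b$. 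Admissibility on each side then follows from the existence of these projectors, giving the desired semi-orthogonal decomposition $\DB(\X_b) = \langle \D_b, {}^{t}\D_b \rangle$.
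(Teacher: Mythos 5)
Your overall strategy is viable, but it is genuinely different from the paper's on the semi-orthogonality half, so let me compare. The paper never manipulates convolutions of kernels: it takes the generators $\RR {p_b}_*(\LL q_b^*\LL i_b^*\F\otimes{}^{\perp}\EE_b)$ and $\RR {p_b}_*(\LL q_b^*\LL i_b^*\G\otimes\EE_b)$, for $\F,\G\in\DB(\X)$, and computes the relevant $\Hom$ directly, using $\LL q_b^*\LL i_b^*=\LL j_b^*\LL q^*$, flat base change $\RR{p_b}_*\LL j_b^*=\LL i_b^*\RR p_*$, and adjunction plus the projection formula for $i_b$, to land in $\Hom\bigl(\RR p_*(\LL q^*\F\otimes{}^{\perp}\EE),\,\RR p_*(\LL q^*\G\otimes\EE)\otimes\LL\pi^*\CC(b)\bigr)$; this vanishes because the $B$-linearity of $\D$ keeps the second entry in $\D$ while the first lies in ${}^{\perp}\D$. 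The decomposition triangles are then obtained exactly as in your last step, by restricting the kernel triangle to the fiber. Your plan instead pushes the entire idempotent calculus down to the fiber. Note that the step you single out as the main obstacle is in fact the easy one: the projections of $\X\times_B\X\times_B\X$ are flat and proper, being base changes of the smooth projective $\pi$, so pushforward along them commutes with $\LL j_b^*$ by Tor-independent base change, and $\LL j_b^*\OO_{\Delta_{\X/B}}\simeq\OO_{\Delta_{\X_b}}$ again by flatness of $\pi$. In compensation, your route proves more than the proposition: it exhibits $\EE_b$ and ${}^{\perp}\EE_b$ as the projection kernels of the fiberwise decomposition, which is exactly the paper's Corollary \ref{kernel}, and it works for an arbitrary base change $T\to B$ rather than only for points.

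The step that does need attention is the one you dismiss as holding ``by construction''. Kuznetsov's theorem produces kernels whose associated functors are the projection functors; since a Fourier--Mukai functor does not determine its kernel, the functor-level identities $\Phi_{\EE}\circ\Phi_{{}^{\perp}\EE}\simeq0$ and $\Phi_{\EE}\circ\Phi_{\EE}\simeq\Phi_{\EE}$ do not formally yield the object-level isomorphisms $\EE\circ{}^{\perp}\EE\simeq0$ and $\EE\circ\EE\simeq\EE$ in $\DB(\X\times_B\X)$ that your argument restricts to the fiber. This can be repaired: a kernel $K\in\DB(\X\times_B\X)$ is zero if and only if $\LL\iota_x^*K=0$ for the inclusion $\iota_x$ of every slice $q^{-1}(x)$, $x\in\X$ a closed point, and the projection formula gives $\RR p_*\RR{\iota_x}_*\LL\iota_x^*K\simeq\Phi_K(\CC(x))$ with $p\circ\iota_x$ a closed immersion, so the vanishing of $\Phi_K(\CC(x))$ for all $x$ forces $K=0$; applied to the mixed convolutions this gives their vanishing as objects, and convolving the kernel triangle with $\EE$ then yields $\EE\circ\EE\simeq\EE$. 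This argument has to be supplied --- it is the actual content behind your first display. Finally, mind the orientation: with the convention that $\langle\A_1,\A_2\rangle$ requires $\Hom(\A_2,\A_1)=0$ and decomposition triangles $a_2\to T\to a_1$, your triangle $\Phi_{{}^{\perp}\EE_b}(\G)\to\G\to\Phi_{\EE_b}(\G)$ must be paired with the vanishing $\Hom({}^{t}\D_b,\D_b)=0$ --- which is what your convolution identities actually give, via functoriality of the map $\Phi_{{}^{\perp}\EE_b}\to\mathrm{id}$ --- and not with $\Hom(\D_b,{}^{t}\D_b)=0$ as written.
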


\noindent This proposition allows one to think of the $\D_b$ for $b \in B$ as the deformation of $\D_0 = \T$ over $B$.

\bigskip

\begin{proof}
Since $\X \rightarrow B$ is projective, the family of line bundles $\OO_{\X/B}(m)|_{\X_b}, m \in \mathbb{N}$ generates $\DB(\X_b)$. Hence, the category $\D_b$ (resp. ${}^{t} \D_b$) is also generated by the $\RR {p_b}_* (\LL q_b^* \LL i_b ^* \F \otimes \EE_b)$ (resp. $\RR {p_b}_* (\LL q_b^* \LL i_b ^* \F \otimes {}^{\perp} \EE_b)$), for $\F \in \DB(\X)$. I first prove that ${}^{t} \D_b$ is left orthogonal to $\D_b$. Let $\F, \G \in \DB(\X)$, we have:

\begin{equation*}
\begin{split}
& \HHH (\RR {p_b}_* (\LL q_b^*(\LL i_b^* \F) \otimes {}^{\perp} \EE_b), \RR {p_b}_* (\LL q_b^*(\LL i_b^* \G) \otimes \EE_b)) \\
& = \HHH (\RR {p_b}_* (\LL j_b^*( \LL q^* \F \otimes {}^{\perp} \EE)), \RR {p_b}_* (\LL j_b^*( \LL q^* \G \otimes \EE)))\\
& = \HHH (\LL i_b^* (\RR p_*( \LL q^* \F \otimes {}^{\perp} \EE)), \LL i_b^* (\RR p_* ( \LL q^* \G \otimes \EE)))\\
& = \HHH (\RR p_*( \LL q^* \F \otimes {}^{\perp} \EE),\RR {i_b}_*( \LL i_b^* (\RR p_* ( \LL q^* \G \otimes \EE)))) \\
&= \HHH ( \RR p_*( \LL q^* \F \otimes {}^{\perp} \EE), \RR p_* ( \LL q^* \G \otimes \EE) \otimes \RR {i_b}_* \OO_{\X_b}),
\end{split}
\end{equation*}
here the first equality is the identity $\LL q_b^* \LL i_b^* =\LL j_b^* \LL q^*$, the second is the flat base change $\RR {p_b}_* \LL j_b^* = \LL i_b^* \RR p_*$, the third is adjunction with respect to $i_b$ and the fourth is the projection formula with respect to $i_b$. By flat base change for the morphism $\pi : \X \rightarrow B$, we have $\RR {i_b}_* \OO_{\X_b} = \LL \pi^* \mathbb{C}(b)$. The category $\D$ is $B$-linear by hypothesis, so that $\RR p_* ( \LL q^* \G \otimes \EE) \otimes \RR {i_b}_* \OO_{\X_b} \in \D$. As a consequence, we deduce the vanishing:

\begin{equation*}
\HHH (\RR p_*( \LL q^* \F \otimes {}^{\perp} \EE), \RR p_* ( \LL q^* \G \otimes \EE) \otimes \RR {i_b}_* \OO_{\X_b}) = 0.
\end{equation*}
As $\D_b$ (resp. ${}^{t} \D_b$) is the full subcategory of $\DB(\X_b)$ closed under taking direct summands which is generated by the $\RR {p_b}_* (\LL q_b^* \LL i_b ^* \F \otimes \EE_b)$ (resp. $\RR {p_b}_* (\LL q_b^* \LL i_b ^* \F \otimes {}^{\perp} \EE_b)$) for $\F \in \DB(\X)$, the above vanishing finally proves that $\HHH(\F, \G) = 0$, for all $\G \in \D_b$ and $\F \in {}^{t} \D_b$.

\bigskip

We are left to show that for all $\H \in \DB(X_b)$, there exists an exact triangle:

\begin{equation*}
\G \rightarrow \H \rightarrow \F,
\end{equation*}

with $\F \in {}^{t} \D_b$ and $\G \in \D_b$. But on $\X \times_B \X$, we have an exact triangle:

\begin{equation*}
\EE \rightarrow \OO_{\Delta/B} \rightarrow {}^{\perp} \EE.
\end{equation*}
Hence, for all $\F \in \DB(X_b)$, we have an exact triangle:

\begin{equation*}
\RR {p_b}_* \LL (q_b^*(\F) \otimes \EE_b) \rightarrow \F \rightarrow \RR {p_b}_* \LL (q_b^*(\F) \otimes {}^{\perp} \EE_b).
\end{equation*}
\end{proof}

\begin{cor} \label{kernel}
For all $b \in B$, the objects $\EE_b$ (resp. ${}^{\perp} \EE_b$) is the kernel of the projection functor $\DB(\X_b) \rightarrow \D_b$ (resp. $\DB(\X_b) \rightarrow {}^{\perp} \D_b$).
\end{cor}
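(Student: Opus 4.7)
The plan is to argue that Corollary \ref{kernel} is essentially a repackaging of the construction used in the proof of Proposition \ref{defdecompo}, exploiting the uniqueness of the decomposition triangle in any semi-orthogonal decomposition.

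First I would recall the key output of the proof of Proposition \ref{defdecompo}: starting from the exact triangle $\EE \to \OO_{\Delta/B} \to {}^{\perp} \EE$ on $\X \times_B \X$ and applying $\RR {p_b}_* (\LL q_b^*(?) \otimes -)$ after base change to the fiber, one obtains, for every $\H \in \DB(\X_b)$, a canonical exact triangle
\begin{equation*}
\RR {p_b}_* (\LL q_b^* \H \otimes \EE_b) \;\longrightarrow\; \H \;\longrightarrow\; \RR {p_b}_* (\LL q_b^* \H \otimes {}^{\perp} \EE_b).
\end{equation*}
By the very definition of $\D_b$ and ${}^t \D_b$ given in Proposition \ref{defdecompo}, the left-hand term lies in $\D_b$ and the right-hand term lies in ${}^t \D_b$.

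Next I would invoke the uniqueness of decomposition triangles associated to a semi-orthogonal decomposition. Since Proposition \ref{defdecompo} establishes $\DB(\X_b) = \langle \D_b, {}^t \D_b \rangle$, any triangle of the form $A \to \H \to B$ with $A \in \D_b$ and $B \in {}^t \D_b$ must coincide with the image of $\H$ under the canonical projection functors to the two semi-orthogonal factors. Applied to the triangle above, this identifies the projection $\DB(\X_b) \to \D_b$ with the functor $\H \mapsto \RR {p_b}_* (\LL q_b^* \H \otimes \EE_b)$, and the projection $\DB(\X_b) \to {}^t \D_b$ with $\H \mapsto \RR {p_b}_* (\LL q_b^* \H \otimes {}^{\perp} \EE_b)$. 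These are, by construction, the Fourier–Mukai transforms with kernels $\EE_b$ and ${}^{\perp} \EE_b$ respectively, yielding the corollary.

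There is no real obstacle here beyond bookkeeping: the work was already done in Proposition \ref{defdecompo}, and the corollary is the observation that the morphisms witnessing that $\D_b$ and ${}^t \D_b$ form a semi-orthogonal decomposition are the Fourier–Mukai transforms in the restricted kernels. If any subtlety is to be highlighted, it is simply that one should verify that the triangle coming from restricting $\EE \to \OO_{\Delta/B} \to {}^{\perp} \EE$ remains exact after applying $\LL j_b^*$; this is immediate since the restriction is the derived pullback along a regular closed immersion composed with the exact functors $\RR {p_b}_*$ and $\LL q_b^*$.
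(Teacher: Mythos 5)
Your argument is correct in substance, but it is a genuinely different proof from the one in the paper. The paper proves Corollary \ref{kernel} by re-running the base-change identities from the proof of Proposition \ref{defdecompo}: it shows that the Fourier--Mukai functor with kernel $\EE_b$ acts as the identity on the generators $\RR {p_b}_*(\LL q_b^* \F \otimes \EE_b)$ of $\D_b$ and kills the generators of ${}^{t}\D_b$ (and the opposite for ${}^{\perp}\EE_b$), and then concludes using the decomposition and the identification ${}^{t}\D_b = {}^{\perp}\D_b$. You instead treat the two outputs of Proposition \ref{defdecompo} as a black box --- the semi-orthogonal decomposition and the functorial triangle $\RR {p_b}_*(\LL q_b^*\H \otimes \EE_b) \to \H \to \RR {p_b}_*(\LL q_b^*\H \otimes {}^{\perp}\EE_b)$ --- and conclude by uniqueness of decomposition triangles. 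Your route is shorter, avoids repeating the kernel computations, and makes transparent that the corollary is formal once the proposition is in place; the paper's route stays at the level of explicit kernel identities. You do elide the identification ${}^{t}\D_b = {}^{\perp}\D_b$, which is needed to match the corollary as phrased, but that is immediate from the decomposition.

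One caveat on your uniqueness step. Uniqueness of a triangle $A \to \H \to B$ with $A \in \D_b$ and $B \in {}^{t}\D_b$ requires the vanishing $\HHH(A, B[k]) = 0$ for all $k$, i.e.\ semi-orthogonality from the category of the \emph{first} term to that of the \emph{third} term: this is what lets you factor $A \to \H$ uniquely through the fibre of $\H \to B'$ when comparing two such triangles. Proposition \ref{defdecompo} as literally stated proves the opposite vanishing, namely $\HHH(\F, \G) = 0$ for $\F \in {}^{t}\D_b$ and $\G \in \D_b$. This mismatch is not a defect of your argument but an inconsistency of orientation already present in the paper: for a genuine semi-orthogonal decomposition, an orthogonality $\HHH({}^{t}\D_b, \D_b) = 0$ forces the decomposition triangles to run from ${}^{t}\D_b$ to $\D_b$, so either the triangle $\EE \to \OO_{\Delta/B} \to {}^{\perp}\EE$ (hence the fibre triangle) should be written in the opposite order, or the orthogonals should be right orthogonals and the Hom-vanishing computed in the other direction. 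Under either consistent choice of conventions your argument applies verbatim; just make sure the orthogonality you invoke points from the first term of your triangle to the third.
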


\begin{proof}
Using exactly the same identities as in the proof of proposition \ref{defdecompo}, one shows that $\RR {p_b}_*(\LL q_b^*(\G) \otimes \EE_b) $ is quasi-isomorphic to $\G$ if $\G \in \D_b$ and is zero if $\G \in {}^{t} \D_b$ (the opposite holds for ${}^{\perp} \EE_b$). Since Proposition \ref{defdecompo} shows that ${}^{t} \D_b = {}^{\perp} \D_b$, the claim is proved.
\end{proof}

\begin{theo} \label{semiconti}
Let $X$ be a smooth projective variety and $\T \subset \DB(X)$ a full admissible subcategory. Let $B$ a smooth variety and $\D$ be a smooth deformation of $\T$ over $B$. The dimension of the Hochschild homology of $\D_b$ is constant for $b \in B$.
\end{theo}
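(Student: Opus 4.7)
My plan is to combine three ingredients: (i) Kuznetsov's additivity theorem for Hochschild homology under semi-orthogonal decompositions, (ii) the invariance of Hochschild homology of the ambient derived categories in the smooth projective family $\pi : \X \to B$, and (iii) a base-change/upper semi-continuity argument for the fiberwise Hochschild homology of $\D$ and ${}^\perp \D$, turned into constancy by a ``sandwich'' argument.

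For each $b \in B$, Proposition \ref{defdecompo} yields a semi-orthogonal decomposition $\DB(\X_b) = \langle \D_b, {}^{\perp}\D_b \rangle$, and Kuznetsov's additivity theorem (see \cite{kuz6}) then provides, for every $n$, a canonical direct sum decomposition
\begin{equation*}
\HH_n(\DB(\X_b)) \;=\; \HH_n(\D_b) \,\oplus\, \HH_n({}^{\perp}\D_b).
\end{equation*}
Since $\pi$ is smooth and projective and $B$ is connected, the Hodge-to-de Rham spectral sequence of each fiber degenerates uniformly and the Hodge numbers $h^{p,q}(\X_b)$ are locally constant on $B$. Combined with the Hochschild-Kostant-Rosenberg isomorphism $\HH_n(\DB(\X_b)) \simeq \bigoplus_{q-p=n} H^q(\X_b, \Omega^p_{\X_b})$, this shows that $\dim \HH_n(\DB(\X_b))$ is independent of $b$.

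It therefore suffices to prove that $b \mapsto \dim \HH_n(\D_b)$ and $b \mapsto \dim \HH_n({}^{\perp}\D_b)$ are both upper semi-continuous on $B$: two upper semi-continuous functions on a connected variety whose sum is constant must individually be locally constant, hence constant. To obtain the semi-continuity, I would invoke Kuznetsov's description of Hochschild homology of an admissible subcategory, which expresses $\HH_n(\D_b)$ as a $\HHH$-group on $\X_b \times \X_b$ built entirely from the projection kernel $\EE_b$ and canonical Serre data (essentially $\omega_{\X_b}$ and the diagonal). By Corollary \ref{kernel}, the kernel $\EE_b$ is the derived restriction to $\X_b \times \X_b$ of the global kernel $\EE \in \DB(\X \times_B \X)$. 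Running the same construction relatively over $B$ produces a perfect complex on $B$ whose derived fiber at $b$ computes $\HH_n(\D_b)$, by flat base change along the smooth and proper morphism $\X \times_B \X \to B$. Standard upper semi-continuity for cohomology of perfect complexes in flat families then yields the desired semi-continuity; the identical argument with $\EE$ replaced by ${}^{\perp}\EE$ handles ${}^{\perp}\D_b$.

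The main obstacle is this third step, namely transferring Kuznetsov's Hochschild-homology formula from the absolute to the relative setting over $B$. Since $\EE$ is only a perfect complex (not a sheaf), all derived tensor products, $\Hh$'s, and base-change isomorphisms must be handled in the derived sense; moreover, one must verify that the Serre kernel of $\D_b$ is the restriction to $\X_b \times \X_b$ of a relative Serre kernel of $\D$ over $B$ (which, since $\pi$ is smooth of constant relative dimension, amounts to checking that relative Serre duality for $\D/B$ specializes correctly). Once these compatibilities are in place, the remaining ingredients are entirely standard.
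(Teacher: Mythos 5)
Your proposal is correct and takes essentially the same route as the paper's own proof: Kuznetsov's additivity for the decomposition $\DB(\X_b) = \langle \D_b, {}^{\perp}\D_b\rangle$, constancy of $\HH_\bullet(\DB(\X_b))$ via HKR and constancy of Hodge numbers in the smooth projective family, upper semi-continuity of the two summands obtained by writing fiberwise Hochschild homology as the derived fiber of a pushforward of a kernel-built complex (the paper uses $\RR \pi_*(\EE \otimes \EE^{T})$, Theorem 4.5 of \cite{kuz3}, Corollary \ref{kernel} and flat base change), and the same sandwich argument. The Serre-kernel compatibility you flag is harmless here, since $\omega_{\X/B}$ restricts to $\omega_{\X_b}$ on fibers, which is exactly how the paper treats it implicitly.
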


Note that we do not need to assume that the kernel of the projection $\DB(\X) \rightarrow \D$ is flat over $B$.

\begin{proof}
Let $\X \rightarrow B$ be a smooth projective morphism such that $\D$ is a full admissible subcategory of $\DB(\X)$. Let $\EE \in \DB(\X \times_B \X)$ be the kernel of the projection functor $\DB(\X) \rightarrow \D$. By corollary \ref{kernel}, we know that for all $b \in B$, the object $\EE_b \in \DB(\X_b \times \X_b)$ is the kernel of the projection functor $\DB(\X_b) \rightarrow \D_b$. As a consequence of Theorem 4.5 in \cite{kuz3}, we have an equality:

\begin{equation*}
\HH_{\bullet}(\D_b) = {H}^{\bullet}(\X_b \times \X_b, \EE_b \otimes \EE_b^{T}),
\end{equation*}
where $\EE_b^{T}$ is the pull back of $\EE_b$ with respect to the permutation $\X_b \times \X_b \rightarrow \X_b \times \X_b$. Let us prove that the dimension of the cohomology vector spaces $H^{i}(\X_b \times \X_b, \EE_b \otimes \EE_b^{T})$ are upper semi-continuous with respect to $b \in B$ for all $i$. By flat base change for the diagram:

\begin{equation*}
\xymatrix{ \X_b \times \X_b \ar[rr]^{j_b} \ar[dd]^{\pi_b} & & \X \times_B \X \ar[dd]^{\pi}  \\
& &  \\
\mathrm{Spec} (\mathbb{C}(b)) \ar[rr] & & B }
\end{equation*}
we have the equality:
\begin{equation*}
{H}^{i}(\X_b \times \X_b, \EE_b \otimes \EE_b^{T}) = \mathcal{H}^i(\RR \pi_*(\EE \otimes \EE^{T}) \otimes \mathbb{C}(b)).
\end{equation*}

Since $B$ is a smooth variety, we can represent $\RR \pi_*(\EE \otimes \EE^{T}) \otimes \mathbb{C}(b)$ by a bounded complex of vector bundles on $B$, say $E^{\bullet}$. Thus, we only have to show the following : the cohomology sheaves of $E^{\bullet} \otimes \mathbb{C}(b)$ are upper semi-continuous, for $b \in B$. This result is now obvious as the dimension of the image of the differential:

\begin{equation*}
d^{\bullet} \otimes \mathbb{C}(b) : E^{\bullet} \otimes \mathbb{C}(b) \rightarrow E^{\bullet +1} \otimes \mathbb{C}(b)
\end{equation*}
is lower semi-continuous with respect to $B$.

\bigskip

We have proved that the dimension of $\HH_{i}(\D_b)$ is upper semi-continuous with respect to $B$, for all $i$. This holds also true for the dimension $\HH_{i}({}^{\perp} \D_b)$. By corollary 7.5 of \cite{kuz3}, we have:

\begin{equation*}
\HH_{i}(\D_b) \oplus \HH_{i}({}^{\perp}\D_b) = \HH_{i}(\DB(\X_b)).
\end{equation*}
But the morphism $\X \rightarrow B$ is smooth projective, so that the Hodge numbers of $\X_b$ are constant with respect to $B$. By the Hochschild-Kostant-Rosenberg decomposition, this implies that the Hochschild numbers of $\X_b$ are constant. Hence the sum of the dimensions of the cohomology vector spaces $\HH_{i}(\D_b)$ and $\HH_{i}({}^{\perp}\D_b)$ is constant with respect to $B$. But each dimension is upper semi-continuous with respect to $B$, so that they are in fact both constant with respect to $B$.

\end{proof}

Before going turning to deformation results for compact hyper-K\"ahler categories, I want to comment about the level of generality of the deformation theory used above.  In order to define the notion of deformation of an admissible subcategory $\T \subset \DB(X)$, one could be tempted to work with a seemingly more general definition, as follows. A deformation of $\T$ over $B$ is the data of a (not necessarily flat) morphism $\pi : \X \rightarrow \B$ and a $B$-linear admissible subcategory $\D \subset \DB(\X \times_{B} \X)$, such that $\D_0 = \T$ and the flat base change formula holds for $\D$ with respect to the diagram:

\begin{equation*}
\xymatrix{ \D_b \ar[rr]^{\RR {j_b}_*}  & & \D   \\
& &  \\
\ar[uu]^{\LL \tilde{\pi}_b^*} \mathrm{Spec} (\mathbb{C}(b)) \ar[rr]^{{k_b}_*} & & B \ar[uu]^{\LL \tilde{\pi}^*}} 
\end{equation*}

The base change formula would imply $\RR {j_b}_* \LL \tilde{\pi}_b^* \mathbb{C}(b) = \LL \tilde{\pi}^* \RR {k_b}_* \mathbb{C}(b)$. But we have a commutative diagram:

\begin{equation*}
\xymatrix{ \D_d \ar[rr] \ar[rrdd]^{\RR {\tilde{\pi_b}_*}} & & \DB(\X_b) \ar[dd] \\
& &  \\
 & & \DB(\mathrm{Spec}(\mathbb{C}(b))}
\end{equation*}
Assume that $\OO_{\X} \subset \D$. The fact that $\D$ is $B$-linear then implies $\LL \tilde{\pi}_b^* \mathbb{C}(b) = \OO_{X_b}$. As a consequence, we have $\LL \tilde{\pi}^* \RR {k_b}_* \mathbb{C}(b) = {j_b}_*\OO_{X_b}$. In particular, we have $Tor_B^{1}(\OO_{\X}, \mathbb{C}(b)) = 0$. By Theorem $22.3$ of \cite{matsumura}, the morphism $\X \rightarrow B$ is flat. Hence, a strictly more general setting than the one developed above for the deformation of triangulated categories \textbf{can not} be obtained if one requires the following three conditions:

\begin{itemize}
\item  the total space of the deformation is a full admissible subcategory of the derived category of an algebraic variety,

\item the base change formula holds for the total space of the deformation,

\item $\OO_{\X} \in \D$.
\end{itemize}

The first two conditions seem essential if one wants to get some significant homological results while working with admissible subcategories of derived categories of algebraic varieties. As far as the third condition is concerned, it is satisfied in many examples (for instance in the setting of non-commutative resolution of singularities).
\bigskip

I will now focus on the deformation theory of compact hyper-K\"ahler categories. We start with the following:

\begin{lem} \label{defstructural}
Let $X$ be a smooth projective variety, let $\T \subset \DB(X)$ be a full admissible subcategory and $B$ a smooth algebraic variety. Let $\D$ be a smooth deformation of $\T$ over $B$ with respect to $\pi : \X \longrightarrow B$. Assume that $\OO_X \in \T$. Then, there exists an open $0 \in U \subset B$, such that $\OO_{\X_b} \in \D_b$, for all $b \in U$. 
\end{lem}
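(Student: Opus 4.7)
The plan is to package the projection of $\OO_{\X_b}$ onto ${}^{\perp}\D_b$ (whose vanishing is equivalent to $\OO_{\X_b}\in\D_b$) into one global object on $\X$, and then to show that its vanishing spreads from $\X_0$ to a Zariski neighborhood of $\X_0$; the properness of $\pi$ will then produce the required open $U\subset B$.

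Concretely, let ${}^{\perp}\EE\in\DB(\X\times_{B}\X)$ be the kernel of the projection onto ${}^{\perp}\D$ and set $\mathcal{G}:=\RR p_{*}({}^{\perp}\EE)\in\DB(\X)$; since $\LL q^{*}\OO_{\X}=\OO_{\X\times_{B}\X}$, this is precisely the image of $\OO_{\X}$ under that projection functor, and boundedness of $\mathcal{G}$ follows from the smoothness and properness of $p$. Running the flat base change and projection formula identities exactly as in the proof of Proposition \ref{defdecompo}, we obtain for every $b\in B$
\begin{equation*}
\LL i_{b}^{*}\mathcal{G}\simeq\RR p_{b*}\bigl(\LL q_{b}^{*}\OO_{\X_{b}}\otimes{}^{\perp}\EE_{b}\bigr),
\end{equation*}
which by Corollary \ref{kernel} is the projection of $\OO_{\X_{b}}$ onto ${}^{\perp}\D_{b}$. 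Thus $\OO_{\X_{b}}\in\D_{b}$ if and only if $\LL i_{b}^{*}\mathcal{G}=0$, and the hypothesis $\OO_{X}\in\T=\D_{0}$ provides $\LL i_{0}^{*}\mathcal{G}=0$.

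The remaining step is a Nakayama-style descending induction on the non-zero cohomology sheaves of $\mathcal{G}$. If $\mathcal{H}^{N}(\mathcal{G})\neq 0$ is the top one, the truncation triangle yields $\mathcal{H}^{N}(\LL i_{0}^{*}\mathcal{G})\simeq i_{0}^{*}\mathcal{H}^{N}(\mathcal{G})$, so the vanishing above forces $\mathcal{H}^{N}(\mathcal{G})_{x}=\mathcal{I}_{\X_{0},x}\cdot\mathcal{H}^{N}(\mathcal{G})_{x}$ at every $x\in\X_{0}$. Since $\pi(x)=0$ implies $\mathcal{I}_{\X_{0},x}\subset\mathfrak{m}_{x}$ and $\mathcal{H}^{N}(\mathcal{G})_{x}$ is finitely generated over the local ring $\OO_{\X,x}$, Nakayama gives $\mathcal{H}^{N}(\mathcal{G})_{x}=0$; by coherence $\mathcal{H}^{N}(\mathcal{G})$ vanishes on a Zariski open $V_{N}\subset\X$ containing $\X_{0}$. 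Replacing $\mathcal{G}$ by $\tau_{\leq N-1}\mathcal{G}$ on $V_{N}$ and iterating over the finitely many non-zero cohomology sheaves, one obtains an open $V\subset\X$ containing $\X_{0}$ on which $\mathcal{G}$ is quasi-isomorphic to zero. Properness of $\pi$ then makes $\pi(\X\setminus V)\subset B$ closed and disjoint from $0$, so $U:=B\setminus\pi(\X\setminus V)$ is an open neighborhood of $0$ for which $\LL i_{b}^{*}\mathcal{G}=0$ for all $b\in U$, which is exactly the claim.

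The only genuinely delicate point is the Nakayama step: the relevant local ring must be $\OO_{\X,x}$, not $\OO_{B,0}$ (over which $\mathcal{H}^{N}(\mathcal{G})_{x}$ is typically not finitely generated), and it is the smoothness of $\pi$ guaranteeing the inclusion $\pi^{-1}\mathfrak{m}_{0}\cdot\OO_{\X,x}\subset\mathfrak{m}_{x}$ that legitimizes the argument. Everything else is routine base change and coherent sheaf theory.
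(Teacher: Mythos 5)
Your proof is correct and follows essentially the same route as the paper's: both translate the hypothesis $\OO_X \in \T = \D_0$ into the vanishing $\RR {p_0}_*({}^{\perp}\EE_0) = 0$ (via Corollary \ref{kernel} and the base-change identities of Proposition \ref{defdecompo}) and then spread this vanishing from the central fiber to the fibers over a neighborhood of $0$. The only difference is that the paper compresses the spreading-out step into the single phrase ``by semi-continuity,'' whereas you actually prove it --- globalizing to $\mathcal{G} = \RR p_*({}^{\perp}\EE) \in \DB(\X)$, applying the Nakayama argument along $\X_0$ cohomology sheaf by cohomology sheaf, and using properness of $\pi$ --- which is precisely the justification needed here, since ${}^{\perp}\EE$ is not assumed flat over $B$ and the classical semi-continuity theorems do not apply verbatim.
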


\begin{proof}
Let $\EE$ be the kernel giving the projection functor $\DB(\X) \longrightarrow \D$. The hypothesis $\OO_{\X_0} = \OO_X \in \D_0$ can be translated as:
\begin{equation*}
 \RR {p_0}_* (\LL q_0^* \OO_{X} \otimes {}^{\perp} \EE_0) = 0,
\end{equation*}
that is:
\begin{equation*}
 \RR {p_0}_* ( {}^{\perp} \EE_0) = 0.
\end{equation*}
By semi-continuity, there exists an open neighborhood $0 \in U \subset B$ such that:

\begin{equation*}
 \RR {p_b}_* ( {}^{\perp} \EE_b) = 0,
\end{equation*}
for all $b \in U$. Thus, we have $\OO_{\X_b} \in \D_b$, for all $b \in U$.
\end{proof}
\bigskip

The very same proof also yields:

\begin{lem} \label{defpoints}
Let $X$ be a smooth projective variety, let $\T \subset \DB(X)$ be a full admissible subcategory and $B$ a smooth algebraic variety. Let $\D$ be a smooth deformation of $\T$ over $B$ with respect to $\pi : \X \longrightarrow B$. Assume that $\CC(x) \in \T$, for generic $x \in X$. Then, there exists an open $0 \in U \subset B$, such that $\CC(x_b) \in \D_b$, for generic $x_b \in \X_b$ and for all $b \in U$. 
\end{lem}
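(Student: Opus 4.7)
The plan is to mimic the proof of Lemma \ref{defstructural}, translating the condition $\CC(x)\in\T$ into a vanishing statement involving the kernel ${}^\perp\EE$; the quantifier ``for generic $x$'' will then be dealt with by a support-theoretic argument exploiting the openness of $\pi$.

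As in the previous proof, Corollary \ref{kernel} characterises membership of skyscrapers: for $b\in B$ and $y\in\X_b$, one has $\CC(y)\in\D_b$ iff $\RR p_{b,*}(\LL q_b^*\CC(y)\otimes{}^\perp\EE_b)=0$. Since $p_b$ restricts to the identity on $\X_b\times\{y\}$, this is the same as $\LL j_y^*{}^\perp\EE=0$, where $j_y:\X_b\times\{y\}\hookrightarrow\X\times_B\X$ is the natural inclusion. Both $\pi$ and $B$ are smooth, hence so is $\X\times_B\X$, and ${}^\perp\EE$ is therefore a perfect complex; for such complexes on a Noetherian scheme the locus where the derived restriction vanishes is exactly the complement of the set-theoretic support. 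Setting $Z:=\mathrm{Supp}({}^\perp\EE)\subset\X\times_B\X$ and $W:=q(Z)\subset\X$, where $q:\X\times_B\X\to\X$ is the second projection (proper as a base-change of the projective morphism $\pi$), the condition $\CC(y)\in\D_{\pi(y)}$ rewrites as the purely geometric statement $y\notin W$.

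The hypothesis on $\T$ then becomes $X_0\not\subset W$, i.e.\ the open set $\X\setminus W$ meets the central fibre. Since $\pi$ is smooth it is an open morphism, so $U:=\pi(\X\setminus W)$ is an open subset of $B$ containing $0$. For each $b\in U$ the complement $\X_b\setminus W$ is a non-empty open subset of $\X_b$, and it is dense as soon as the fibres of $\pi$ remain irreducible over $U$; this is automatic after possibly shrinking $U$, because $h^0(\OO_{\X_b})$ is locally constant on $B$ for a smooth proper family, so connectedness (hence, combined with smoothness, irreducibility) of the central fibre propagates to nearby fibres. One concludes that $\CC(x_b)\in\D_b$ for generic $x_b\in\X_b$ and all $b\in U$.

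The main obstacle I anticipate is the translation of the derived-categorical vanishing $\LL j_y^*{}^\perp\EE=0$ into the set-theoretic disjointness $y\notin W$: this uses crucially that ${}^\perp\EE$ is a perfect complex (via the smoothness of the total space $\X\times_B\X$), and it is exactly this geometric reformulation that makes the openness of $\pi$ directly applicable.
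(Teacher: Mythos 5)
Your proof is correct. The paper gives no separate argument for this lemma: it only says that ``the very same proof'' as for Lemma \ref{defstructural} applies, and that earlier proof translates membership of $\OO_X$ into the vanishing $\RR {p_0}_*({}^{\perp}\EE_0)=0$ and then invokes semi-continuity over $B$. Your argument follows the same skeleton --- use Corollary \ref{kernel} to characterise membership as vanishing of the projection with kernel ${}^{\perp}\EE$, then spread out from the central fibre --- but your spreading-out mechanism is different and, for this statement, genuinely needed: because of the extra quantifier ``for generic $x_b\in\X_b$'', the fibrewise semi-continuity used for Lemma \ref{defstructural} does not apply verbatim, whereas your reformulation of $\CC(y)\in\D_{\pi(y)}$ as the set-theoretic condition $y\notin W$, with $W=q\left(\mathrm{Supp}({}^{\perp}\EE)\right)$, turns the problem into one about closed subsets (legitimate since $\X\times_B\X$ is smooth, so ${}^{\perp}\EE$ is perfect and its derived restriction to a slice $\X_b\times\{y\}$ vanishes exactly when the slice misses the support, and $W$ is closed by properness of $q$); this is precisely what makes the openness of the smooth morphism $\pi$ usable. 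Two minor points: your identification $\RR {p_b}_*(\LL q_b^*\CC(y)\otimes{}^{\perp}\EE_b)\simeq \LL j_y^*({}^{\perp}\EE)$ tacitly uses the projection formula together with ${}^{\perp}\EE_b=\LL j_b^*({}^{\perp}\EE)$, both available here; and the density of $\X_b\setminus W$ indeed requires irreducibility of the fibres, which follows from smoothness plus connectedness as you say --- note that since the paper's definition of a smooth deformation already requires $B$ connected, connectedness of all fibres holds globally and no shrinking of $U$ is actually necessary for that step.
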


We now state our first result on the deformation theory of hyper-Kähler categories. It shows that a ``small deformation'' of a hyper-Kähler category is still hyper-Kähler.

\begin{prop} \label{smalldef}
Let $X$ be a smooth projective variety, let $\T \subset \DB(X)$ be a full admissible subcategory and $B$ a smooth algebraic variety. Let $\D$ be a smooth deformation of $\T$ over $B$ with respect to $\pi : \X \longrightarrow B$. Assume that $\OO_{X} \in \T$ and that $\T$ is a compact hyper-K\"ahler category (with respect to its embedding in $\DB(X)$). Then, there exists a neighborhood $0 \in U \subset B$, such that $\D_b$ is compact hyper-K\"ahler (with respect to its embedding in $\DB(\X_b)$) for all $b \in U$.
\end{prop}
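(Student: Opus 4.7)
The plan is to verify each defining property of a compact hyper-K\"ahler category for $\D_b$ with $b$ near $0$ in turn, after shrinking $B$ appropriately. First I would apply Lemma \ref{defstructural} to pass to an open neighborhood $U$ of $0$ on which $\OO_{\X_b} \in \D_b$. Since $\D_b$ is an admissible subcategory of $\DB(\X_b)$ and $\X_b$ is smooth projective, the smoothness, compactness, regularity and closure under direct summands of $\D_b$ are automatic from the results of Kontsevich and Orlov recalled in \cite{kontse, orlov-nonco}. What remains is to produce the Calabi-Yau property of dimension $2m$ and the unique homological unit $\mathbb{C}[t]/(t^{m+1})$.

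For the homological unit, since $\OO_{\X_b} \in \D_b$, the trace-map argument already used in the proof of Theorem \ref{hk-crepant} shows that $H^{\bullet}(\OO_{\X_b})$ is a homological unit for $\D_b$. Any candidate $\mathfrak{T}^{\bullet}$ must inject as graded vector spaces into $\HHH^{\bullet}(\OO_{\X_b},\OO_{\X_b}) = H^{\bullet}(\OO_{\X_b})$, which gives uniqueness. It remains to identify $H^{\bullet}(\OO_{\X_b})$ with $\mathbb{C}[t]/(t^{m+1})$. Since $\pi:\X\to B$ is smooth projective, the sheaves $R^{q}\pi_*\OO_\X$ are locally free and the Hodge numbers $h^{0,q}(\X_b)$ are constant in $b$; thus $R^{2k}\pi_*\OO_\X$ is a line bundle for $0 \le k \le m$ and the other $R^{q}\pi_*\OO_\X$ vanish on some open neighborhood of $0$. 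Choosing a local trivialization $s$ of $R^{2}\pi_*\OO_\X$, multiplication by $s$ is a morphism of line bundles $R^{2k}\pi_*\OO_\X \to R^{2k+2}\pi_*\OO_\X$ which is an isomorphism at $b=0$ by the hypothesis on $\T$, hence remains one on a Zariski open neighborhood of $0$ by lower semi-continuity of rank. Shrinking $U$ accordingly gives $H^{\bullet}(\OO_{\X_b}) \simeq \mathbb{C}[t]/(t^{m+1})$ as graded algebras, with $t$ in degree $2$, for every $b \in U$.

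The main obstacle is the Calabi-Yau property. Using Kuznetsov's relative Serre functor formalism for the $B$-linear admissible subcategory $\D \subset \DB(\X)$, one obtains a relative Serre functor $S_{\D/B}$ represented by a kernel $K_S \in \DB(\X \times_B \X)$ whose fiberwise restriction $K_S|_{\X_b \times \X_b}$ represents the absolute Serre functor $S_{\D_b}$ (because relative and absolute Serre functors agree on fibers in a smooth projective family). The hypothesis on $\T = \D_0$ provides an isomorphism $\alpha_0 : K_S|_{\X_0 \times \X_0} \xrightarrow{\sim} \EE_0[2m]$, where $\EE$ is the kernel of the projection $\DB(\X) \to \D$. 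To propagate this isomorphism, I would represent the bounded complex $\mathcal{G} := \RR (\mathrm{pr}_B)_* \RR \Hh(K_S, \EE[2m])$ on $B$ by a bounded complex of vector bundles $E^{\bullet}$ (shrinking $B$), lift the class of $\alpha_0$ to a local section of the kernel of $d^0 : E^0 \to E^1$ near $0$, and thereby produce a morphism $\alpha : K_S \to \EE[2m]$ on some open $U' \subset U$. The cone of $\alpha$ has vanishing restriction to $\X_0 \times \X_0$, so upper semi-continuity of the supports of its cohomology sheaves forces $\alpha$ to be a fiberwise isomorphism on a (possibly smaller) open neighborhood of $0$, giving $S_{\D_b} \simeq [2m]$ on $\D_b$. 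Combined with the first two paragraphs, this shows that $\D_b$ is a compact hyper-K\"ahler category for all $b$ in a common open neighborhood of $0$.
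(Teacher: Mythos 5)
Your proposal follows the same skeleton as the paper's proof: Lemma \ref{defstructural} to get $\OO_{\X_b} \in \D_b$ near $0$, the automatic smoothness, compactness and regularity of admissible subcategories of $\DB(\X_b)$, identification of the homological unit with $H^{\bullet}(\OO_{\X_b})$, and a spreading-out argument for the Calabi--Yau property. The differences lie in how the two non-trivial steps are implemented. For the homological unit, the paper simply asserts that $H^{\bullet}(\OO_{\X_b}) \simeq \mathbb{C}[t]/(t^{m+1})$ on a neighborhood of $0$; your argument with the locally free sheaves $R^{2k}\pi_*\OO_{\X}$ and the relative multiplication maps $R^{2k}\pi_*\OO_{\X} \to R^{2k+2}\pi_*\OO_{\X}$, isomorphisms at $0$ hence on an open set, supplies the detail that the \emph{algebra} structure (not just the dimensions) is locally constant --- a genuine improvement in precision. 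For the Calabi--Yau step, the paper does not pass through a relative Serre functor kernel: it encodes Serre duality directly in the projection kernel, producing from the hypothesis on $\T$ a quasi-isomorphism $\theta_0 : \EE_0 \to \EE_0 \otimes q_0^*\omega_{\X_0}[\dim \X_0 - 2m]$ and extending it over a neighborhood ``by Nakayama's lemma''. Your route through Kuznetsov's relative Serre functor requires the additional, asserted but unproved, compatibility that $K_S|_{\X_b \times \X_b}$ represents the absolute Serre functor of $\D_b$; the paper's formulation sidesteps this because $\EE_b \otimes q_b^*\omega_{\X_b}[\dim \X_b]$ is visibly the kernel of $\mathrm{pr}_{\D_b} \circ S_{\X_b}$, whose restriction to $\D_b$ is $S_{\D_b}$ by the standard description of Serre functors of admissible subcategories.

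One caveat, which applies equally to the paper's own proof: the delicate point is the \emph{existence} of an extension of the fiber isomorphism, and your explicit mechanism exposes it. A class in $H^0$ of the derived restriction $\mathcal{G} \otimes^{\LL} \mathbb{C}(0)$ need not lift to a local section of $\ker(d^0 : E^0 \to E^1)$: cohomology of a complex of vector bundles does not commute with base change, and the obstruction to lifting lives in $\mathrm{Tor}_1^{\OO_B}\bigl(\mathcal{H}^1(\mathcal{G}), \mathbb{C}(0)\bigr)$, which has no reason to vanish and does not disappear by shrinking $B$, since it only depends on the stalk at $0$. Once some extension $\alpha$ exists, your cone-and-semicontinuity conclusion is sound --- it is exactly the paper's Nakayama step, applied inductively to the cohomology sheaves of the cone. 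But note that the paper's ``$\theta_0$ can be lifted \dots\ by Nakayama's lemma'' suffers from the same imprecision at this exact point: Nakayama yields that an extension \emph{remains} a quasi-isomorphism near the central fiber, not that an extension exists. So your argument is at the same level of rigor as the paper's, with the existence of the lift being the step that both treatments leave unjustified.
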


\begin{proof}
Let $ \pi : \X \rightarrow B$ be the smooth projective morphism in which the deformation $\D$ is embedded. We know that $\D_0 = \T$ is compact hyper-K\"ahler of dimension $2m$ (with respect to its embedding in $\DB(X)$). In particular, the category $\T$ is Calabi-Yau of dimension $2m$. Hence there exists a quasi-isomorphism:

\begin{equation*}
\theta_0 : \EE_0 \rightarrow \EE_0 \otimes q_0^* \omega_{\X_0}[\dim X_0-2m].
\end{equation*} 

But $\EE_0 = \EE \otimes_{\X \times_B \X}\OO_{\X_0 \times \X_0}$ and $\omega_{\X_0} = \omega_{\X/B} \otimes_{\X \times_B \X}\OO_{\X_0 \times \X_0}$. Hence, by Nakayama's lemma, there exists a neighborhood $0 \in U \subset B$, such that $\theta_0$ can be lifted to a quasi-isomorphism:

\begin{equation*}
\theta_U : \EE \otimes_{\X \times_B \X}\OO_{U} \rightarrow \EE \otimes_{\X \times_B \X} q^* \omega_{\X_U/U}[\dim(\X_U/U)-2m].
\end{equation*}
This proves that the categories $\D_b, b \in U$ are Calabi-Yau of dimension $2m$. Since $X_b$ is smooth projective for all $b \in B$, the categories $\D_b$ are also smooth, compact and regular for all $b \in B$. It remains to prove (up to shrinking $U$), that $\mathbb{C}[t]/(t^{m+1})$ (with $t$ homogeneous of degree $2$) is a homological unit for $\D_b, b \in U$. 

\bigskip

We know by hypothesis that $\T$ contains $\OO_{X}$. Hence, by lemma  \ref{defstructural}, there exists an open $0 \in U' \subset U$ such that the categories $\D_b, b \in U'$ all contain $\OO_{\X_b}$. We deduce that for all $b \in U'$, the graded algebra $H^{\bullet}(\OO_{\X_b})$ is a homological unit for $\D_b$. But $H^{\bullet}(\OO_{X_0}) \simeq \mathbb{C}[t]/(t^{m+1})$. Hence there exists another neighborhood $0 \in U'' \subset U'$ such that $H^{\bullet}(\OO_{X_b}) \simeq \mathbb{C}[t]/(t^{m+1})$, for all $b \in U''$. The open $U''$ is the neighborhood of $0 \in B$ we are looking for.

\end{proof}

The above statement shows that being compact hyper-K\"ahler is an open condition (if one assumes that $\OO_{\X} \in \D$). I also expect it to be a closed condition. Namely:

\begin{conj} \label{def-HK}
Let $X$ be a smooth projective variety, let $\T \subset \DB(X)$ be a full admissible subcategory and $B$ a smooth algebraic variety. Let $\D$ be a deformation of $\T$ over $B$. Assume that $\D_b$ is compact hyper-K\"ahler for all $b \neq 0$. Then, the category $\D_0 = \T$ is compact hyper-K\"ahler.
\end{conj}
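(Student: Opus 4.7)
The plan is to verify at $b = 0$ the three defining properties of a compact hyper-K\"ahler category: smoothness, compactness and regularity; the Calabi-Yau property of dimension $2m$; and uniqueness of a homological unit isomorphic to $\mathbb{C}[t]/(t^{m+1})$. The first triple is immediate, since $\D_0 = \T$ is an admissible subcategory of $\DB(X)$ with $X$ smooth projective. The genuine content sits in the two remaining conditions, which I would address in that order, working throughout under the extra mild hypothesis $\OO_{\X} \in \D$ already present in Proposition \ref{smalldef}.

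For the Calabi-Yau step I would leverage the kernel description from Corollary \ref{kernel}. For $b \neq 0$, the hypothesis gives quasi-isomorphisms
\begin{equation*}
\EE_b \simeq \EE_b \otimes q_b^* \omega_{\X_b}[\dim \X_b - 2m],
\end{equation*}
which I would package as the generic non-vanishing of a section of the perfect complex $\RR \pi^{(2)}_* \RR \Hh(\EE, \EE \otimes q^* \omega_{\X/B}[\dim(\X/B) - 2m])$ on $B$, where $\pi^{(2)} : \X \times_B \X \to B$. Properness of $\pi^{(2)}$ and local freeness of the relevant cohomology sheaves let one extend such a section across $b = 0$; the remaining question is whether the specialised section is still a quasi-isomorphism. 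My best candidate input is Theorem \ref{semiconti} together with its expected analogue for Hochschild cohomology, which should pin down the relevant $\mathrm{Ext}$-dimensions and leave no room for a rank drop of the Serre trace at $0$.

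For the homological unit, I would first propagate $\OO_{\X_b} \in \D_b$ (known for $b \neq 0$) across $b = 0$. The condition $\OO_{\X_b} \in \D_b$ translates into the vanishing of $\RR p_{b*}({}^{\perp} \EE_b)$; since ${}^{\perp} \EE$ is a perfect complex on $\X \times_B \X$ and $p$ is proper and flat, $\RR p_*({}^{\perp}\EE)$ is a perfect complex on $\X$ whose fibres vanish on a dense open of $\X$, hence vanish everywhere. Once $\OO_{\X_0} \in \D_0$, this object is unitary, so $H^{\bullet}(\OO_{\X_0})$ is a homological unit for $\D_0$ just as in the proof of Theorem \ref{hk-crepant}. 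Constancy of Hodge numbers under smooth proper deformation fixes the graded dimensions of $H^{\bullet}(\OO_{\X_0})$, and the relation $t^{m+1} = 0$ together with $t^m \neq 0$ for the degree-$2$ generator follows from the $\OO_B$-linearity of the cup-product on $R^{\bullet}\pi_*\OO_\X$ and the non-degeneration of the top-power section at $b = 0$.

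The principal obstacle, as the author flags, is the Calabi-Yau step: showing that this is a closed condition under smooth deformation. Semi-continuity forbids the relevant $\mathrm{Hom}$-spaces from disappearing, yet does not by itself exclude a specialised quasi-isomorphism degenerating to a non-invertible morphism. In the commutative setting the analogous statement requires holonomy input via the Bogomolov decomposition, and I expect that any categorical proof will likewise demand a genuinely new tool -- perhaps a deformation-theoretic refinement of Kuznetsov's relative Serre-functor formalism, or a Hochschild-cohomological incarnation of Verbitsky's $\HH^{\langle 2 \rangle}$-theorem quoted in the introduction -- rather than a soft base-change argument. Absent such an input, my proof would remain conditional on the Calabi-Yau invariance step, with the remaining pieces above supplying the rest.
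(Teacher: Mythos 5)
You should first be clear about what you were measured against: the statement is Conjecture \ref{def-HK}, which the paper explicitly leaves \emph{open}. The author notes that even the commutative specialization is only known via holonomy arguments, isolates the Calabi-Yau part as the separate open Conjecture \ref{def-CY}, and proves only the partial result Proposition \ref{def-hk4} (dimension $4$, with the added hypotheses that $\T$ is already Calabi-Yau and that $\OO_X \in \T$ at the \emph{special} point). So there is no proof in the paper to match, and your proposal — by your own admission conditional on the Calabi-Yau step — is not one either; on that point your assessment agrees with the paper's.

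Beyond the admitted conditionality, however, the unconditional parts of your argument contain two genuine errors. First, your propagation of $\OO_{\X_b} \in \D_b$ across $b = 0$ rests on the claim that a perfect complex whose derived fibres vanish on a dense open subset vanishes everywhere. This is false: vanishing of derived fibres of a perfect complex is an \emph{open} condition (its failure locus is the support), not a closed one. For instance $\RR {i_0}_* \OO_{\X_0}$ is a perfect complex on $\X$ whose derived fibres vanish at every point off $\X_0$, yet it is nonzero; your argument, applied to it, would prove it is zero. The conclusion you want is salvageable, but by a different route: your standing hypothesis $\OO_{\X} \in \D$ together with $B$-linearity gives $\RR {i_b}_* \OO_{\X_b} \simeq \OO_{\X} \otimes \LL \pi^* \CC(b) \in \D$ for every $b$, and then the base-change identities from the proof of Proposition \ref{defdecompo} plus conservativity of $\RR {i_b}_*$ yield $\OO_{\X_b} \in \D_b$ for all $b$, including $b = 0$. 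Second, and more seriously, your derivation of the ring structure invokes ``the non-degeneration of the top-power section at $b=0$'', which is precisely what must be proved. The cup product is a section of the line bundle $\Hom(\sym^m R^2\pi_*\OO_{\X}, R^{2m}\pi_*\OO_{\X})$; it is nonvanishing on $B \setminus \{0\}$, but nothing prevents it from vanishing at $0$, exactly as a nonzero section of a line bundle on a curve can vanish at a point. This is the same degeneration phenomenon you correctly identify as the obstruction in the Calabi-Yau step, and it is the real content of the commutative theorem, where holonomy input is needed precisely to rule out degeneration of $\sigma^{\wedge m}$ in the limit. The paper closes this gap only in dimension $4$ (Proposition \ref{def-hk4}), where Serre duality in $\D_0$ makes the pairing $H^2(\OO_{\X_0}) \times H^2(\OO_{\X_0}) \rightarrow H^4(\OO_{\X_0})$ non-degenerate; for $m > 2$ Serre duality pairs $H^2$ with $H^{2m-2}$ and gives no control over $t^2$, which is exactly why the author says the higher-dimensional case does not follow. (Likewise, the ``expected analogue for Hochschild cohomology'' of Theorem \ref{semiconti} that you lean on in the Calabi-Yau step is not available in the paper.) So even granting Conjecture \ref{def-CY}, your proposal would remain incomplete in all dimensions $2m$ with $m > 2$.
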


The commutative specialization of this result is well-known. Namely, let $\pi : \X \rightarrow B$ be a smooth projective morphism with $B$ smooth. If $\X_b$ is compact hyper-K\"ahler for all $b \neq 0$, then $\X_0$ is also compact hyper-K\"ahler. It is usually proved using the holonomy principle and the invariance of holonomy groups in smooth families (see \cite{huy2}, section 1). As far as I am aware, there are no algebraic proof of this result. Hence, a proof of conjecture \ref{def-HK}, would certainly require the design of interesting new categorical techniques.

Two key results are to be proved in order to demonstrate conjecture \ref{def-HK} : the invariance of the Calabi-Yau condition and of the homological unit under smooth deformations.

\begin{conj} \label{def-CY}
Let $X$ be a smooth projective variety, let $\T \subset \DB(X)$ be a full admissible subcategory and $B$ a smooth algebraic variety. Let $\D$ be a deformation of $\T$ over $B$. Assume that $\D_b $ is Calabi-Yau of dimension $r$ for all $b \neq 0$. Then, the category $\D_0 = \T$ is Calabi-Yau of dimension $r$.
\end{conj}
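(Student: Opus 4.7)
The plan is to reformulate the Calabi-Yau hypothesis as the existence of a quasi-isomorphism between two explicit kernels on the product $\X \times_B \X$, propagate this morphism from the dense open $B \setminus \{0\}$ to the point $0$ via semi-continuity, and then argue that the resulting morphism at the special fiber is actually a quasi-isomorphism. Recall from the proof of Proposition \ref{smalldef} that $\D_b$ is Calabi-Yau of dimension $r$ if and only if there exists a quasi-isomorphism
\[
\theta_b : \EE_b \longrightarrow \EE_b \otimes q_b^* \omega_{\X_b}[\dim \X_b - r]
\]
in $\DB(\X_b \times \X_b)$, where the shift accounts for the passage from the absolute Serre functor of $\DB(\X_b)$ to the Serre functor of $\D_b$.

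First I would introduce the complex
\[
\mathcal{K}^\bullet := \RR\pi_* \RR\Hh_{\X \times_B \X}\bigl(\EE,\ \EE \otimes q^* \omega_{\X/B}[\dim(\X/B) - r]\bigr)
\]
on $B$, where $\pi$ denotes the structural morphism $\X \times_B \X \to B$. Since $B$ is smooth and $\pi$ is projective, $\mathcal{K}^\bullet$ can be represented by a bounded complex of vector bundles on $B$, and flat base change identifies its fiber at $b$ with $\HHH^{\bullet}(\EE_b, \EE_b \otimes q_b^* \omega_{\X_b}[\dim \X_b - r])$. The hypothesis yields $H^0(\mathcal{K}^\bullet|_b) \neq 0$ for every $b \neq 0$, and the upper semi-continuity argument for a bounded complex of vector bundles on a smooth variety (exactly as in the proof of Theorem \ref{semiconti}) forces $H^0(\mathcal{K}^\bullet|_0) \neq 0$, producing a non-zero morphism $\theta_0 : \EE_0 \to \EE_0 \otimes q_0^* \omega_{\X_0}[\dim \X_0 - r]$.

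The hard part will be to upgrade $\theta_0$ from merely non-zero to an actual quasi-isomorphism. The strategy I would attempt is to work on a neighborhood $0 \in U \subset B$ on which $\dim H^0(\mathcal{K}^\bullet|_b)$ is constant; after shrinking $U$, the sheaf $\mathcal{H}^0(\mathcal{K}^\bullet)$ is then locally free, and any local frame $\tilde\theta$ restricts for each $b \in U \setminus \{0\}$ to a non-zero scalar multiple of $\theta_b$, hence to a quasi-isomorphism, while its restriction at $0$ is a non-zero scalar multiple of $\theta_0$. The cone $\mathcal{C} := \mathrm{Cone}(\tilde\theta)$ is then a bounded complex of coherent sheaves on $\X \times_B \X$ whose fiber at each $b \in U \setminus \{0\}$ vanishes, so its support is contained in $\X_0 \times \X_0$; a Nakayama-type vanishing argument along a smooth curve through $0$ should then force $\mathcal{C}|_{\X_0 \times \X_0} = 0$ and yield the desired result.

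The main obstacle is precisely the possibility that $\dim H^0(\mathcal{K}^\bullet|_b)$ strictly jumps at $b = 0$: in that case the extra sections appearing only at the special fiber need not contain a quasi-isomorphism, and no generic $\theta_b$ can be extended as a flat section of $\mathcal{H}^0(\mathcal{K}^\bullet)$ near $0$. Ruling out such a jump essentially amounts to a base-change compatibility for the relative Serre functor of $\D$ over $B$, which in the commutative setting is handled through the invariance of holonomy groups in smooth families rather than by algebraic means (see the remark just after Conjecture \ref{def-HK}). A genuinely algebraic proof of Conjecture \ref{def-CY} will therefore very likely require a new purely categorical substitute for these holonomy techniques, which is why the author records it as a conjecture rather than a theorem.
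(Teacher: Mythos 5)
This statement is Conjecture \ref{def-CY}: the paper offers no proof of it, and indeed remarks immediately after stating it (citing Keller's work on deformations of Calabi-Yau algebras) that it is very unlikely to be provable by abstract algebraic arguments alone. So there is no proof in the paper to compare against; the only question is whether your attempt settles the conjecture, and it does not --- as you yourself concede in your closing paragraph. Your first two steps are sound: since $\X\times_B\X$ is smooth, $\EE$ is perfect, $\mathcal{K}^{\bullet}$ is representable by a bounded complex of vector bundles on $B$, and the semi-continuity argument of Theorem \ref{semiconti} does produce a non-zero $\theta_0 \in \Hom(\EE_0, \EE_0\otimes q_0^*\omega_{\X_0}[\dim \X_0 - r])$. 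Everything after that, however, breaks down, and the gap is strictly worse than the jumping phenomenon you single out.

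Even on a neighborhood where $\dim H^0(\mathcal{K}^{\bullet}|_b)$ is constant, your mechanism fails twice. First, a local frame of $\mathcal{H}^0(\mathcal{K}^{\bullet})$ restricts at $b\neq 0$ to a scalar multiple of $\theta_b$ only if that sheaf has rank $1$, which you never establish. Second, and fatally, your concluding step runs backwards: Nakayama's lemma applied to the top non-vanishing cohomology sheaf of $\mathcal{C}=\mathrm{Cone}(\tilde\theta)$ shows that if $\mathcal{C}\neq 0$ and its cohomology sheaves are supported on $\X_0\times\X_0$, then the derived restriction $\LL j_0^*\mathcal{C}$ is \emph{non-zero}, i.e.\ $\tilde\theta|_0$ is \emph{not} a quasi-isomorphism; vanishing of the fibers of $\mathcal{C}$ at $b\neq 0$ can never force $\LL j_0^*\mathcal{C}=0$ unless $\mathcal{C}=0$ identically, which is what one is trying to prove. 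A toy example shows the whole strategy of taking limits of quasi-isomorphisms is hopeless: for $\EE=\OO^{\oplus 2}$ on $Y\times\mathbb{A}^1 \to \mathbb{A}^1$, the morphism $\mathrm{diag}(1,t)$ is a quasi-isomorphism for every $t\neq 0$ and is a non-zero vector in the constant, rank-$4$ Hom space at $t=0$, yet its value at $t=0$ is not a quasi-isomorphism. Thus ``non-zero limit of quasi-isomorphisms'' yields nothing; what the conjecture requires is that \emph{some} element of $\Hom(\EE_0,\EE_0\otimes q_0^*\omega_{\X_0}[\dim\X_0-r])$ be a quasi-isomorphism, i.e.\ that the closed cone of non-quasi-isomorphisms in the total space of $\mathcal{H}^0(\mathcal{K}^{\bullet})$ not contain the entire fiber over $0$ --- and that is essentially a restatement of the conjecture, not a consequence of semi-continuity. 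This is precisely why the paper records the statement as a conjecture and insists (here and after Conjecture \ref{def-HK}) that a proof will require genuinely new geometrico-categorical techniques, playing the role that holonomy arguments play in the commutative case.
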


Note that it is very unlikely that this conjecture can be proved by abstract algebraic arguments. Indeed, the work of Keller (\cite{keller}) suggests that strong additional hypotheses are usually used in order to prove that a deformation of a Calabi-Yau algebra is again Calabi-Yau. Hence, the fact that the categories appearing in conjecture \ref{def-CY} are subcategories of derived categories of algebraic varieties will certainly play an important role in a potential proof.

\bigskip

Let us conclude this section with a ``long-time'' deformation result for hyper-K\"ahler categories. It gives a partial answer to Conjecture \ref{def-HK} in the four-dimensional case.

\begin{prop} \label{def-hk4}
Let $X$ be a smooth projective variety, let $\T \subset \DB(X)$ be a full admissible subcategory which is Calabi-Yau of dimension $4$ and let $B$ a smooth algebraic variety. Let $\D$ be a smooth deformation of $\T$ over $B$ with respect to $\pi : \X \rightarrow B$. Assume that $\OO_{X} \in \T$ and that for all $b \neq 0$, the category $\D_b$ is compact hyper-K\"ahler of dimension $4$ (with respect to its embedding in $\DB(\X_b)$). Then, the category $\D_0 = \T$ is compact hyper-K\"ahler of dimension $4$ (with respect to its embedding in $\DB(\X_0)$).
\end{prop}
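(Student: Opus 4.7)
The plan is to verify the three defining conditions of a compact hyper-K\"ahler category of dimension $4$ for $\T = \D_0$: smoothness, compactness and regularity; the Calabi-Yau property of dimension $4$; and the existence of a unique homological unit isomorphic to $\mathbb{C}[t]/(t^3)$ with $t$ of degree $2$. The first two come for free, as $\T$ is an admissible subcategory of $\DB(X)$ with $X$ smooth projective, so $\T$ is smooth, compact and regular, and the Calabi-Yau property is built into the hypothesis. All the real content is in identifying the homological unit.

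First I would apply Lemma \ref{defstructural} to shrink $B$ to a neighborhood $U$ of $0$ on which $\OO_{\X_b} \in \D_b$ for every $b \in U$. Combined with $\OO_X \in \T$, this identifies the homological unit of each $\D_b$, $b \in U$, as $H^{\bullet}(\OO_{\X_b})$: the trace gives a split injection $H^{\bullet}(\OO_{\X_b}) \hookrightarrow \HHH^{\bullet}(\F,\F)$ for any $\F \in \D_b$ of non-zero rank, while the identity $\HHH^{\bullet}(\OO_{\X_b},\OO_{\X_b}) = H^{\bullet}(\OO_{\X_b})$ caps it from above. I would then invoke the classical invariance of Hodge numbers in smooth projective families to conclude that $\dim H^i(\OO_{\X_b})$ is locally constant on $U$. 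For $b \in U \setminus \{0\}$ the hyper-K\"ahler hypothesis forces $H^{\bullet}(\OO_{\X_b}) \simeq \mathbb{C}[t]/(t^3)$, hence the Hodge numbers $h^{0,i}(\X_b) = (1,0,1,0,1)$, and the same then holds at $b = 0$; so $H^{\bullet}(\OO_X)$ has the correct graded dimensions.

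The crux is to show that the cup product $H^2(\OO_X) \otimes H^2(\OO_X) \to H^4(\OO_X)$ is non-zero, so that a generator $t$ of $H^2$ satisfies $t^2 \neq 0$ and $H^{\bullet}(\OO_X) \simeq \mathbb{C}[t]/(t^3)$ as graded algebras. I would deliberately avoid a semicontinuity argument over $B$: a map of line bundles on $B$ that is an isomorphism on $B \setminus \{0\}$ can still vanish at $0$ when $\dim B = 1$, so that approach is fragile. Instead the non-vanishing should be extracted from the Calabi-Yau structure on $\T$ itself. Serre duality in $\T$ provides a natural non-degenerate pairing
\[ H^i(\OO_X) \otimes H^{4-i}(\OO_X) \longrightarrow \mathbb{C}, \qquad (\alpha,\beta) \longmapsto \mathrm{Tr}(\alpha \cup \beta), \]
where $\mathrm{Tr} \colon H^4(\OO_X) \to \mathbb{C}$ is the Calabi-Yau trace and $\alpha \cup \beta = \beta[i] \circ \alpha$ is precisely the cup product in $H^{\bullet}(\OO_X)$. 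Taking $i=2$: if the cup product $H^2 \cup H^2 \to H^4$ vanished, the pairing would be identically zero, contradicting non-degeneracy since $H^2(\OO_X) \neq 0$. Hence $t^2 \neq 0$ and generates $H^4(\OO_X)$.

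The hard part is therefore the unravelling identifying the abstract Serre pairing on $\T$ with the concrete composition/cup-product pairing on $H^{\bullet}(\OO_X)$. This uses the Calabi-Yau isomorphism $S_\T \simeq [4]$ applied to the pair $(\OO_X, \OO_X)$ and requires some care, since the Serre functor of $\T$ need not agree with the restriction to $\T$ of the Serre functor of $\DB(X)$ (the latter involves $K_X$, which is not a priori trivial). Once the identification is in place, the non-degeneracy of the Calabi-Yau pairing forces the middle cup product to be non-zero, and no deformation semicontinuity or holonomy argument is needed; this is the categorical counterpart of the holonomy-based ``closedness'' of the hyper-K\"ahler condition in smooth families of manifolds.
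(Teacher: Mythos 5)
Your proposal is correct and follows essentially the same route as the paper: Lemma \ref{defstructural} to put $\OO_{\X_b}$ in nearby fibers and identify the homological unit as $H^{\bullet}(\OO_{\X_b})$, invariance of Hodge numbers in smooth families to pin down the graded vector space structure at $b=0$, and non-degeneracy of the Serre-duality pairing on the Calabi-Yau category $\D_0$ to identify the Yoneda pairing $H^2 \times H^2 \to H^4$ as non-zero, yielding $H^{\bullet}(\OO_X) \simeq \mathbb{C}[t]/(t^3)$ as graded algebras. The only difference is that you explicitly flag the identification of the abstract Serre pairing on $\T$ with the cup-product pairing as a step requiring care, whereas the paper asserts this coincidence without further comment.
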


\begin{proof}
We already know that $\T$ is smooth, compact, regular and Calabi-Yau of dimension $4$. Since $\OO_{X} \in \T$, lemma \ref{defstructural} implies that there exists an open subset $0 \in U \subset B$ such that $\OO_{\X_b} \in \D_b$, for all $b \in U$. Hence, for all $b \in U$, the algebra $H^{\bullet}(\OO_{X_b})$ is a homological unit for $\D_b$. By hypothesis, for all $b \neq 0 \in B$, there exists a unique homological unit for $\D_b$ (with respect to its embedding in $\DB(\X_b)$) which is $\mathbb{C}[t]/(t^3)$, with $t$ in degree $2$. As a consequence, for all $b \neq 0 \in U$, we have:

\begin{equation*}
H^{\bullet}(\OO_{\X_b}) = \mathbb{C}[t]/{t^3},
\end{equation*}
with $t$ in degree $2$. Hodge numbers are invariant in smooth family, so that $H^{\bullet}(\OO_{\X_0}) \simeq \mathbb{C}[t]/(t^3)$ as a graded vector space. But the category $\D_0$ is Calabi-Yau of dimension $4$, hence the pairing:

\begin{equation*}
H^{2}(\OO_{\X_0}) \times H^{2}(\OO_{\X_0}) \rightarrow H^{4}(\OO_{\X_0}) \simeq \mathbb{C} 
\end{equation*}
given by the Yoneda product coincide with the Serre-duality pairing : it is non degenerate. As $\dim H^2(\OO_{X_0}) = 1$, we find an isomorphism of graded algebras : $H^{\bullet}(\OO_{\X_0}) \simeq \mathbb{C}[t]/(t^3)$, with $t$ in degree $2$. This proves that $\D_0$ is compact hyper-K\"ahler (with respect to its embedding in $\DB(\X_0)$). 
\end{proof}

One would obviously like to generalize this result in higher dimension. However, the non-degeneracy of the Serre-duality pairing does not have so strong consequences in higher dimensions.
\end{subsection}

\end{section}

\begin{section}{Non-commutative Hilbert schemes}
 In this section, we will be interested in the following question that was asked to me by Misha Verbitsky:
 
 \begin{quest} \label{quest1}
 Let $S$ be a K3 surface and $n \geq 1$. For which subgroups $\GG \subset \mathcal{S}_n$ does the quotient $S \times \cdots \times S/ \GG$ has a categorical crepant resolution 
 of singularities which is a hyper-Kähler category?
 \end{quest}

The commutative version of this question has been solved by Verbitsky himself. Indeed, in \cite{verbi1} he proves the following:

\begin{theo} \label{verbi} Let $S$ be a K3 surface and $n \geq 1$. Let $\GG$ be a subgroup of $\mathcal{S}_n$ such that $S \times \cdots \times S/ \GG$ has a crepant resolution which is hyper-Kähler. Then $\GG = \mathcal{S}_n$ and the resolution is the Hilbert scheme of $n$ points on $S$.
\end{theo}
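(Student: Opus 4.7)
The plan is to extract two independent necessary conditions from the hypothesis that $S^n/\GG$ admits a hyper-K\"ahler crepant resolution and show that their conjunction forces $\GG = \mathcal{S}_n$. First I would exploit the condition $h^{2,0}(Y) = 1$. Since $S^n/\GG$ has symplectic (in particular rational Gorenstein) singularities and $\pi : Y \rightarrow S^n/\GG$ is crepant, pullback identifies $H^0(Y, \Omega^2_Y)$ with $H^0(S^n, \Omega^2_{S^n})^{\GG}$. By K\"unneth and $h^{2,0}(S) = 1$, the space $H^0(S^n, \Omega^2_{S^n}) \simeq \bigoplus_{i=1}^n \mathrm{pr}_i^* H^0(S, \Omega^2_S)$ carries the permutation representation of $\GG \subset \mathcal{S}_n$, so its invariants have dimension equal to the number of $\GG$-orbits on $\{1, \ldots, n\}$. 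Consequently $\GG$ must act transitively on $\{1, \ldots, n\}$.

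Next I would apply the Verbitsky--Kaledin criterion for the existence of a projective symplectic resolution of a linear symplectic quotient singularity. The product symplectic form makes $S^n/\GG$ a symplectic variety, and any crepant resolution is automatically symplectic since $\omega_{S^n/\GG}$ is trivial. Working locally at the small diagonal point $\delta = (s, \ldots, s) \in S^n$, whose stabilizer is the whole of $\GG$ and whose analytic germ is $(T_s S)^{\oplus n}/\GG$ with $\GG$ acting by permutation of the $n$ factors, the criterion demands that $\GG$ be generated by its symplectic reflections in $(T_s S)^{\oplus n}$. A direct cycle-type computation shows that the fixed subspace of $\sigma \in \mathcal{S}_n$ has complex codimension $2(n - c(\sigma))$, where $c(\sigma)$ is the number of cycles, so symplectic reflections are precisely the transpositions. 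Hence $\GG$ is generated by the transpositions it contains.

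A subgroup of $\mathcal{S}_n$ generated by transpositions is the direct product $\prod_j \mathcal{S}_{A_j}$ with $\{A_j\}$ the connected components of its transposition graph; combined with transitivity this forces a single component, whence $\GG = \mathcal{S}_n$. The quotient is then the symmetric product $S^{(n)}$, whose unique crepant resolution is the Hilbert scheme $S^{[n]}$ by Beauville. The simple connectedness of $Y$ is automatic, since every non-trivial element of $\mathcal{S}_n$ fixes $\delta$ and so $\pi_1(S^n/\GG) = 1$, which lifts through the resolution of rational singularities. The main obstacle in this strategy is the symplectic-reflection step: it rests essentially on the Verbitsky--Kaledin theorem for linear quotient singularities, and it is precisely the combination with transitivity that promotes the purely local information at $\delta$ into the global conclusion about $\GG$.
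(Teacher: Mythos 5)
Correct, and essentially the paper's own route: both arguments rest on Verbitsky's theorem that a symplectic resolution of $V /\!\!/ \GG$ forces $\GG \subset \mathrm{Sp}(V)$ to be generated by symplectic reflections (applied to the local model $(T_sS)^{\oplus n}/\GG$ at a small-diagonal point), the cycle-type computation showing those reflections are exactly the transpositions, and an irreducibility condition that upgrades this to $\GG = \mathcal{S}_n$. If anything, your handling of the last step is more precise than the paper's sketch: the paper only requires that every factor of $S \times \cdots \times S$ be moved by some element of $\GG$, which by itself is insufficient (e.g.\ $\mathcal{S}_2 \times \mathcal{S}_2 \subset \mathcal{S}_4$ is generated by transpositions and moves every factor), whereas your transitivity condition, extracted from $h^{2,0}(Y)=1$ via $\GG$-invariant $2$-forms, combined with the transposition-graph decomposition is exactly what forces a single block and hence $\GG = \mathcal{S}_n$.
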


Note that the result actually proved by Verbitsky in \cite{verbi1} is more general. He shows that if $V$ is a symplectic vector space and $\GG \subset \mathrm{S}p(V)$ is such that $V /\!\!/ \GG$ admits a symplectic resolution, then $\GG$ is generated by symplectic reflections. In the setting of Theorem \ref{verbi}, symplectic reflections are immediately seen to be transpositions. Furthermore, for the crepant resolution of $S \times \cdots \times S/ \GG$ to be irreducible symplectic, we need that each factor of $S \times \cdots S$ is acted on non-trivially by an element of $\GG$. This is easily demonstrates that $\GG = \mathcal{S}_n$.

We now state the answer to question \ref{quest1}:

\begin{theo} \label{nchilbert}
Let $S$ be a K3 surface and $n \geq 1$. Let $\GG \subset \mathcal{S}_n$ acting on $S \times \cdots \times S$ by permutations. The quotient $S \times \cdots \times S/ \GG $ admits a categorical crepant resolution which is a hyper-Kähler category if and only if $\GG$ is one of the following:

\begin{itemize}
\item $\GG = \mathcal{S}_n$,

\item $\GG = \mathcal{A}_n$  (the alternating group),

\item $n=5$ and $\GG = \mathbb{F}_5^*$,

\item $n= 6$ and $\GG = \mathbb{P}GL_2(\mathbb{F}_5)$, 

\item $n=9$ and $\GG = \mathbb{P}GL_2(\mathbb{F}_8)$,

\item $n=9$ and $\GG = \mathbb{P}GL_2(\mathbb{F}_8) \rtimes \mathrm{Gal}(\mathbb{F}_8/\mathbb{F}_2)$.

\end{itemize}
Furthermore, this categorical resolution is always non-commutative in the sense of Van-den-Bergh.
\end{theo}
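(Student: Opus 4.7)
The plan is to reduce the hyper-Kähler condition to an algebraic constraint on $H^{\bullet}(\OO_{S^n})^{\GG}$, and then to a purely combinatorial constraint on the action of $\GG$ on $\{1,\ldots,n\}$, which turns out to be a classical topic in permutation group theory.

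First I would note that any $\GG \subset \mathcal{S}_n$ preserves the holomorphic symplectic form $\sigma_1 + \cdots + \sigma_n$ on $S^n$ (and hence the holomorphic volume form), so $S^n/\GG$ is a projective variety with Gorenstein rational quotient singularities and trivial dualizing bundle. Theorem \ref{quotienttheo} then produces a categorical strongly crepant resolution $\DB(Coh^{\GG}(S^n))$ of $S^n/\GG$, realized as $\DB(S^n/\GG, \A)$ for a sheaf of non-commutative algebras $\A$; this gives at once a non-commutative crepant resolution in the sense of Van den Bergh. By Theorem \ref{hk-crepant}, this resolution is a compact hyper-Kähler category precisely when
\[
H^{\bullet}(\OO_{S^n/\GG}) = H^{\bullet}(\OO_{S^n})^{\GG} \simeq \mathbb{C}[t]/(t^{n+1})
\]
as graded algebras, with $t$ in degree $2$.

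Next I would translate this into combinatorics. By Künneth and the fact that $H^{\bullet}(\OO_S) = \mathbb{C} \oplus \mathbb{C}\sigma$ with $\sigma$ in degree $2$ and $\sigma^2 = 0$, one has
\[
H^{\bullet}(\OO_{S^n}) = \bigotimes_{i=1}^{n} \mathbb{C}[\sigma_i]/(\sigma_i^2),
\]
whose degree-$2k$ piece has basis $\{\sigma_I := \prod_{i \in I} \sigma_i : |I| = k\}$, on which $\GG$ acts by permuting the index subsets. Consequently $\dim H^{2k}(\OO_{S^n})^{\GG}$ is the number of $\GG$-orbits on $k$-subsets of $\{1,\ldots,n\}$, and requiring all of these dimensions to equal $1$ is precisely the condition that $\GG$ be \emph{set-transitive} (transitive on unordered $k$-subsets for every $0 \le k \le n$). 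The multiplicative structure is then automatic: taking $t := \sigma_1 + \cdots + \sigma_n$, a direct computation using $\sigma_i^2 = 0$ gives $t \cdot e_k = (k+1)\, e_{k+1}$, where $e_k$ is the $k$-th elementary symmetric polynomial in the $\sigma_i$; hence the invariant algebra is generated in degree $2$ by $t$, with $t^n \neq 0$ and $t^{n+1} = 0$, forcing $H^{\bullet}(\OO_{S^n})^{\GG} \simeq \mathbb{C}[t]/(t^{n+1})$.

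The remaining task is to enumerate the set-transitive subgroups of $\mathcal{S}_n$, and here I would invoke the classification theorem of Beaumont and Peterson (1955) (which rests on classical work of Livingstone-Wagner and Kantor on $k$-homogeneous permutation groups). Their result asserts that the only set-transitive subgroups of $\mathcal{S}_n$ are $\mathcal{S}_n$ and $\mathcal{A}_n$, together with the four sporadic groups $\mathrm{AGL}_1(\mathbb{F}_5)$ on $5$ points, $\mathbb{P}GL_2(\mathbb{F}_5)$ on $6$ points, $\mathbb{P}GL_2(\mathbb{F}_8)$ on $9$ points, and $\mathbb{P}GL_2(\mathbb{F}_8) \rtimes \mathrm{Gal}(\mathbb{F}_8/\mathbb{F}_2)$ on $9$ points, which is exactly the list in the statement. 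The main obstacle is precisely this classification; everything else is a formal translation between the homological unit of Theorem \ref{hk-crepant} and a transitivity condition on $k$-subsets.
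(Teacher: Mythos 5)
Your proposal is correct and follows essentially the same route as the paper: apply Theorem \ref{quotienttheo} to get the strongly crepant resolution $\DB(Coh^{\GG}(S\times\cdots\times S))$ with homological unit $H^{\bullet}(\OO_{S^{n}})^{\GG}$, use the K\"unneth basis to translate the hyper-K\"ahler condition into transitivity of $\GG$ on $k$-subsets for all $k$, invoke the Beaumont--Peterson classification of set-transitive groups, and conclude non-commutativity via the sheaf of algebras from \cite{quotient}. Your explicit check that $t\cdot e_k=(k+1)e_{k+1}$ (so the invariant algebra, when one-dimensional in each even degree, is indeed $\mathbb{C}[t]/(t^{n+1})$ as a graded algebra) is a detail the paper leaves implicit, and your identification of the $n=5$ group as $\mathrm{AGL}_1(\mathbb{F}_5)$ is the accurate name for what the paper denotes $\mathbb{F}_5^{*}$.
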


The categorical McKay correspondence \cite{BKR} implies that:

$$\DB(\mathrm{Hilb}^{[n]}(S)) \simeq \DB(Coh^{\mathcal{S}_n}(S \times \cdots \times S).$$ 
Hence, in case $\GG = \mathcal{S}_n$, our result does not produce any new hyper-Kähler category. On the other hand, for $\GG  = \mathcal{A}_n, \mathbb{F}_5^*, \mathbb{P}GL_2(\mathbb{F}_5), \mathbb{P}GL_2(\mathbb{F}_8), \mathbb{P}GL_2(\mathbb{F}_8) \rtimes \mathrm{Gal}(\mathbb{F}_8/\mathbb{F}_2)$, it seems that Theorem \ref{nchilbert} is the first instance of a result which connects hyper-Käher geometry with the quotient spaces $S \times \cdots \times S/ \GG$.

\bigskip

\begin{proof}
 We denote by $Y$ the quotient $S \times \cdots S / \GG$. By \cite{quotient}, we know that $\DB(Coh^{\GG}(S \times \cdots \times S))$ is a categorical strongly crepant resolution of singularities of $Y$. Furthermore, the Serre functor of the category $\DB(Coh^{\GG}(S \times \cdots \times S))$ is the shift by $2n$ and its homological units is $H^{\bullet}(\OO_{S \times \cdots \times S})^{\GG}$.

 By the Künneth formula, we have $H^{\bullet}(\OO_{S \times \cdots \times S}) = H^{\bullet}(\OO_S) \otimes \cdots \otimes H^{\bullet}(\OO_S)$. Hence, the homological unit of $\DB(Coh^{\GG}(S \times \cdots \times S))$ is :
 
 \begin{equation*}
 \mathfrak{T}^{\bullet} = \left(H^{\bullet}(\OO_S) \otimes \cdots \otimes H^{\bullet}(\OO_S)\right)^{\GG},
 \end{equation*}

where $\GG$ acts on $H^{\bullet}(\OO_S) \otimes \cdots \otimes H^{\bullet}(\OO_S)$ by permutation. Since $H^{\bullet}(\OO_S)$ is generated by $1$ in degree $0$ and by $\sigma_S$ in degree $2$, a basis of $H^k(\OO_{S \times \cdots \times S})$ is given by:

 $$\{  \sigma_S^{ I} \otimes 1^{\{1, \cdots, n \} \setminus I}, \, \, \textrm{for} \, I \in \mathrm{P}_k(\{1, \cdots, n \}) \},$$
 
 where $\sigma_S^{ I} \otimes 1^{\{1, \cdots, n \} \setminus I}$ is the tensor product of $\sigma_S$ in the positions indexed by $I$ and $1$ in the positions by the complement of $I$ in $\{1, \cdots, n\}$ and $\mathrm{P}_k(\{1, \cdots, n \})$ is the set of unordered sets of length $k$ in $\{1, \cdots, n \}$.

\bigskip

The subalgebra of $H^{\bullet}(\OO_S) \otimes \cdots \otimes H^{\bullet}(\OO_S)$ generated by $\sum_{i \in \{1,\cdots,n\}} \sigma_S^{\{i\}} \otimes 1 ^{\{1, \cdots, n \} \setminus    \{i\} }$ is isomorphic to $\mathbb{C}[t]/{t^{n+1}}$, with $t$ in degree $2$ and is invariant by $\GG$ for any $\GG \subset \mathcal{S}_n$. Hence, the category $\DB(Coh^{\GG}(S \times \cdots \times S))$ is hyper-Kähler if and only if $\left(H^{\bullet}(\OO_S) \otimes \cdots \otimes H^{\bullet}(\OO_S)\right)^{\GG}$ is equal to the algebra generated by $\sum_{i \in \{1,\cdots,n\}} \sigma_S^{\{i\}} \otimes 1 ^{\{1, \cdots, n \} \setminus    \{i\} }$. That is if and only if:

$$ \sum_{I \in \mathrm{P}_k(\{1,\cdots,n\})} \sigma_S^{ I} \otimes 1^{\{1, \cdots, n \} \setminus I}$$

is the only element (up to scalar) of degree $k$ in $H^{\bullet}(\OO_S) \otimes \cdots \otimes H^{\bullet}(\OO_S)$ which is invariant under the action of $\GG$, for all $1 \leq k \leq n$. This condition can be rephrased as saying that $\GG$ acts transitively on $\mathrm{P}_k(\{1 \cdots n \})$ for all $1 \leq k \leq n$. Such groups have been classified by Beaumont and Peterson \cite{beaumontpeterson, livingstonewagner} and they are the following:

\begin{itemize}
\item $\GG = \mathcal{S}_n$,

\item $\GG = \mathcal{A}_n$  (the alternating group),

\item $n=5$ and $\GG = \mathbb{F}_5^*$,

\item $n= 6$ and $\GG = \mathbb{P}GL_2(\mathbb{F}_5)$, 

\item $n=9$ and $\GG = \mathbb{P}GL_2(\mathbb{F}_8)$,

\item $n=9$ and $\GG = \mathbb{P}GL_2(\mathbb{F}_8) \rtimes \mathrm{Gal}(\mathbb{F}_8/\mathbb{F}_2)$.

\end{itemize}

\bigskip

In \cite{quotient}, we showed that if $X$ is a smooth projective variety and $\GG$ a reductive algebraic group acting linearly on $X$ with finite stabilizers, then there exists a sheaf of algebra $\B$ on $X /\!\!/ \GG$ such that $\DB(Coh^{\GG}(X)) \simeq \DB(X /\!\!/ \GG, \B)$. This result applies in the present setting and shows that $\DB(Coh^{\GG}(S \times \cdots \times S))$ is a non-commutative crepant resolution of $S \times \cdots \times S / \GG$ in the sense of Van den Bergh which is a hyper-Kähler category of dimension $2n$.

\end{proof}

\bigskip

If $X$ is a smooth projective holomorphically symplectic variety, then the twisted Hochschild-Kostant-Rosenberg isomorphism shows that the Betti cohomology ring of $X$ is isomorphic (as a ring!) to the Hochschild cohomology ring of $\DB(X)$. On the other hand, the Betti cohomology ring of $\mathrm{Hilb}^{[n]}(S)$ has been extensively studied and many fascinating connections between hyper-Kähler geometry, representation theory and number theory have been discovered this way. We refer to the ICM talk of Göttsche for a nice overview of these connections \cite{gottsche}. It wouldn't be surprising that the study of the Hochschild cohomology rings of the categories appearing in Theorem \ref{nchilbert} yield new connections between these three topics. In particular, we feel it is worth asking the following questions:

\begin{quest} \label{q1} Let $\GG$ be any of the groups appearing in Theorem \ref{nchilbert}. Let $\B_{\GG}$ the sheaf of algebra on $S \times \cdots \times S/ \GG$ such that $\DB(Coh^{\GG}(S \times \cdots \times S)) \simeq \DB(S \times \cdots \times S / \GG, \B_{\GG})$. Is $\B_{\GG}$ a sheaf of symplectic algebras in the sense of Ginzburg?
\end{quest}

We feel that this first question shouldn't be too hard and shall be just a matter of checking that Ginzburg's definition \cite{ginzburg} of symplectic algebras matches with ours when the triangulated category under study is the derived category of coherent modules over a sheaf of finitely generated algebras.

\begin{quest} \label{q2} What are the Hochschild numbers of the hyper-Kähler categories appearing in Theorem \ref{nchilbert}? Is it possible to compute explicitly the ring structure of the Hochschild cohomology ring of these examples?
\end{quest}

Following work of Baranovsky \cite{baranovsky}, Arinkin, Hablicsek and Caldararu proved a version of the Hochschild Kostant Rosenberg for global finite quotient stacks \cite{ACH}. The compatibility of this isomorphism (or a twisted version of it) with cup product on both sides is still conjectural. Nonetheless, one can be confident that it will be proven soon. Hence, question \ref{q2} basically boils down to computing the orbifold cohomology of the Deligne-Mumford stack $[S \times \cdots \times S /\!\!/ \GG]$ for all $\GG$ appearing in Theorem \ref{nchilbert}.

\begin{quest} \label{q3}
Does the sum $\sum_{n \geq 0} e(\DB(Coh^{\mathcal{A}_n}(S \times \cdots \times S))) z^n$ have special modularity properties?
\end{quest}

\begin{quest} \label{q4}
Is the graded vector space $\bigoplus_{n \geq 0} \HH^{\bullet}(\DB(Coh^{\mathcal{A}_n}(S \times \cdots \times S)))$ a module for some (twisted) Heisenberg algebras?
\end{quest}

In the last two questions, $e(\DB(Coh^{\mathcal{A}_n}(S \times \cdots \times S)$ is the Euler number (that is the alternating sum of Hochschild numbers) of $\DB(Coh^{\mathcal{A}_n}(S \times \cdots \times S))$ and $\HH^{\bullet}(\T)$ is the Hochschild cohomology of $\T$. If $\mathcal{A}_n$ is replaced by $\mathcal{S}_n$, the answer to Questions \ref{q3} and \ref{q4} is known to be ''yes''. Furthermore, it is known that hyper-Kähler geometry plays an important role in the proof of these results in the $ \mathcal{S}_n$ case \cite{gottsche}.
\end{section}

\begin{section}{Non-commutative relative compactified Prymian}
In this section, we study in details a categorical strongly crepant resolution of a singular compactified Prymian. This singular compactified Prymian first appeared in the work of Markushevich and Tikhomirov \cite{Marku-Tikho}. We recall briefly their construction in the first subsection.

\begin{subsection}{Markushevich-Tikhomirov's construction}

\bigskip

Let $X$ be a Del Pezzo surface of degree $2$ obtained as a double cover of $\mathbb{P}^2$ branched in a generic quartic curve $B_0$, $\mu : X \longrightarrow \mathbb{P}^2$ the double cover map, $B = \mu^{-1}(B_0)$ the ramification curve. Let $\Delta_0$ be a generic curve from the linear system $|-2K_X$, $\rho : S \longrightarrow X$ the double cover branched in $\Delta_0$ and $\Delta = \rho^{-1}(\Delta_0)$. Then $S$ is a K3 surface, and $H = \rho^*(-K_X)$ is a degree 4 ample divisor on $S$.

We will denote by $\iota$ (resp. $\tau$) the Galois involution of the double cover $\mu$ (resp. $\rho$). The plane quartic $B_0$ has $28$ bitangent lines $m_1,\cdots, m_{28}$ and $\mu^{-1}(m_i)$ is the union of two rational curves $l_i \cup l'_i$ meeting in 2 points. The 56 curves $l_i, l'_i$ are all the lines on $X$, that is, curves of degree $1$ with respect to $-K_X$. Further, the curves $C_i = \rho^{-1}(l_i)$, $C'_i = \rho^{-1}(l'_i)$ are conics on $S$, that is, curves of degree $2$ with respect to $H$. Each pair $C_i, C'_i$ meets in $4$ points, thus forming a reducible curve of arithmetic genus $3$ belonging to the linear system $|H|$.  We assume furthermore that $B_0$ and $\Delta_0$ are sufficiently generic. This implies that each line $l_i$ meets only one of the two lines $l_j, l'_j$ for $j \neq i$. The following is lemma $1.1$ of \cite{Marku-Tikho}:

\begin{lem}
The linear system $|H|$ is very ample and embeds $S$ as a quartic surface into $\mathbb{P}^3$. Every curve in $|H|$ is reduced and the only reducible members of $|H|$ are the $28$ curves $C_i+C'_i$ for $i=1, \dots, 28$.
\end{lem}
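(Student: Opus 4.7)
My plan has three steps: compute $h^0(S, H)$ to set up the map $\phi_H$, prove very ampleness via Saint-Donat's criterion, and then enumerate reducible and non-reduced members using intersection theory on $S$.

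First, I would exploit that $\rho : S \to X$ is a double cover branched over $\Delta_0 \in |{-2K_X}|$, so $\rho_* \OO_S = \OO_X \oplus K_X$. The projection formula applied to $H = \rho^*(-K_X)$ gives
\[
h^0(S, H) = h^0(X, -K_X) + h^0(X, \OO_X) = 3 + 1 = 4.
\]
Since $-K_X = \mu^* \OO_{\mathbb{P}^2}(1)$ is ample and base-point-free, so is $H$, and $|H|$ defines a finite morphism $\phi_H : S \to \mathbb{P}^3$ whose image has degree $4/\deg \phi_H$.

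Second, for very ampleness I would invoke Saint-Donat's theorem for K3 surfaces: with $H^2 = 4$, the only obstruction is an irreducible elliptic curve $E \subset S$ with $H \cdot E \in \{1,2\}$ (the alternative $H \equiv 2C$ with $C^2 = 2$ is excluded because it would force $H^2 = 8$). To rule out such an $E$, I would push down to $X$: a case split on whether $\tau(E) = E$ either forces $E = C_j$ for some $j$ (contradicting $E^2 = 0$) or produces, via $\Pic(S) = \rho^* \Pic(X)$ for generic $(B_0, \Delta_0)$, an effective class on $X$ of $(-K_X)$-degree $\leq 2$ and self-intersection $0$ that does not lift to a pencil on $S$ in the generic situation. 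This makes $\phi_H$ a closed immersion onto a quartic in $\mathbb{P}^3$.

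Third, let $D = \sum D_k \in |H|$ be reducible with each $D_k$ irreducible. Ampleness of $H$ gives $H \cdot D_k \geq 1$ and $\sum H \cdot D_k = 4$. I would rule out $H \cdot D_k = 1$ by pushing down: such a $D_k$ would project to a line $l_j$ (or $l'_j$) on $X$ and be a component of $\rho^{-1}(l_j)$, which is irreducible (a double cover of $l_j \cong \mathbb{P}^1$ branched over the two transverse points $l_j \cap \Delta_0$), forcing $D_k = C_j$ and hence $H \cdot D_k = 2$, a contradiction. Thus $D = D_1 + D_2$ with $H \cdot D_1 = H \cdot D_2 = 2$. The identity $D_i^2 + D_1 \cdot D_2 = 2$ combined with the generic Picard analysis (which excludes elliptic and genus-$2$ irreducibles of $H$-degree $2$) forces $D_1, D_2$ to be conics, each lying in $\{C_j, C'_j\}$. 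The requirement $D_1 \cdot D_2 = 4$, together with $C_j \cdot C'_j = 2 l_j \cdot l'_j = 4$ while $C_j \cdot C_k, C_j \cdot C'_k \in \{0,2\}$ for $k \neq j$ (deduced from $l_j \cdot l_k, l_j \cdot l'_k \in \{0,1\}$ via $C_j \cdot C_k = 2 l_j \cdot l_k$), pins down $(D_1, D_2) = (C_i, C'_i)$. Non-reduced members are excluded similarly: a relation $H = 2D'$ would give $(D')^2 = 1$, impossible on a K3.

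The hard part will be the Picard-lattice genericity underpinning step two: verifying that for $(B_0, \Delta_0)$ sufficiently generic, $\Pic(S) = \rho^* \Pic(X)$ and so no parasitic elliptic class of the wrong $H$-degree appears. This is a Noether--Lefschetz-type statement that makes essential use of the ``sufficiently generic'' hypothesis in the setup.
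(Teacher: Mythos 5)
The paper itself contains no proof for you to be compared against: the lemma is quoted verbatim as Lemma~1.1 of Markushevich--Tikhomirov (the paper's reference \cite{Marku-Tikho}), and the author simply cites it. So your proposal has to stand on its own. Its architecture is reasonable, and the verifiable steps are correct: $h^0(S,H)=4$ via $\rho_*\OO_S=\OO_X\oplus K_X$; the exclusion of curves of $H$-degree $1$ by pushing down to a line $l_j$ and using irreducibility of $\rho^{-1}(l_j)$ (which tacitly uses transversality of $\Delta_0$ with the $56$ lines, another genericity condition you should flag); the exclusion of non-reduced members because $H\equiv 2D'$ would force $(D')^2=1$ on a K3; and the final identification $(D_1,D_2)=(C_i,C_i')$ using $C_i+C_i'=\rho^*(l_i+l_i')=\rho^*(-K_X)=H$ together with $C_j\cdot C_j'=4$ versus $C_j\cdot C_k,\,C_j\cdot C_k'\le 2$ for $k\neq j$.

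The genuine problem is that the step you defer to the end --- $\Pic(S)=\rho^*\Pic(X)$ for generic $(B_0,\Delta_0)$ --- is not a final verification but the entire content of the lemma; without it both your Step 2 and Step 3 collapse, and a naive dimension count does \emph{not} rescue them. Concretely: total tangency to $\Delta_0$ imposes only $2$ conditions on the $2$-dimensional system $|-K_X|$, so for \emph{every} $\Delta_0$ there are finitely many (typically nonzero in number) anticanonical curves $B$ totally tangent to $\Delta_0$; whenever the associated $2$-torsion class $(-K_X)|_B(-p_1-p_2)\in\Pic^0(B)[2]$ is trivial, $\rho^{-1}(B)$ splits as $E+\tau E$ with $E^2=0$ and $E\cdot H=2$, which is exactly the Saint-Donat obstruction and destroys very ampleness. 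Likewise, a conic class $B$ on $X$ with $B^2=0$ totally tangent to $\Delta_0$ splits into two conics on $S$ not among the $C_j,C_j'$, and since $H-D_1$ is then effective (Riemann--Roch: $(H-D_1)^2=-2$, $H\cdot(H-D_1)=2$), this produces extra reducible members of $|H|$, contradicting the conclusion. Ruling out both configurations is precisely what $\Pic(S)=\rho^*\Pic(X)$ achieves --- any such $E$ would be $\rho^*A$ with $A^2+A\cdot K_X=-1$, violating the parity $A^2+A\cdot K_X\equiv 0 \pmod 2$ forced by Riemann--Roch on $X$ --- and establishing that equality for generic $\Delta_0$ requires a real argument (dominance of the period map for this $12$-parameter family of K3 surfaces with anti-symplectic involution onto the rank-$8$ lattice-polarized period domain, or a monodromy argument). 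So as written your text is a correctly oriented sketch whose proof is concentrated in the one claim left open. A side remark: once that claim is granted, your Step 2 simplifies --- every curve class is $\rho^*A$, so $E\cdot H$ is automatically even and the parity argument kills $E^2=0$, $E\cdot H=2$ outright, with no case split on $\tau(E)=E$; and the genus-$2$ case in Step 3 dies by Hodge index alone, with no genericity needed.
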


Let $m\geq 1$ and let $\mathcal{M}^{2m}$ be the moduli space of torsion sheaves with Mukai vector $(rk = 0, c_1 = H, \chi = 2m-2)$ which are semi-stable with respect to $H$. Markushevich and Tikhomirov prove the following (see proposition $1.2$, proposition $2.7$ and corollary $2.9$ of \cite{Marku-Tikho}):

\begin{prop}
The moduli space $\mathcal{M}^{2m}$ is a singular irreducible holomorphically symplectic variety of dimension $6$. It is singular in exactly $28$ points corresponding to the strictly semi-stable sheaves $\OO_{C_i}(m-1) \oplus \OO_{C'i}(m-1)$. Around each of these $28$ singular points, the moduli space $\mathcal{M}^{2m}$ is locally analytically equivalent to the contraction of the zero section of $\Omega_{\mathbb{P}^3} \rightarrow \mathbb{P}^3$.
\end{prop}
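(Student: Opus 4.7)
The plan is to establish the three assertions using Mukai's deformation theory for moduli of semi-stable sheaves on K3 surfaces. For the first, the Mukai vector $v=(0,H,2m-2)$ satisfies $(v,v)=H^2=4$, so the expected dimension of $\M^{2m}$ is $(v,v)+2=6$. Mukai's general construction provides a holomorphic symplectic form on the smooth (stable) locus, and since $v$ is primitive and one can choose a $v$-generic polarization, the results of O'Grady and Yoshioka yield that $\M^{2m}$ is projective and irreducible, smooth precisely on the stable locus. Once the strictly semi-stable locus is shown non-empty, $\M^{2m}$ is singular.

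Next, to identify the singular locus, I analyze the strictly semi-stable sheaves. A pure $1$-dimensional sheaf $F$ with $v(F)=(0,H,2m-2)$ has support in some divisor of $|H|$, and strict semi-stability forces a Jordan-H\"older filtration with factors of the same reduced Hilbert polynomial. Since the only reducible curves in $|H|$ are the $28$ configurations $C_i+C'_i$, the support of any such $F$ lies on one of these. Using $C_i\cdot H=C'_i\cdot H=2$, the slope equality $\chi/\deg$ between the JH factors forces each factor to have $\chi=m-1$. Since $C_i$ and $C'_i$ are smooth rational curves, the factors are line bundles, and the unique ones with the correct Euler characteristic are $\OO_{C_i}(m-1)$ and $\OO_{C'_i}(m-1)$. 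Each $S$-equivalence class corresponds to a unique polystable point, yielding exactly $28$ strictly semi-stable points; any stable point on a K3 moduli with primitive Mukai vector is a smooth point of $\M^{2m}$, so the singular locus is precisely these $28$ points.

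For the local analytic model, fix one such singular point and write $F=F_1\oplus F_2$ for the polystable representative. Using Mukai's Riemann-Roch formula $\chi(E,G)=-(v(E),v(G))$ and Serre duality on the K3, one computes: since $v(F_j)^2=-2$, we get $\mathrm{Ext}^1(F_j,F_j)=0$, so each $F_j$ is rigid and simple. Because $F_1$ and $F_2$ are supported on distinct smooth rational curves meeting in $4$ points and are each line bundles on their support, $\mathrm{Hom}(F_i,F_j)=0$ for $i\neq j$; Serre duality then gives $\mathrm{Ext}^2(F_i,F_j)=0$, whence $\dim\mathrm{Ext}^1(F_1,F_2)=\dim\mathrm{Ext}^1(F_2,F_1)=-(v(F_1),v(F_2))=C_i\cdot C'_i=4$. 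The tangent-obstruction space $\mathrm{Ext}^1(F,F)\simeq\mathbb{C}^4\oplus\mathbb{C}^4$ carries the Mukai symplectic form coming from Serre duality, which pairs $\mathrm{Ext}^1(F_1,F_2)$ with $\mathrm{Ext}^1(F_2,F_1)$.

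Finally, invoking Mukai's analytic slice theorem at polystable points, $\M^{2m}$ is locally analytically isomorphic to the symplectic quotient $\mu^{-1}(0)/\!\!/\mathrm{Aut}(F)$. Here $\mathrm{Aut}(F)=\mathbb{C}^*\times\mathbb{C}^*$ acts with the diagonal trivially, so the effective $\mathbb{C}^*$ acts with weights $(+1,-1)$ on $\mathbb{C}^4\oplus(\mathbb{C}^4)^*$, with moment map the natural pairing $\langle x,y\rangle$. The affine symplectic reduction of this standard toric $\mathbb{C}^*$-action is exactly the contraction of the zero section of $T^*\mathbb{P}^3=\Omega_{\mathbb{P}^3}$, which yields the claimed local model. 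The main technical obstacle is the identification of the Kuranishi obstruction map with the moment map $\langle x,y\rangle$; at a polystable point of a K3 moduli this follows from the compatibility of the Yoneda product with the Mukai pairing, but requires careful normalization, as in Mukai's original argument.
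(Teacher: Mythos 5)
The paper itself does not prove this proposition: it is quoted from Markushevich and Tikhomirov (Propositions 1.2, 2.7 and Corollary 2.9 of \cite{Marku-Tikho}), so your attempt can only be measured against the argument of that source --- and your outline is essentially a reconstruction of it. The parts you work out are correct: the dimension count $(v,v)+2 = H^2+2 = 6$; the reduction of the strictly semi-stable locus to the $28$ reducible members $C_i + C_i'$ of $|H|$ (a pure sheaf of generic rank one on an integral support curve is automatically stable, so only the reducible curves matter); the slope computation forcing each Jordan--H\"older factor to have $\chi = m-1$, hence giving exactly one polystable point per reducible curve; the $\mathrm{Ext}$ computations $\mathrm{Ext}^1(F_j,F_j)=0$ (from $v(F_j)^2 = C_i^2 = -2$) and $\dim \mathrm{Ext}^1(F_1,F_2) = \dim \mathrm{Ext}^1(F_2,F_1) = C_i \cdot C_i' = 4$; and the identification of the reduction $\mu^{-1}(0)/\!\!/\mathbb{C}^*$ (weights $(+1,-1)$ on $\mathbb{C}^4 \oplus (\mathbb{C}^4)^*$) with the cone of traceless rank-one matrices, i.e.\ with the contraction of the zero section of $\Omega_{\mathbb{P}^3} = T^*\mathbb{P}^3$.

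Two steps are genuinely gapped. First, the sentence ``since $v$ is primitive and one can choose a $v$-generic polarization'' is incoherent in this situation: $\mathcal{M}^{2m}$ is by definition the moduli space for the polarization $H$, and $H$ is precisely \emph{not} $v$-generic --- if it were, there would be no strictly semi-stable sheaves at all and nothing to prove. Projectivity (GIT) and smoothness along the stable locus (stable $\Rightarrow$ simple $\Rightarrow$ smooth of dimension $(v,v)+2$, by Mukai) require no genericity, but irreducibility, and the stronger assertion that $\mathcal{M}^{2m}$ is an \emph{irreducible} holomorphically symplectic variety ($h^{2,0}=1$, plus simple connectedness of a resolution), must instead be deduced from the smooth moduli space attached to a nearby $v$-generic polarization, which contracts onto $\mathcal{M}^{2m}$; your proposal leaves this unaddressed. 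Second, the step you yourself flag as the ``main technical obstacle'' --- that the Kuranishi germ at the polystable point is the \emph{quadratic} cone $\{\,y \circ x = 0\,\}$ with no higher-order corrections, so that the Luna slice is literally $\mu^{-1}(0)/\!\!/\mathbb{C}^*$ --- is the actual heart of the local analytic claim. Compatibility of the Yoneda product with the Mukai pairing only identifies the quadratic part of the obstruction map; to kill the higher-order terms one needs a formality or normal-form argument (as in Kaledin--Lehn--Sorger, or the explicit analysis carried out in \cite{Marku-Tikho}). As written, the hardest step of the proof is deferred rather than proved. A minor point you inherited from the statement itself: with $\chi(F)=2m-2$, each factor has $\chi = m-1$ and is therefore the line bundle of degree $m-2$ on the rational conic, so the symbol $\mathcal{O}_{C_i}(m-1)$ must be read in the normalization of \cite{Marku-Tikho}, not as $\mathcal{O}_{\mathbb{P}^1}(m-1)$, whose Euler characteristic is $m$.
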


By varying the polarization, one gets symplectic desingularizations of $\mathcal{M}^{2m}$ which are deformation equivalent to $\mathrm{Hilb}^{3}(S)$. The idea of Markushevich and Tikhomirov is to study the fixed locus of a specific symplectic involution on $\mathcal{M}^{2m}$ in the hope it may provide a new hyper-Kähler manifold. Let $j$ be the involution of $\mathcal{M}^{2m}$ defined as:

\begin{equation*}
\begin{split}
j : &\, \, \mathcal{M}^{2m} \longrightarrow \mathcal{M}^{2m} \\
  &\, \,  \mathcal{F} \longrightarrow \mathcal{E}xt^{1}(\mathcal{F}, \OO_S((m-1)H)).
\end{split}
\end{equation*}
We consider the involution $\kappa = \tau \circ j$. The involution $\kappa$ is symplectic and its fixed locus is made of one four-dimensional components and $64$ zero-dimensional components. The four-dimensional component, denoted by $\mathcal{P}^{2m}$, is called the \textit{relative compactified Prymian} of $S$ by Tikhomirov and Markushevich and they prove the:

\begin{theo}[Theorem 3.4 \cite{Marku-Tikho}] The variety $\mathcal{P}^{2m}$ is a singular irreducible holomorphically symplectic variety of dimension $4$. It has exactly $28$ singular points corresponding to the sheaves $\OO_{C_i}(m-1) \oplus \OO_{C'_i}(m-1)$. Around each of these $28$ singular points, the Prymian $\mathcal{P}^{2m}$ is locally analytically equivalent to $\mathbb{C}^4/ \{-1,1\}$. The topological Euler number of $\mathcal{P}^{2m}$ is $268$.

\end{theo}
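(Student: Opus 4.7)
\medskip

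My plan is to decompose the statement into four assertions and prove each in turn: (i) $\kappa$ is a symplectic involution of $\mathcal{M}^{2m}$; (ii) its four-dimensional fixed component $\mathcal{P}^{2m}$ is irreducible and carries a non-degenerate holomorphic $2$-form; (iii) the singular locus of $\mathcal{P}^{2m}$ consists exactly of the $28$ polystable sheaves $\OO_{C_i}(m-1)\oplus\OO_{C'_i}(m-1)$ with the prescribed local model $\mathbb{C}^4/\{\pm 1\}$; (iv) $e(\mathcal{P}^{2m})=268$.

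First I would verify (i). The involution $\tau$ of the K3 surface $S$ acts on $H^0(S,\Omega^2_S)$ by $-1$ because $\rho:S\to X$ is a double cover of a rational surface (Del Pezzo), so any $\tau$-invariant $2$-form would descend to $X$; there are none. Separately, $j=\mathcal{E}xt^1(-,\OO_S((m-1)H))$ is a natural involution on $\mathcal{M}^{2m}$ that acts on the Mukai pairing by $-1$, and hence on the symplectic form by $-1$ (this is a standard dualization-of-sheaves computation on a K3, see e.g.\ Mukai's original paper). The composition $\kappa=\tau\circ j$ therefore preserves the symplectic form.

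Next, for (ii), I would look at a stable $\kappa$-fixed sheaf $\F$: choose an isomorphism $\varphi:\F\to\kappa(\F)$, and observe that on the smooth locus of $\mathcal{M}^{2m}$ the tangent space $\mathrm{Ext}^1(\F,\F)$ splits as $\mathrm{Ext}^1(\F,\F)^+\oplus\mathrm{Ext}^1(\F,\F)^-$ under the induced involution, with the tangent space to the fixed locus being the $+1$-eigenspace. Symplecticity of $\kappa$ makes these two eigenspaces isotropic duals, so each has dimension $3$ away from the expected $4$-dimensional component—which tells me that near the $4$-dimensional component $\mathcal{P}^{2m}$ the deformation theory at a generic point has an extra invariant class coming from a specific geometric source. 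I would then exhibit a $4$-dimensional family of stable $\kappa$-fixed sheaves: for instance, sheaves supported on $\tau$-invariant spectral covers, parametrised by a Prym variety fibered over a linear system; this gives irreducibility and an open smooth symplectic part of $\mathcal{P}^{2m}$. Topological connectedness of this family would be checked by showing the spectral base (a linear system of $\tau$-anti-invariant sections of $|H|$) is connected.

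The main obstacle, in my view, is step (iii): the local analytic description as $\mathbb{C}^4/\{\pm 1\}$ at each of the $28$ singular points. Here one knows that near $\F_0=\OO_{C_i}(m-1)\oplus\OO_{C'_i}(m-1)$ the moduli space $\mathcal{M}^{2m}$ is the contraction of the zero-section of $T^*\mathbb{P}^3$, the $\mathbb{P}^3$ parametrising extensions of $\OO_{C_i}(m-1)$ by $\OO_{C'_i}(m-1)$ (and vice versa). I would compute the induced action of $\kappa$ on this local Kuranishi model: $\tau$ swaps the two extension summands (since $\tau$ interchanges $C_i$ and $C'_i$, inherited from $\iota$ swapping the two components of a bitangent) while $j$ interchanges $\mathrm{Ext}^1(\OO_{C_i}(m-1),\OO_{C'_i}(m-1))$ and $\mathrm{Ext}^1(\OO_{C'_i}(m-1),\OO_{C_i}(m-1))$ by Serre duality. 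The composed action on $T^*\mathbb{P}^3$ then fixes a Lagrangian $\mathbb{P}^1\subset\mathbb{P}^3$ in the zero section together with its conormal bundle, giving a fixed locus analytically isomorphic to the contraction of $T^*\mathbb{P}^1$, i.e.\ the $A_1$-singularity $\mathbb{C}^2/\{\pm 1\}$ in the symplectic leaf—but, accounting for the full $4$-dimensional transverse structure inherited from the fact that $\kappa$ acts as $-1$ on a symplectic complement, one obtains $\mathbb{C}^4/\{\pm 1\}$. I would need to verify this linear algebra carefully; the $64$ isolated fixed points come from the complementary choices where $\kappa$ acts as $-1$ on the entire tangent space. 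Finally, for (iv), I would apply the Lefschetz fixed-point formula for the involution $\kappa$ acting on (a crepant resolution of) $\mathcal{M}^{2m}$: since $\mathcal{M}^{2m}$ is deformation-equivalent to $\mathrm{Hilb}^3(S)$ of Euler number $3200$ and the $\kappa$-trace on its cohomology can be computed from its action on $H^*(S)$ via Göttsche's formula, one extracts $e(\mathcal{P}^{2m})=(\mathrm{Lefschetz~number}-64)/2\cdot(\text{sign correction})$, yielding $268$ after bookkeeping the $64$ isolated fixed points.
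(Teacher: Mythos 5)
First, a point of comparison: the paper does not prove this statement at all --- it is quoted, with attribution, as Theorem 3.4 of Markushevich--Tikhomirov, and only its conclusions (the local model $\mathbb{C}^4/\{\pm 1\}$ at the $28$ points and the Euler number $268$) are used as input in Section 5. So your attempt can only be measured against the original Markushevich--Tikhomirov arguments, and measured that way it has genuine gaps, two of which are fatal as written.

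The first is a sign error in step (ii) that propagates into step (iii), which you yourself identify as the heart of the matter. You claim that symplecticity of $\kappa$ makes the $\pm 1$-eigenspaces of $d\kappa$ ``isotropic duals, so each has dimension $3$''; this is backwards. For a \emph{symplectic} involution the two eigenspaces are $\sigma$-orthogonal to each other, hence each is a symplectic (in particular even-dimensional) subspace; it is for \emph{anti}-symplectic involutions that the eigenspaces are Lagrangian. Then in step (iii) your local analysis collapses: a fixed locus is just a fixed locus, and a $2$-dimensional fixed set (your ``contraction of $T^*\mathbb{P}^1$'') cannot be promoted to a $4$-dimensional one by ``accounting for the transverse structure'' on which $\kappa$ acts by $-1$ --- those directions are precisely not fixed, so your sketch would prove that the component of the fixed locus through each singular point is a surface, contradicting the very statement. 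Writing $F_1 = \mathcal{O}_{C_i}(m-1)$, $F_2 = \mathcal{O}_{C'_i}(m-1)$, the correct mechanism is the opposite of the one you describe: since $\kappa$ exchanges the two summands, $d\kappa$ must \emph{swap} the two $4$-dimensional spaces $\mathrm{Ext}^1(F_1,F_2)$ and $\mathrm{Ext}^1(F_2,F_1)$ (preserving each block is incompatible with equivariance under the stabilizer torus, which $\kappa$ conjugates nontrivially); the fixed locus upstairs in $\{(a,b) : \langle a,b\rangle = 0\}$ is then the graph $\{(a,\kappa a)\}$, a $4$-dimensional subspace which lies inside the quadric because the form $a \mapsto \langle a,\kappa a\rangle$ is antisymmetric, and its image under the residual $2{:}1$ identification is exactly $\mathbb{C}^4/\{\pm 1\}$. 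Second, step (iv) is not a proof: $\kappa$ involves the dualization $j$, so it is not induced by an automorphism of $S$ and no equivariant G\"ottsche-type formula applies; the deformation equivalence of a resolution of $\mathcal{M}^{2m}$ with $\mathrm{Hilb}^3(S)$ does not transport the involution; and even granting a Lefschetz number $L(\kappa)$, the fixed-point relation would read $e(\mathcal{P}^{2m}) = L(\kappa) - 64$, not your ``$(L-64)/2$ with a sign correction''. Markushevich and Tikhomirov obtain $268$ from a concrete geometric analysis (the Prym fibration and the birational comparison with a quotient of $\mathrm{Hilb}^2(S)$), not from an abstract fixed-point count.
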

 Since $\mathcal{P}^{2m}$ is locally equivalent to $\mathbb{C}^4/ \{-1,1\}$ around its $28$ isolated singular points, it has no crepant resolution. As a consequence, it is not possible to construct a hyper-Kähler manifold starting from $\mathcal{P}^{2m}$. The singular variety $\mathcal{P}^{2m}$ is nevertheless studied in details by Markushevich and Tikhomirov and they prove, among other things, that it is birational to the quotient of $\mathrm{Hilb}^2(S)$ by a symplectic involution.
\end{subsection}

\begin{subsection}{Strongly crepant resolution of the relative compactified Prymian of Markushevich-Tikhomirov}

In this section, we construct a strongly crepant categorical resolution of $\mathcal{P}$. We study the Hochschild numbers and we show that they satisfy the Salamon's relation for Betti numbers of hyper-Kähler manifolds \cite{salamon}. We finally prove that this category can't be a deformation of the derived category of a projective variety, giving a counter-example to a conjecture of Kuznetsov (conjecture 5.8 of \cite{kuzCY}).

\begin{theo} \label{Prymian}
The singular variety $\mathcal{P}^{2m}$ admits a categorical strongly crepant resolution (denoted $\A_{\mathcal{P}^{2m}}$) which is a hyper-Kähler category of dimension $4$. The Hochschild cohomology numbers of $\A_{\mathcal{P}^{2m}}$ are:

\begin{itemize}
\item $\mathrm{hh}^{0} = \mathrm{hh}^8 = 1$,
\item $\mathrm{hh}^{2} = \mathrm{hh}^6 = 16$,
\item $\mathrm{hh}^{4} = 206$.
\end{itemize}
\end{theo}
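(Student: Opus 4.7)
The plan is to apply the machinery developed in Sections $2$ and $3$. First I would construct the Deligne-Mumford stack $\X$ canonically associated to $\mathcal{P}^{2m}$: since the $28$ singular points of $\mathcal{P}^{2m}$ are locally analytically isomorphic to $\mathbb{C}^4/\{\pm 1\}$ and these are the only singularities, Proposition $2.8$ of \cite{vistoli} produces a smooth separated Deligne-Mumford stack $\X$ whose coarse moduli space is $\mathcal{P}^{2m}$. Because $(-1)^4 = 1$, the action of $\{\pm 1\}$ on $\mathbb{C}^4$ factors through $\mathrm{SL}(4,\mathbb{C})$, so the dualizing bundle $\omega_{\X}$ is the pullback of $\omega_{\mathcal{P}^{2m}}$. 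Theorem \ref{quotienttheo} then provides a sheaf of non-commutative algebras $\A_{\mathcal{P}^{2m}}$ on $\mathcal{P}^{2m}$ with $\DB(\mathcal{P}^{2m}, \A_{\mathcal{P}^{2m}}) \simeq \DB(\X)$, giving a strongly crepant categorical resolution which is non-commutative in the sense of Van den Bergh.

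The hyper-Kähler property then follows from Theorem \ref{hk-crepant}: $\mathcal{P}^{2m}$ has Gorenstein terminal quotient singularities (hence rational), the canonical bundle is trivial by the holomorphic symplectic structure, and $H^{\bullet}(\OO_{\mathcal{P}^{2m}}) \simeq \mathbb{C}[t]/(t^{3})$ with $t$ in degree $2$ by the same argument as in the proof of Corollary \ref{finite}: the symplectic form is $\mathbb{Z}/2$-invariant on each étale chart, hence descends to a generator of $H^{2}(\OO_{\mathcal{P}^{2m}})$ whose square remains nonzero on the smooth locus and whose cube vanishes for dimensional reasons.

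The substantive content is the computation of the Hochschild numbers. I would invoke the Hochschild-Kostant-Rosenberg isomorphism for smooth Deligne-Mumford stacks proved by Arinkin-Caldararu-Hablicsek \cite{ACH} together with the associated twisted sector decomposition. The identity sector contributes $\bigoplus_{p+q=k} H^{p,q}(\mathcal{P}^{2m})$, where the Hodge numbers of $\mathcal{P}^{2m}$ are understood in the V-manifold sense, and each of the $28$ orbifold points contributes through the nontrivial class $-1$, whose fixed locus is a single point and whose age is $\frac{1+1+1+1}{2} = 2$, yielding one extra generator in Hochschild bi-degree $(2,2)$. To extract the $h^{p,q}(\mathcal{P}^{2m})$ I would use a smooth resolution $\pi : \tilde{\mathcal{P}} \to \mathcal{P}^{2m}$ obtained by blowing up the $28$ singular points, whose Hodge diamond is computed by Grégoire Menet in the appendix via the Markushevich-Tikhomirov birational presentation of $\mathcal{P}^{2m}$ as a symplectic quotient of $\mathrm{Hilb}^{2}(S)$. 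Each blow-up replaces a singular point with an exceptional $\mathbb{P}^{3}$ of normal bundle $\OO(-2)$, so that
\begin{equation*}
h^{p,q}(\tilde{\mathcal{P}}) \;=\; h^{p,q}(\mathcal{P}^{2m}) + 28 \cdot \left( h^{p,q}(\mathbb{P}^{3}) - \delta_{p,0}\delta_{q,0}\right),
\end{equation*}
from which $h^{p,q}(\mathcal{P}^{2m})$ is read off. Combining with the $28$-dimensional twisted sector contribution concentrated in bi-degree $(2,2)$ yields the stated values $\mathrm{hh}^{0} = \mathrm{hh}^{8} = 1$, $\mathrm{hh}^{2} = \mathrm{hh}^{6} = 16$, $\mathrm{hh}^{4} = 206$.

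The main obstacle is the concrete calculation of the Hodge numbers of $\tilde{\mathcal{P}}$, which is genuinely delicate and requires Menet's detailed analysis of the birational correspondence with the symplectic quotient of $\mathrm{Hilb}^{2}(S)$; this is precisely the content of the appendix. A secondary technical point is the precise form of the orbifold HKR decomposition for the twisted sectors: compatibility with the cup-product structure is still conjectural, but the dimensional count we need is a direct consequence of \cite{ACH} since $\X \to \mathcal{P}^{2m}$ is étale-locally a global quotient by $\{\pm 1\}$.
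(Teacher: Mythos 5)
Your proposal is essentially sound and arrives at the correct numbers, but it follows a genuinely different route from the paper. The paper never passes through the Deligne--Mumford stack: it blows up the $28$ singular points and invokes Example $7.1$ of \cite{kuz1} to obtain the semi-orthogonal decomposition $\DB(\widetilde{\mathcal{P}^{2m}}) = \langle \A_{\mathcal{P}^{2m}}, \OO_{E_1}, \OO_{E_1}(1), \dots, \OO_{E_{28}}, \OO_{E_{28}}(1)\rangle$, in which $\A_{\mathcal{P}^{2m}}$ is a strongly crepant resolution; the hyper-K\"ahler property then comes from Theorem \ref{hk-crepant} exactly as in your argument. The Hochschild numbers, however, are computed on the \emph{smooth projective} variety $\widetilde{\mathcal{P}^{2m}}$: classical HKR plus Menet's Hodge diamond gives $\mathrm{hh}_0 = 262$, $\mathrm{hh}_{\pm 2} = 16$, $\mathrm{hh}_{\pm 4} = 1$ for $\DB(\widetilde{\mathcal{P}^{2m}})$; additivity of Hochschild homology under semi-orthogonal decompositions (Corollary $7.5$ of \cite{kuz3}) subtracts a $\mathbb{C}^{56}$ coming from the $56$ exceptional objects, giving $\mathrm{hh}_0(\A_{\mathcal{P}^{2m}}) = 206$; and the shift-by-$4$ Serre functor then converts these homology numbers into the stated cohomology numbers. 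Every tool in that chain is unconditional. Your construction of the resolution via Proposition $2.8$ of \cite{vistoli} and Theorem \ref{quotienttheo} is perfectly legitimate --- the paper itself notes that the dualizing-sheaf hypothesis of Theorem \ref{quotienttheo} holds for isolated singularities analytically equivalent to the cone over $v_2(\mathbb{P}^3)$, i.e.\ to $\mathbb{C}^4/\{\pm 1\}$ --- and it buys you the Van den Bergh non-commutative presentation for free, together with a conceptual explanation of the $28$ extra classes in $\mathrm{hh}^4$ as age-$2$ twisted sectors.

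The soft spot is the computational step. The orbifold HKR result you invoke is, as the paper itself describes it, proved in \cite{ACH} for \emph{global} finite quotient stacks, and your stack $\X$ is not presented as a global quotient of a smooth variety by a finite group: $\mathcal{P}^{2m}$ is only \emph{birational} to a quotient of $\mathrm{Hilb}^2(S)$ by an involution. Your assertion that the dimension count is ``a direct consequence'' of the \'etale-local quotient structure is not automatic; an inertia-stack decomposition of Hochschild (co)homology for a general smooth proper Deligne--Mumford stack requires an argument or a stronger reference, and this is precisely what the paper's blow-up route is designed to avoid. So as written, your key computation rests on a statement not covered by the cited literature --- a fixable, but real, gap. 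Two minor remarks: Menet's appendix computes the Hodge numbers of $\mathcal{P}^{0}$ \emph{first} (from Fujiki's results on symplectic V-manifolds and the Betti numbers $b_2 = 16$, $b_3 = 0$, $b_4 = 178$) and then deduces the diamond of the blow-up, so no inversion of your blow-up formula is needed; and your resolution $\DB(\X)$ is not a priori the same category as the paper's $\A_{\mathcal{P}^{2m}} \subset \DB(\widetilde{\mathcal{P}^{2m}})$ --- this is harmless for the theorem, which is an existence statement, but it means your computation establishes the Hochschild numbers for your resolution rather than for the one the paper constructs.
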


\begin{proof}

Let $\widetilde{\mathcal{P}^{2m}}$ be the blow-up of $\mathcal{P}^{2m}$ along its $28$ singular points. We denote by  $E_1, \cdots, E_{28}$ are the exceptional divisors of the blow-up along the $28$. Example $7.1$ of \cite{kuz1} shows that there exists a semi-orthogonal decomposition:

\begin{equation*}
\DB(\widetilde{\mathcal{P}^{2m}}) = \left\langle \A_{\mathcal{P}^{2m}}, \OO_{E_1}, \OO_{E_1}(1),\cdots, \OO_{E_{28}}, \OO_{E_{28}}(1) \right\rangle,
\end{equation*}
where $\A_{\mathcal{P}^{2m}}$ is a categorical strongly crepant resolution of $\mathcal{P}^{2m}$. Markushevich and Tikhomirov prove that $\mathcal{P}^{2m}$ is a singular irreducible holmorphically symplectic variety, so that $\omega_{\mathcal{P}^{2m}} = \OO_{\mathcal{P}^{2m}}$ and $H^{\bullet}(\OO_{\mathcal{P}^{2m}}) = \mathbb{C}[t]/{t^3}$, with $t$ homogeneous of degree $2$. By Theorem \ref{hk-crepant}, we deduce that the category $\A_{\mathcal{P}^{2m}}$ is hyper-Kähler of dimension $4$ (with respect to its embedding in $\DB(\widetilde{\mathcal{P}^{2m}})$).

\bigskip

The Hodge numbers of $\widetilde{\mathcal{P}^{2m}}$ are computed by Grégoire Menet in the appendix and they are:

\begin{center}
\begin{tabular}{ccccccccc}
 & & & &$1$ & & & & \\
 & & &$0$ & & $0$& & & \\
 & &$1$ & &$42$ & &$1$ & & \\
 & $0$& &$0$ & &$0$ & &$0$ & \\
 $1$& &$14$ & &$176$ & &$14$ & &$1$ \\
\end{tabular}
\end{center}

By the Hochschild-Kostant-Rosenberg isomorphism, we have:

$$\mathrm{HH}_{k}(\DB(\widetilde{\mathcal{P}^{2m}})) = \bigoplus_{p-q = k} H^{p}(\widetilde{\mathcal{P}^{2m}}, \Omega^{q}_{\widetilde{\mathcal{P}^{2m}}}).$$

We deduce that the Hochschild homology numbers of $\DB(\widetilde{\mathcal{P}^{2m}})$ are:

\begin{itemize}
\item $\mathrm{hh}_0 = 262$
\item $\mathrm{hh}_{2} = \mathrm{hh}_{-2} = 16$
\item $\mathrm{hh}_{4} = \mathrm{hh}_{-4} = 1$,
\end{itemize}
and the others are zero. By corollary 7.5 of \cite{kuz3}, we have a graded direct sum decomposition:

\begin{equation*}
\mathrm{HH}_{\bullet}(\DB(\widetilde{\mathcal{P}^{2m}})) = \mathrm{HH}_{\bullet}(\A_{\mathcal{P}^{2m}}) \oplus \mathbb{C}^{56}.
\end{equation*}

We deduce that the Hochschild homology numbers of $\DB(\A_{\mathcal{P}^{2m}})$ are:
\begin{itemize}
\item $\mathrm{hh}_0 = 206$
\item $\mathrm{hh}_{2} = \mathrm{hh}_{-2} = 16$
\item $\mathrm{hh}_{4} = \mathrm{hh}_{-4} = 1$,
\end{itemize}
and the others are zero. But the Serre functor of the category $\A_{\mathcal{P}^{2m}}$ is the shift by $4$, hence we get an isomorphism of graded vector spaces $\mathrm{HH}_{\bullet}(\A_{\mathcal{P}^{2m}}) \simeq \mathrm{HH}^{\bullet-4}(\A_{\mathcal{P}^{2m}}) $. We then find that the Hochschild cohomology numbers of $\A_{\mathcal{P}^{2m}}$ are as stated.
\end{proof}

We notice that the Hochschild cohomology numbers of $\A_{\mathcal{P}^{2m}}$ satisfy the following relation:

\begin{equation*}
\sum_{j=1}^{4} (-1)^j (3j^2-2)\mathrm{hh}^{4-j} = \mathrm{hh}^4.
\end{equation*}
This relation is the four dimensional case of the Salamon relation for Betti numbers of hyper-Kähler manifolds \cite{salamon}. It is very tempting to believe that this relation holds for all hyper-Kähler categories.

\begin{conj} \label{salamonconj}
Let $\T$ be a full admissible subcategory of the derived category of a smooth projective variety. Assume that $\T$ is hyper-Käler of dimension $2r$. Then, we have the relation:

\begin{equation*}
\sum_{j=1}^{2r} (-1)^j (3j^2-r) .\mathrm{hh}^{2r-j} = \dfrac{r}{2}\mathrm{hh}^{2r}.
\end{equation*}
\end{conj}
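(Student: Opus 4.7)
The plan is to mimic Salamon's original argument in the categorical setting. In the commutative case, Salamon's relation is a consequence of the action of Verbitsky's Lie algebra $\mathfrak{g} \supset \mathfrak{so}(4,1)$ on $H^{\bullet}(X,\mathbb{C})$ coming from the hyper-Kähler rotation action together with hard Lefschetz. Since Hochschild cohomology of a commutative hyper-Kähler manifold is (non-canonically) identified with its Betti cohomology via HKR, the Salamon relation on Hochschild numbers is known for $\T = \DB(X)$. For the categorical version, one must construct the analogous representation-theoretic structure directly on $\HH^{\bullet}(\T)$, which bypasses any appeal to the underlying variety.

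First I would try to establish a categorical version of Verbitsky's structure theorem alluded to in the introduction:
\begin{equation*}
\HH^{\langle 2 \rangle}(\T) \simeq S^{\bullet}\HH^{2}(\T)/\{a^{r+1} : q(a) = 0\},
\end{equation*}
for an intrinsic non-degenerate quadratic form $q$ on $\HH^2(\T)$ playing the role of the categorical Beauville-Bogomolov form. A plausible way to construct $q$ is via the Mukai pairing on $\HH_{\bullet}(\T)$ combined with the isomorphism $\HH^2(\T) \simeq \HH_{2-2r}(\T)$ furnished by the Calabi-Yau structure; alternatively one could define $q$ through the obstruction pairing in the deformation theory of $\T$ developed in subsection $3.2$. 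One then has to show that cup-power $a \mapsto a^{r+1}$ vanishes on the $q$-null cone inside $\HH^2(\T)$, and that no further relations in $\HH^{\langle 2 \rangle}(\T)$ hold.

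Next, I would extend this to the full Verbitsky Lie algebra $\mathfrak{g}$ acting on $\HH^{\bullet}(\T)$. For $\alpha \in \HH^2(\T)$ with $q(\alpha)\neq 0$, the Yoneda operator $L_\alpha$ should fit into an $\mathfrak{sl}_2$-triple $(L_\alpha, H, \Lambda_\alpha)$, with $\Lambda_\alpha$ constructed using Serre duality (the shift by $2r$) to contract against $\alpha$ via $q$. The Jacobson-Morozov-type collection of these triples, together with the grading element, should generate a copy of $\mathfrak{so}(\HH^2(\T) \oplus \mathbb{C}^{2})$ acting on $\HH^{\bullet}(\T)$. Once this action is in place, the desired relation follows by decomposing $\HH^{\bullet}(\T)$ into irreducible $\mathfrak{g}$-representations and reading off the dimensions weighted by the grading: this is a purely representation-theoretic computation which matches Salamon's original derivation identically, so no new input is needed at this final stage.

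The main obstacle is the first step, namely constructing $q$ intrinsically and proving the Verbitsky presentation of $\HH^{\langle 2 \rangle}(\T)$. In the commutative case the proof relies in an essential way on the $SU(2)$-action coming from hyper-Kähler rotation, which has no direct categorical avatar. A secondary obstacle is establishing a categorical hard Lefschetz theorem for the action of generic classes in $\HH^2(\T)$, which is indispensable for producing the $\mathfrak{sl}_2$-triples. An alternative route would be to first settle Conjecture \ref{def-HK} and then deform an arbitrary hyper-Kähler category to a commutative one where Salamon's relation is known and Hochschild numbers are deformation invariant by Theorem \ref{semiconti}; but this approach is presently out of reach since it is not even known that every hyper-Kähler category admits a commutative limit, and in fact the final proposition of the paper suggests this is generally \emph{false} in dimension $4$.
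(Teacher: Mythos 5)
This statement is a \emph{conjecture} in the paper: the author gives no proof, only the observation that the computed Hochschild numbers $(1,16,206)$ of $\A_{\mathcal{P}^{2m}}$ satisfy the four-dimensional case, and the suggestion that one should test it on the examples of Theorem \ref{nchilbert}. So there is no proof of the paper's to compare yours against, and any complete argument would be new mathematics. Your proposal, as you yourself acknowledge, is a research programme rather than a proof: the intrinsic Beauville--Bogomolov form on $\mathrm{HH}^2(\T)$, the categorical Verbitsky presentation of $\mathrm{HH}^{\langle 2\rangle}(\T)$, categorical hard Lefschetz, and the resulting Lie algebra action on $\mathrm{HH}^{\bullet}(\T)$ are all open problems --- indeed the paper itself lists the Verbitsky structure theorem for Hochschild cohomology of a hyper-K\"ahler category as a desideratum, not an available tool.

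There is, however, a more serious gap than the missing foundations: even granting the entire Lie-theoretic package, your final step fails. Salamon's relation in the commutative case is \emph{not} a formal consequence of the Looijenga--Lunts--Verbitsky algebra action; it is proved by applying the Libgober--Wood identity (a consequence of Hirzebruch--Riemann--Roch, relating $\sum_p(-1)^p p^2\,\chi(\Omega^p_X)$ to the Chern numbers $c_1c_{2r-1}$ and $c_{2r}$) together with $c_1=0$ and the Hodge symmetry $h^{p,q}=h^{2r-p,q}$ induced by the holomorphic symplectic form. The coefficients $3j^2-r$ come from Todd-class computations, not from weights of irreducible modules, so your claim that ``no new input is needed at this final stage'' misdescribes Salamon's derivation. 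One can see structurally that representation theory alone cannot suffice: enlarging $\mathrm{HH}^{\bullet}(\T)$ by a trivial module for the Lie algebra, necessarily concentrated in middle degree $2r$ (the grading element kills it), preserves every structure you propose to construct, yet it increases the right-hand side $\frac{r}{2}\mathrm{hh}^{2r}$ while leaving the left-hand side, which only involves $\mathrm{hh}^{2r-j}$ for $j\geq 1$, untouched. Hence the genuinely missing ingredient is a noncommutative index theorem --- a categorical avatar of $c_1=0$ and of the Libgober--Wood identity for admissible subcategories --- and this is precisely the hard part. Your fallback route via Conjecture \ref{def-HK} and deformation invariance of Hochschild numbers (Theorem \ref{semiconti}) is correctly dismissed: Propositions \ref{defnonco} and \ref{noncodeflong} indicate that a hyper-K\"ahler category need not admit any commutative deformation.
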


Of course it would be interesting to first prove that this formula holds for the Hochschild numbers of the hyper-Kähler categories exhibited in Theorem \ref{nchilbert}. Note that the construction of relative compactified Prymians has been recently generalized to for arbitrary Enriques surfaces in \cite{arbasacca}. It is very likely that their construction will provide new examples of hyper-Kähler categories and that the Hochschild cohomology numbers of these categories will satisfy conjecture \ref{salamonconj}.

\bigskip

We close this section with a discussion of a conjecture made by Kuznetsov in \cite{kuzCY}:

\begin{conj}[\cite{kuzCY}, conjecture 5.8]
Let $X$ be a smooth projective variety of dimension $n$ and $\A \subset \DB(X)$ be a full admissible subcategory which is Calabi-Yau of dimension $n$. Then, there exists a birational morphism $X \longrightarrow X'$ such that $\A \simeq \DB(X')$.
\end{conj}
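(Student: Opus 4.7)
The final statement is Kuznetsov's conjecture itself, but the careful construction of $\A_{\mathcal{P}^{2m}}$ in the previous section strongly suggests the paper intends a refutation; my plan is therefore to produce a counter-example rather than a proof. I take $X=\widetilde{\mathcal{P}^{2m}}$ and $\A=\A_{\mathcal{P}^{2m}}\subset\DB(X)$: by Theorem~\ref{Prymian} this is a full admissible Calabi-Yau subcategory of dimension $4=\dim X$, exactly the set-up of the conjecture. It therefore suffices to rule out the existence of a smooth projective $X'$ with $\A\simeq\DB(X')$.

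The obstruction is numerical. Assume such $X'$ exists. The Serre functor of $\A$ is the shift by $4$, which forces $\omega_{X'}\simeq\OO_{X'}$ and $\dim X'=4$. Directly from the semi-orthogonal decomposition of Theorem~\ref{Prymian} and standard $\mathrm{Ext}$ computations against the exceptional $\OO_{E_i}(k)$, one checks that $\OO_{\widetilde{\mathcal{P}^{2m}}}\in\A$ and that $\mathbb{C}(\tilde{p})\in\A$ for generic $\tilde{p}\in\widetilde{\mathcal{P}^{2m}}$. Theorem~\ref{homounitiso} then transfers the homological unit $\mathbb{C}[t]/(t^3)$ (with $t$ in degree $2$) of $\A$ to an isomorphism $H^\bullet(\OO_{X'})\simeq\mathbb{C}[t]/(t^3)$, and Proposition~\ref{comtononcom} upgrades $X'$ to a smooth projective compact hyper-K\"ahler fourfold. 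Hochschild cohomology being derived invariant, Theorem~\ref{Prymian} yields $\HH^2(X')=16$; combining the Hochschild-Kostant-Rosenberg isomorphism with the symplectic symmetries $h^{3,1}=h^{1,1}$ and $h^{0,2}=h^{2,0}=1$ valid on any hyper-K\"ahler fourfold gives $\HH^2(X')=2+h^{1,1}(X')$, whence $b_2(X')=16$. This contradicts Guan's theorem, which restricts $b_2$ of an irreducible hyper-K\"ahler fourfold to $\{3,4,5,6,7,8,23\}$.

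For the stronger ``small deformation'' version stated in the introduction, the same argument applies verbatim: Theorem~\ref{semiconti} keeps $\HH^2(\D_b)=16$ constant along a smooth deformation, and Proposition~\ref{smalldef} keeps the hyper-K\"ahler property in a neighborhood of $0$, so any candidate $\D_b\simeq\DB(X'_b)$ forces $X'_b$ to be a projective hyper-K\"ahler fourfold with $b_2=16$, again contradicting Guan. I expect the main obstacle to lie in the middle step: extracting the hyper-K\"ahler condition on $X'$ from an abstract equivalence $\A\simeq\DB(X')$ without a prescribed ambient embedding requires the full strength of the invariance theorem for homological units (Theorem~\ref{homounitiso} and its refinements in \cite{homounit}), and in the deformed case one needs a version that is robust under varying the family $\X\to B$. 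Once that transfer is secured, the contradiction with Guan's numerical range is immediate.
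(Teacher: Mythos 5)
Your proposal is correct and matches the paper's own treatment: the paper likewise refutes the conjecture (Proposition \ref{defnonco}) by taking $\A_{\mathcal{P}^{2m}}\subset\DB(\widetilde{\mathcal{P}^{2m}})$, using the presence of $\OO_{\widetilde{\mathcal{P}^{2m}}}$ and generic skyscrapers in $\A_{\mathcal{P}^{2m}}$ together with Theorem \ref{homounitiso} and Proposition \ref{comtononcom} to force any hypothetical $X'$ to be a hyper-K\"ahler fourfold, and then contradicting Guan's bound via $b_2(X')=16$ extracted from the Hochschild numbers of Theorem \ref{Prymian}. The only cosmetic differences are that the paper runs the whole argument inside the deformation framework (Lemmas \ref{defstructural}, \ref{defpoints}, Proposition \ref{smalldef}, Theorem \ref{semiconti}), recovering the undeformed case as the fiber $b=0$, and bookkeeps via Hochschild homology converted to cohomology by the CY-4 shift, whereas you use Hochschild cohomology directly.
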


We will prove that this conjecture is far from being true:

\begin{prop} \label{defnonco}
The category $\A_{\mathcal{P}^{2m}}$ is not equivalent to the derived category of any projective variety. In fact, a small deformation of $\A_{\mathcal{P}^{2m}}$ is never equivalent to the derived category of a projective variety.
\end{prop}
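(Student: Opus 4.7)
The plan is to argue by contradiction. Suppose first that $\A_{\mathcal{P}^{2m}} \simeq \DB(X)$ for some smooth projective variety $X$. The initial step is to force $X$ to be a compact hyper-K\"ahler fourfold: Proposition \ref{comtononcom} turns the hyper-K\"ahler property of $\A_{\mathcal{P}^{2m}}$ (established in Theorem \ref{Prymian}) into the hyper-K\"ahler property of $X$, and the dimension follows from the fact that the Serre functor of $\A_{\mathcal{P}^{2m}}$ is the shift by $4$. Hochschild cohomology is invariant under equivalence, so $\HH^\bullet(X) \simeq \HH^\bullet(\A_{\mathcal{P}^{2m}})$; in particular $\mathrm{hh}^2(X) = 16$.

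The next step is to translate this into a constraint on the Betti numbers of $X$. For a compact hyper-K\"ahler fourfold, the holomorphic symplectic form yields an isomorphism $\bigwedge^q T_X \simeq \Omega^q_X$, so the Hochschild-Kostant-Rosenberg decomposition collapses to $\mathrm{hh}^k(X) = \sum_{p+q=k} h^{p,q}(X) = b_k(X)$. Hence $b_2(X) = 16$. I would then invoke Guan's theorem on Betti numbers of irreducible compact hyper-K\"ahler fourfolds, which asserts that $b_2$ is either $\leq 8$ or equal to $23$. Since $16$ is excluded, this yields the contradiction and proves the first assertion.

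For the deformation statement, I would combine two results already proven in section $3$. Proposition \ref{smalldef} guarantees that, in a neighborhood $0 \in U \subset B$ of the central fiber of a smooth deformation $\D$ of $\A_{\mathcal{P}^{2m}}$, every category $\D_b$ remains compact hyper-K\"ahler of dimension $4$. Theorem \ref{semiconti} ensures that the Hochschild homology numbers of $\D_b$ are constant on $U$, and Serre duality in the Calabi-Yau setting ($\HH^k \simeq \HH_{4-k}$) transfers this invariance to Hochschild cohomology. Therefore $\mathrm{hh}^2(\D_b) = 16$ for every $b \in U$, and the argument of the previous paragraph applied to $\D_b$ in place of $\A_{\mathcal{P}^{2m}}$ excludes any equivalence $\D_b \simeq \DB(X)$ with $X$ projective.

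The main obstacle is the external input of Guan's classification of admissible values of $b_2$ for hyper-K\"ahler fourfolds. Without such a classification-type bound, one could not rule out the existence of a hypothetical commutative compact hyper-K\"ahler fourfold with exactly the Hochschild data of $\A_{\mathcal{P}^{2m}}$; the rest of the argument is a bookkeeping exercise combining HKR for hyper-K\"ahler manifolds, Proposition \ref{comtononcom}, Proposition \ref{smalldef}, and Theorem \ref{semiconti}, all of which are already available.
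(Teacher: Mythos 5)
Your overall strategy (force the hypothetical variety to be a hyper-K\"ahler fourfold, compute $b_2 = 16$, contradict Guan's bound) has the same shape and the same endgame as the paper's proof, and the deformation bookkeeping via Proposition \ref{smalldef} and Theorem \ref{semiconti} is also what the paper does. But there is a genuine gap at your very first step, and it is precisely the central subtlety of this paper. Proposition \ref{comtononcom} says that $\DB(X)$ is compact hyper-K\"ahler \emph{with respect to its embedding in $\DB(X)$} if and only if $X$ is compact hyper-K\"ahler; to invoke it you must know that the homological unit of $\DB(X)$ relative to itself, namely the graded algebra $H^{\bullet}(\OO_X)$, is $\mathbb{C}[t]/(t^{3})$. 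What you are actually given is that $\A_{\mathcal{P}^{2m}}$ is hyper-K\"ahler \emph{with respect to its embedding in} $\DB(\widetilde{\mathcal{P}^{2m}})$, and this notion is not known to be invariant under abstract equivalence: the homological unit is defined through the trace splitting in the ambient category, and the paper states explicitly (in the introduction and right after Theorem \ref{hk-crepant}) that embedding-independence is an open problem. So the sentence ``Proposition \ref{comtononcom} turns the hyper-K\"ahler property of $\A_{\mathcal{P}^{2m}}$ into the hyper-K\"ahler property of $X$'' is not a legitimate deduction. Without it you do not know that $X$ carries a holomorphic symplectic form, and then neither the collapse $\mathrm{hh}^k(X) = b_k(X)$ nor Guan's theorem is available. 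Hochschild (co)homology invariance alone does not rescue you: $\mathrm{hh}_2 = 16$ only constrains a sum of several Hodge numbers of $X$, not $h^{2,0}(X)$ itself.

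The missing ingredient is Theorem \ref{homounitiso}: a fully faithful functor between derived categories of smooth projective fourfolds transports the graded algebra $H^{\bullet}(\OO)$ provided the structure sheaf and generic skyscraper sheaves of the target lie in the image. This is exactly how the paper pins down $H^{\bullet}(\OO_Y) \simeq \mathbb{C}[t]/(t^{3})$ for the hypothetical variety $Y$: one composes the equivalence $\DB(Y) \simeq \D_{b_0}$ with the inclusion $\D_{b_0} \subset \DB(\X_{b_0})$ and verifies the two hypotheses. This is also why the paper needs Lemma \ref{defpoints} (persistence of generic skyscrapers $\CC(x_b) \in \D_b$ under deformation) in addition to Lemma \ref{defstructural} --- your proposal never invokes Lemma \ref{defpoints}, nor the fact that $\OO_{\widetilde{\mathcal{P}^{2m}}}$ and generic $\CC(x)$ lie in $\A_{\mathcal{P}^{2m}}$ in the undeformed case. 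Once Theorem \ref{homounitiso} is inserted and its hypotheses are checked via these two lemmas, your remaining computation --- invariance of Hochschild numbers, the symplectic HKR identification, $b_2 = 16$, and Guan's bound --- goes through exactly as in the paper.
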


The deformation theory we use here is the one developed in section 3 of this paper.

\begin{proof}

Let $\D$ be a deformation of $\A_{\mathcal{P}^{2m}}$. This is the data of a smooth connected algebraic variety $B$ and a smooth projective morphism $p : \X \longrightarrow B$ such that:
\begin{itemize}
\item $\X_{0} = \widetilde{\mathcal{P}^{2m}}$,

\item The category $\D$ is full admissible in $\DB(\X)$ and it is $B$-linear with the property that $ \EE_{0} := \EE \otimes_{\OO_{\X \times_{B} \X}} \OO_{\X_{0} \times \X_{0}} \in \DB(\X_{0} \times \X_{0})$ is the kernel of the projection $\DB(\X_{0}) \rightarrow \A_{\mathcal{P}^{2m}}$, where $\EE \in \DB(\X \times_{B} \X)$ is the kernel representing the projection functor $\DB(\X) \rightarrow \D$.

\end{itemize}

\bigskip

As in section $3$, for any $b \in B$, we denote by $\D_b$ the full admissible subcategory of $\DB(\X_b)$ whose projection functor is given by the kernel $\EE \otimes_{\OO_{\X \times_{B} \X}} \OO_{\X_{b} \times \X_{b}}$. Since $\OO_{\X_{0}} \in \A_{\mathcal{P}^{2m}}$ and $\CC(x_0) \in \A_{\mathcal{P}^{2m}}$, for generic $x_0 \in \X_0$, we know by lemmas \ref{defstructural} and \ref{defpoints} that there exists an open subset $0 \in U \subset B$ such that:

\begin{itemize}
\item for all $b \in U$, we have $\OO_{\X_b} \in \D_b$,

\item for all $b \in U$, we have $\CC(x_b) \in \D_b$, for generic $x_b \in \X_b$.

\end{itemize}

Furthermore, up to shrinking $U$, proposition \ref{smalldef} shows that for all $b \in U$, the category $\D_b$ is hyper-Kähler of dimension $4$ (with respect to its embedding in $\DB(\X_b)$).

\bigskip

Assume that there exists $b_0 \in B$ such that $\D_{b_0} \simeq \DB(Y)$, for some $Y$ projective. We immediately see that $Y$ is smooth projective of dimension $4$ with trivial canonical bundle. By hypothesis, the homological unit of $\D_{b_0}$ with respect to its embedding in $\DB(\X_{b_0})$ is $\mathbb{C}[t]/t^{3}$. Since $\OO_{\X_{b_0}} \in \D_{b_0}$, we have:

\begin{equation*}
H^{\bullet}(\OO_{\X_0}) \simeq \mathbb{C}[t]/{t^3},
\end{equation*}
with $t$ in degree $2$. By Theorem \ref{homounitiso}, we have $H^{\bullet}(\OO_Y) \simeq \CC[t]/{t^3}$, with $t$ in degree $2$. By proposition \ref{comtononcom}, we deduce that $Y$ is hyper-Kähler of dimension $4$. Theorem \ref{semiconti} and Theorem  \ref{Prymian} imply that the Hochschild numbers of $Y$ are:

\begin{itemize}
\item $\mathrm{hh}_0 = 206$
\item $\mathrm{hh}_{2} = \mathrm{hh}_{-2} = 16$
\item $\mathrm{hh}_{4} = \mathrm{hh}_{-4} = 1$.
\end{itemize}
But $Y$ being holomorphically symplectic, the Hochschild-Kostant-Rosenberg isomorphism implies that the Betti numbers of $Y$ are:

\begin{itemize}
\item ${b}_0 = b_8 = 1$
\item ${b}_{2} = b_{6} = 16$
\item $b_4 = 206$.
\end{itemize}
This is a contradiction. Indeed, it is proved in \cite{guan} that the second Betti number of a hyper-Kähler fourfold is either less than $8$ or equal to $23$. This concludes the proof that a small deformation of $\A_{\mathcal{P}^{2m}}$ can not be equivalent to the derived category of a projective variety.
\end{proof}

\bigskip

If one assumes that the deformation of $\A_{\mathcal{P}^{2m}}$ is Calabi-Yau of dimension $4$, contains $\OO_{\X}$ and the structure sheaf of a generic point, then one has a stronger statement than proposition \ref{defnonco}:

\begin{prop} \label{noncodeflong}
Let $\D$ be a deformation of $\A_{\mathcal{P}^{2m}}$ inside $\DB(\X)$ over $B$, for some $p : \X \longrightarrow B$ smooth projective. Assume that $\OO_{\X_b} \in \D_b$, that $\CC(x_b) \in\D_b$, for generic $x_b \in \X_b$ and that $\D_b$ is Calabi-Yau of dimension $4$, for all $b \in B$. Then, for all $b \in B$, the category $\D_b$ is never equivalent to the derived category of a projective variety.
\end{prop}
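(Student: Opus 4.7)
The plan is to mirror the proof of Proposition \ref{defnonco}, but to bypass every appeal to openness by using the fact that the hypotheses now hold at every $b \in B$. Suppose for contradiction that $\D_{b_0} \simeq \DB(Y)$ for some $b_0 \in B$ and some smooth projective $Y$; since the Calabi-Yau of dimension $4$ property is a categorical invariant, $Y$ is then smooth projective of dimension $4$ with $\omega_Y \simeq \OO_Y$. The first step is to identify $H^\bullet(\OO_{\X_{b_0}})$ as a graded algebra. Because $p : \X \to B$ is smooth projective, Hodge numbers are constant along $B$, so the Hodge diamond of $\X_0 = \widetilde{\mathcal{P}^{2m}}$ computed in the appendix yields $\dim H^i(\OO_{\X_{b_0}}) = 1$ for $i = 0, 2, 4$ and $0$ otherwise. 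The algebra structure is pinned down by the Yoneda argument used in Proposition \ref{def-hk4}: since $\OO_{\X_{b_0}} \in \D_{b_0}$ and $\D_{b_0}$ is Calabi-Yau of dimension $4$, the Yoneda pairing $H^2(\OO_{\X_{b_0}}) \otimes H^2(\OO_{\X_{b_0}}) \to H^4(\OO_{\X_{b_0}})$ coincides with the Serre duality pairing and is hence non-degenerate; as both sides are one-dimensional, a generator $t$ of $H^2$ satisfies $t^2 \neq 0$, and $H^\bullet(\OO_{\X_{b_0}}) \simeq \CC[t]/(t^3)$ as graded algebras.

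Next I would apply Theorem \ref{homounitiso} to the fully faithful composition $\Phi : \DB(Y) \to \D_{b_0} \hookrightarrow \DB(\X_{b_0})$ coming from the assumed equivalence. The hypotheses $\OO_{\X_{b_0}} \in \D_{b_0} = \Phi(\DB(Y))$ and $\CC(x_{b_0}) \in \D_{b_0}$ for generic $x_{b_0} \in \X_{b_0}$, together with the fact that $Y$ and $\X_{b_0}$ are both smooth projective of dimension $4$, are exactly what Theorem \ref{homounitiso} requires. It yields a graded algebra isomorphism $H^\bullet(\OO_Y) \simeq H^\bullet(\OO_{\X_{b_0}}) \simeq \CC[t]/(t^3)$. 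Combined with $\omega_Y \simeq \OO_Y$, Proposition \ref{comtononcom} then forces $Y$ to be a compact hyper-Kähler fourfold.

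The contradiction is obtained by the Hochschild-number argument of Proposition \ref{defnonco}. By Theorem \ref{semiconti} and connectedness of $B$, the Hochschild homology numbers of $\D_b$ are constant on $B$ and therefore equal to those of $\D_0 = \A_{\mathcal{P}^{2m}}$ computed in Theorem \ref{Prymian}; via the equivalence $\Phi$ they coincide with the Hochschild numbers of $Y$. Using that the Serre functor on $\D_{b_0} \simeq \DB(Y)$ is $[4]$ (so $\HH^k \simeq \HH_{k-4}$), and that the twisted HKR isomorphism identifies $\HH^\bullet(Y) \simeq H^\bullet(Y, \CC)$ as graded rings for a compact hyper-Kähler $Y$, one reads off $b_2(Y) = \mathrm{hh}^2(\A_{\mathcal{P}^{2m}}) = 16$. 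This contradicts the theorem of \cite{guan} that $b_2$ of a hyper-Kähler fourfold lies in $\{3, \ldots, 8\} \cup \{23\}$. The real difficulty compared to Proposition \ref{defnonco} is that the openness lemmas \ref{defstructural}--\ref{smalldef} are unavailable at an arbitrary $b_0 \in B$: the strengthened hypotheses of the statement supply pointwise exactly what those lemmas guaranteed near $0$, and the decisive input remains Theorem \ref{homounitiso}, which upgrades the triangulated equivalence $\Phi$ to an identification of the graded algebras $H^\bullet(\OO)$.
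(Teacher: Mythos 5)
Your proof is correct, and it takes a genuinely different route from the paper's. The paper proves Proposition \ref{noncodeflong} by invoking its deformation machinery: it uses Proposition \ref{smalldef} to say the locus of $b$ where $\D_b$ is hyper-K\"ahler is open and non-empty, shrinks $B$ so that every $b \neq b_0$ lies in that locus, and then applies the ``long-time'' result Proposition \ref{def-hk4} at $b_0$ to conclude that $\D_{b_0}$ is hyper-K\"ahler with respect to its embedding, after which it finishes exactly as in Proposition \ref{defnonco}. You bypass Propositions \ref{smalldef} and \ref{def-hk4} entirely and work directly at $b_0$: you transport the dimensions $h^{0,i}$ from the central fiber $\widetilde{\mathcal{P}^{2m}}$ (computed in the appendix) to $\X_{b_0}$ by deformation invariance of Hodge numbers in the smooth projective family, and you pin down the ring structure of $H^{\bullet}(\OO_{\X_{b_0}})$ by the Serre-duality/Yoneda pairing argument — which is precisely the engine inside the proof of Proposition \ref{def-hk4}, here usable at $b_0$ because the CY-$4$ property and $\OO_{\X_{b_0}} \in \D_{b_0}$ are hypotheses of the statement. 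From there both arguments coincide: Theorem \ref{homounitiso}, Proposition \ref{comtononcom}, Theorem \ref{semiconti} together with Theorem \ref{Prymian}, and Guan's bound on $b_2$ of hyper-K\"ahler fourfolds. Your route buys two things: it avoids the somewhat delicate shrinking step in the paper (one must arrange a connected base containing $b_0$ all of whose other points are hyper-K\"ahler before Proposition \ref{def-hk4} applies), and it shows that the hypotheses are really only needed at the single fiber $b_0$ under scrutiny — the conditions at the other points of $B$ play no role in your argument, so you in fact prove a slightly stronger statement. The paper's version, by contrast, recycles its already-established deformation results, which is more economical given the surrounding section but less self-contained at the point $b_0$.
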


\begin{proof}
Assume that there exists $b_0 \in B$ such that $\D_b \simeq \DB(Y)$, where $Y$ is projective. By hypothesis, this immediately implies that $Y$ is smooth projective of dimension $4$ with trivial canonical bundle. By proposition \ref{smalldef}, the set of $b \in B$ such that $\D_b$ is hyper-Kähler of dimension $4$ (with respect to its embedding in $\DB(\X_b)$) is open (and non empty). 

\bigskip

Hence, up to shrinking $B$, one can assume that for all $b \neq b_0$, the category $\D_{b_0}$ is hyper-Kähler of dimension $4$ (with respect to its embedding in $\DB(\X_{b_0})$. Since $\D_{b_0}$ is Calabi-Yau of dimension $4$ and contains $\OO_{\X_{b_0}}$, we can apply proposition \ref{def-hk4} and we find that $\D_{b_0}$ is hyper-Kähler of dimension $4$ (with respect to its embedding in $\DB(\X_{b_0})$). One finishes the proof exactly as in the proof of proposition \ref{defnonco} above.

\end{proof}

\bigskip

 Of course, in view of conjecture \ref{def-HK}, one would expect that the hypotheses $\OO_{\X_b} \in \D_b$, $\CC(x_b) \in \D_b$ for generic $x_b \in \X_b$, and $\D_b$ CY-$4$, for all $b \in B$, are superfluous in the statement of proposition \ref{noncodeflong}. If this expectation is correct, then \textbf{any deformation} of $\A_{\mathcal{P}^{2m}}$ would \textbf{never} be equivalent to the derived category of a projective variety. In particular, this would imply that the moduli space of hyper-Kähler categories of dimension $4$ (if such an object exists) contains a component which is purely non-commutative!

\end{subsection}

\end{section}

\newpage

\bibliographystyle{alpha}

\bibliography{bibliHKC2}

\newpage

\appendix

\section{Hodge numbers of a resolution of the Markushevich--Tikhomirov relative compactified Prymian}

\begin{center}
 by {\Large Grégoire Menet} 

\end{center}

\bigskip
\bigskip
\bigskip

Let $\mathcal{P}^{0}$ be the Markushevitch--Tikhomirov variety defined in \cite{Markou}.
By Corollary 5.7 of \cite{Markou}, $\mathcal{P}^{0}$ is a projective irreducible symplectic V-manifold of dimension 4 whose singularities are 28 points of analytic type $(\mathbb{C}^{4}/\left\{\pm 1\right\},0)$. 
These singularities can be solved with one blow-up. Let $r:\widetilde{\mathcal{P}^{0}}\rightarrow\mathcal{P}^{0}$ be such a blow-up with
$D_{1},...,D_{28}$ the exceptional divisors.
We want to calculate the Hodge numbers of $\widetilde{\mathcal{P}^{0}}$.
We will proceed as follows; first, we will calculate the Hodge numbers of $\mathcal{P}^{0}$. Then, we will deduce the Hodge numbers of $\widetilde{\mathcal{P}^{0}}$ using the following lemma that can be found in \cite[Section 2.5]{Peters}.
\begin{lem}\label{PetersL}
The pullback $r^*:H^i(X,\mathcal{P}^{0})\rightarrow H^i(X,\widetilde{\mathcal{P}^{0}})$ is a morphism of Hodge structure.
\end{lem}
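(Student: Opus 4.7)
The plan is to deduce this from the general functoriality of Deligne's mixed Hodge structures under proper morphisms of complex algebraic varieties, combined with the fact that both sides carry pure Hodge structures.

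First I would recall that since $\mathcal{P}^{0}$ has only quotient singularities (locally analytically isomorphic to $\mathbb{C}^{4}/\{\pm 1\}$), it is a V-manifold in the sense of Satake. A fundamental result of Steenbrink (cf.\ \cite{Peters}, and also Steenbrink's original paper on mixed Hodge structures of hypersurface singularities) asserts that for a compact Kähler V-manifold, the rational cohomology $H^{i}(\mathcal{P}^{0},\mathbb{Q})$ carries a pure Hodge structure of weight $i$, realized by harmonic forms invariant under the local isotropy groups. Since $\widetilde{\mathcal{P}^{0}}$ is smooth and projective, its cohomology also carries the classical pure Hodge structure of weight $i$.

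Next, the morphism $r:\widetilde{\mathcal{P}^{0}}\to\mathcal{P}^{0}$ is a proper algebraic morphism between compact complex algebraic varieties, so Deligne's theory of mixed Hodge structures gives that the pullback
\[
r^{*}:H^{i}(\mathcal{P}^{0},\mathbb{Q})\longrightarrow H^{i}(\widetilde{\mathcal{P}^{0}},\mathbb{Q})
\]
is a morphism of mixed Hodge structures. Because both the source and the target are pure of weight $i$, this morphism automatically preserves the Hodge filtration and the weight filtration strictly, hence it is a morphism of pure Hodge structures.

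The main (mild) obstacle is to be certain that the V-manifold Hodge structure on $H^{i}(\mathcal{P}^{0})$ coincides with the weight-$i$ piece of the Deligne mixed Hodge structure used in functoriality; this is indeed the content of Steenbrink's theorem for V-manifolds, to which Peters--Steenbrink refer. Once this identification is accepted, the lemma is a direct consequence of the functoriality of mixed Hodge structures, so no further computation is necessary.
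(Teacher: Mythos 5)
Your proof is correct, but it is worth pointing out that the paper does not actually prove this lemma at all: the appendix simply cites it from Peters--Steenbrink \cite[Section 2.5]{Peters}, where the pure Hodge structure on the cohomology of a compact K\"ahler V-manifold is constructed directly (via the complex of V-forms) and its functoriality is part of that package. Your route is a genuine derivation rather than a citation: you invoke Deligne's functoriality of mixed Hodge structures for the algebraic morphism $r$, purity of $H^i$ for the smooth projective variety $\widetilde{\mathcal{P}^{0}}$, and purity of $H^i(\mathcal{P}^{0})$ as a compact V-manifold, so that a morphism of mixed Hodge structures between pure structures of the same weight is automatically a morphism of pure Hodge structures. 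You correctly isolate the one nontrivial input, namely that the V-manifold (orbifold/Dolbeault) Hodge structure on $H^i(\mathcal{P}^{0})$ --- which is the one Menet actually uses in the subsequent computations, via Fujiki --- agrees with the weight-$i$ Deligne structure; this identification is Steenbrink's theorem and is exactly what the paper's citation points to. So both arguments rest on the same deep fact; yours makes the logical dependence explicit at the cost of invoking the full mixed Hodge machinery (where, incidentally, properness of $r$ is not needed --- Deligne functoriality holds for arbitrary morphisms of complex algebraic varieties), while the paper's citation stays entirely within the pure Hodge theory of V-manifolds.
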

The reader non-familiar with the Hodge structure for a K\"ahler V-manifolds can also read
\cite[Section 1]{Fujiki}. 
Let's calculate the Hodge numbers of $\mathcal{P}^{0}$.

\begin{lem}\label{P0}
The Hodge diamond of $\mathcal{P}^{0}$ is:
$$\begin{matrix}
& & & &1& & & &\\
& & &0 & &0& & &\\
& &1 & & 14 & & 1 & &\\
& 0 & & 0 & & 0 & & 0 &\\
1 & & 14 & & 148 & & 14 & & 1.\\
& 0 & & 0 & & 0 & & 0 &\\
& &1 & & 14 & & 1 & &\\
& & &0 & &0& & &\\
& & & &1& & & & 
\end{matrix}$$
\end{lem}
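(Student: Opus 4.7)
The plan is to combine the Markushevich--Tikhomirov birational description of $\mathcal{P}^0$ with V-manifold Hodge theory. Since $\mathcal{P}^0$ is a projective irreducible symplectic V-manifold of dimension $4$ whose only singularities are the isolated quotient points $\mathbb{C}^4/\pm 1$, its rational cohomology carries a pure Hodge structure (Fujiki, Steenbrink). The entries $h^{0,0} = 1$, $h^{1,0} = 0$ and $h^{2,0} = 1$ are immediate from the irreducible symplectic structure, so the real content is to compute $h^{1,1}$, $h^{2,2}$ and $h^{3,1}$. The strategy is to transfer the computation to a birational model whose Hodge theory is directly accessible.

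Concretely, I would proceed in three stages. First, I would use Markushevich--Tikhomirov's theorem that $\mathcal{P}^0$ is birational to a quotient $Y := \mathrm{Hilb}^2(S)/\iota$ by an explicit symplectic involution $\iota$ (constructed from the Galois involution $\tau : S \to S$ of $\rho$ together with a Mukai-type dualization that twists $\tau^{[2]}$ into a symplectic involution). Combined with a V-manifold analogue of Huybrechts' birational invariance theorem for irreducible symplectic manifolds, this reduces the problem to computing the Hodge diamond of $Y$. Second, since $Y$ is the global quotient of a smooth projective variety by a finite group, one has $H^{p,q}(Y) \cong H^{p,q}(\mathrm{Hilb}^2(S))^{\iota}$; the Hodge diamond of $\mathrm{Hilb}^2(S)$ is classical (G\"ottsche's formula), and the action of $\iota$ on $H^*(\mathrm{Hilb}^2(S))$ is controlled by the action of the underlying involution on $H^*(S)$ via the Nakajima/Lehn--Sorger Fock-space presentation. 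Third, the action of $\tau^*$ on $H^*(S)$ is fixed by the K3 double cover geometry: the invariant part $H^2(S, \mathbb{Q})^{\tau^*}$ is the pull-back $\rho^* H^2(X, \mathbb{Q})$, of rank $8$ (since $X$ is Del Pezzo of degree $2$), while $\tau^* \sigma_S = -\sigma_S$. Plugging these numerical data into the Fock-space description yields each Hodge number of $Y$, and by the birational invariance these are exactly the Hodge numbers of $\mathcal{P}^0$.

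The main obstacle is the V-manifold birational invariance of Hodge numbers in the first stage. Huybrechts' smooth-case proof uses twistor families, which do not transpose directly to V-manifolds; the analysis of the exceptional contributions on a common smooth resolution $\widetilde{Z}$ dominating both $\mathcal{P}^0$ and $Y$ is therefore the technical heart of the argument. By Kaledin's result that symplectic varieties admit no divisorial crepant contractions the birational map is small, which controls divisorial contributions; but the task is still complicated by the fact that the singular locus of $\mathcal{P}^0$ (twenty-eight $\mathbb{C}^4/\pm 1$ points) differs structurally from that of $Y$ (arising from the $\iota$-fixed locus on $\mathrm{Hilb}^2(S)$), so one has to track the exceptional divisors on each side via the decomposition theorem and ensure their Hodge contributions balance. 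Once this is handled, the remaining extraction of $\iota$-invariants from the Fock-space cohomology of $\mathrm{Hilb}^2(S)$ is a combinatorial bookkeeping using G\"ottsche's formula and Nikulin's description of non-symplectic K3 involutions.
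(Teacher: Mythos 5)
Your proposal takes a genuinely different route from the paper, and it contains a gap that is fatal as written. The paper's own proof is essentially a citation argument: Fujiki's structure theory for projective irreducible symplectic V-manifolds of dimension four gives $h^{0,0}=h^{2,0}=h^{4,0}=1$, $h^{1,0}=h^{3,0}=0$ and $h^{1,1}=h^{3,1}$, and the Betti numbers $b_2(\mathcal{P}^0)=16$, $b_3(\mathcal{P}^0)=0$, $b_4(\mathcal{P}^0)=178$ are simply quoted from Proposition 3.5 of \cite{Menet}; the diamond then follows by arithmetic. You propose instead to recompute these numbers via the birational model $Y=\mathrm{Hilb}^2(S)/\iota$, resting on a ``V-manifold analogue of Huybrechts' birational invariance theorem.'' That principle is not merely unproven (as you yourself flag): it is \emph{false} in the generality in which you invoke it, and the present example is itself a counterexample. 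By Mongardi's classification, a symplectic involution of a manifold of $K3^{[2]}$-type has anti-invariant lattice $E_8(-2)$, so $b_2(Y)=\dim H^2(\mathrm{Hilb}^2(S),\mathbb{Q})^{\iota}=23-8=15$, and, since $H^4(\mathrm{Hilb}^2(S),\mathbb{Q})=\mathrm{Sym}^2H^2(\mathrm{Hilb}^2(S),\mathbb{Q})$, one gets $b_4(Y)=\binom{16}{2}+\binom{9}{2}=156$. But $b_2(\mathcal{P}^0)=16$ and $b_4(\mathcal{P}^0)=178$. So $Y$ and $\mathcal{P}^0$ are birational projective symplectic V-manifolds with \emph{different} Hodge numbers, and transporting invariants across the birational map, as you propose, would output $h^{1,1}=13$ and a wrong $h^{2,2}$.

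The auxiliary claims propping up that step also fail. ``Kaledin's result that symplectic varieties admit no divisorial crepant contractions'' is not a theorem: blowing up the transverse $A_1$-locus of $Y$ (the image of the $\iota$-fixed K3 surface) is precisely a crepant \emph{divisorial} contraction --- in the Wierzba--Wi\'sniewski/Namikawa classification of crepant contractions of symplectic fourfolds both small and divisorial types occur --- and this divisor accounts exactly for the discrepancies $16=15+1$ and $178=156+22$. Hence the birational map $\mathcal{P}^0\dashrightarrow Y$ is not small, and the correct intermediate object is not $Y$ but its partial resolution $M'$ along the $A_1$-surface, a symplectic V-manifold whose only singularities are the $28$ points of type $\mathbb{C}^4/\{\pm 1\}$; one must then still prove that the explicit Markushevich--Tikhomirov birational map $\mathcal{P}^0\dashrightarrow M'$ preserves cohomology, which is in substance what Menet's earlier paper does in order to obtain the Betti numbers that the present proof quotes. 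Finally, your statement that the action of the involution on $H^{*}(\mathrm{Hilb}^2(S))$ is ``controlled by $\tau^{*}$ on $H^{*}(S)$ via the Fock-space presentation'' is also incorrect: the relevant involution is not the natural one $\tau^{[2]}$ (which is anti-symplectic and whose invariant part of $H^2$ has rank $8+1=9$), but its twist by the Mukai-type dualization, and the cohomological action of that twist is pinned down lattice-theoretically (invariant rank $15$), not by applying Nakajima operators to $\tau^{*}$-invariants. Your overall instinct --- reduce to the quotient of $\mathrm{Hilb}^2(S)$ --- is how the quoted Betti numbers were originally established, but the proposal as written both rests on a false general principle and produces incorrect numbers.
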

\begin{proof}

By Paragraph 6.3 of \cite{Fujiki}, for a projective irreducible symplectic V-manifold of dimension four, we have:
$h^{0,0}=h^{2,0}=h^{4,0}=1$; $h^{3,0}=h^{1,0}=0$; $h^{1,1}=h^{3,1}$.
Hence, it only remains to calculate the three Hodge numbers $h^{1,1}$, $h^{2,1}$ and $h^{2,2}$. 

By Proposition 3.5 of \cite{Menet}, we have: $$b_{2}(\mathcal{P})=16,\ 
b_{3}(\mathcal{P}^{0})=0\ \text{and}\
b_{4}(\mathcal{P}^{0})=178.$$
Since we know that $h^{2,0}(\mathcal{P}^{0})=1$, we have $h^{1,1}(\mathcal{P}^{0})=14$.
Since $b_{3}(\mathcal{P}^{0})=0$, we have $h^{2,1}(\mathcal{P}^{0})=0$.
And since $h^{4,0}(\mathcal{P}^{0})=1$ and $h^{3,1}(\mathcal{P}^{0})=14$, we have $h^{2,2}(\mathcal{P}^{0})=148$.
\end{proof}

Now, we state the Hodge numbers of $\widetilde{\mathcal{P}^{0}}$.
\begin{prop}
The Hodge diamond of $\widetilde{\mathcal{P}^{0}}$ is:
$$\begin{matrix}
& & & &1& & & &\\
& & &0 & &0& & &\\
& &1 & & 42 & & 1 & &\\
& 0 & & 0 & & 0 & & 0 &\\
1 & & 14 & & 176 & & 14 & & 1.\\
& 0 & & 0 & & 0 & & 0 &\\
& &1 & & 42 & & 1 & &\\
& & &0 & &0& & &\\
& & & &1& & & & 
\end{matrix}$$
\end{prop}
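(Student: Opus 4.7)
My plan is to compute $H^\bullet(\widetilde{\mathcal{P}^{0}},\mathbb{C})$ from $H^\bullet(\mathcal{P}^{0},\mathbb{C})$ plus the contribution of the $28$ exceptional divisors, and then to promote the resulting vector-space decomposition to a decomposition of Hodge structures using Lemma \ref{PetersL}.

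First I would identify the exceptional locus. Each singularity of $\mathcal{P}^{0}$ is analytically isomorphic to $(\mathbb{C}^4/\{\pm 1\},0)$, which is the affine cone over the second Veronese embedding $v_2(\mathbb{P}^3)\subset\mathbb{P}^9$; blowing up the vertex gives the total space of $\mathcal{O}_{\mathbb{P}^3}(-2)\to\mathbb{P}^3$. Hence each exceptional divisor $D_i$ is isomorphic to $\mathbb{P}^3$, and $H^\bullet(D_i,\mathbb{C})$ is one-dimensional in each even degree $0,2,4,6$ and of pure Hodge type $(p,p)$ in degree $2p$.

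Next, I would compare Mayer--Vietoris sequences for the covers $\mathcal{P}^{0}=(\mathcal{P}^{0}\setminus\mathrm{Sing})\cup V$ and $\widetilde{\mathcal{P}^{0}}=(\widetilde{\mathcal{P}^{0}}\setminus\bigsqcup D_i)\cup\widetilde V$, where $V$ is a disjoint union of contractible neighborhoods of the $28$ singular points and $\widetilde V=r^{-1}(V)$ retracts onto $\bigsqcup D_i$. The link of each singularity is $S^7/\{\pm 1\}\cong\mathbb{RP}^7$, whose complex cohomology is concentrated in degrees $0$ and $7$, and $\widetilde{\mathcal{P}^{0}}\setminus\bigsqcup D_i\cong\mathcal{P}^{0}\setminus\mathrm{Sing}$. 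Consequently the two sequences differ only in the $V$-versus-$\widetilde V$ terms, yielding
\[
H^k(\widetilde{\mathcal{P}^{0}},\mathbb{C})\;\cong\;H^k(\mathcal{P}^{0},\mathbb{C})\;\oplus\;\bigoplus_{i=1}^{28}\widetilde H^k(D_i,\mathbb{C}),
\]
so an extra $\mathbb{C}^{28}$ appears in each of $H^2,H^4,H^6$ and nothing changes in the remaining degrees. (The same computation can be extracted from the Leray spectral sequence of $r$, noting that $R^qr_\ast\mathbb{C}$ is the skyscraper $\bigoplus_i\mathbb{C}^{28}$ at the singular points for $q=2,4,6$ and vanishes otherwise.)

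Finally, by Lemma \ref{PetersL} the map $r^*$ is a morphism of Hodge structures, and it is injective because $\mathcal{P}^{0}$ has quotient (hence rational) singularities. The complement of $r^*H^\bullet(\mathcal{P}^{0},\mathbb{C})$ in the above decomposition is spanned by the fundamental classes $[D_i]$ in $H^2$ and by the push-forwards of powers of the hyperplane class of each $\mathbb{P}^3$ in $H^4$ and $H^6$; these classes are all algebraic, hence of pure Hodge type $(p,p)$ in degree $2p$. Combined with Lemma \ref{P0}, this increases only $h^{1,1}$, $h^{2,2}$ and $h^{3,3}$ by $28$, producing the advertised diamond. The main point requiring care is that the complement of $r^*H^\bullet(\mathcal{P}^{0},\mathbb{C})$ is genuinely a sub--Hodge structure of the claimed type rather than merely a complementary vector subspace; the cleanest route is to invoke the decomposition theorem applied to $r$, though here this is already transparent from the explicit algebraic-cycle description of the new classes.
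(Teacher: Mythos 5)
Your proposal is correct and follows essentially the same strategy as the paper's appendix: compute the Betti numbers of $\widetilde{\mathcal{P}^{0}}$ by a local-to-global comparison that adds exactly one new class in each of the degrees $2,4,6$ for each exceptional divisor $D_i\cong\mathbb{P}^3$, and then upgrade to Hodge numbers via the functoriality of $r^*$ (Lemma \ref{PetersL}), the $(p,p)$-type of the algebraic classes supported on the $D_i$, and the Hodge diamond of $\mathcal{P}^{0}$ (Lemma \ref{P0}). The only difference is bookkeeping: you use Mayer--Vietoris with the links $S^7/\{\pm 1\}$ (thereby re-deriving the restriction isomorphism $H^k(\mathcal{P}^{0},\mathbb{C})\simeq H^k(U,\mathbb{C})$ that the paper cites from Fujiki), whereas the paper uses the long exact sequence of the pair $(\widetilde{\mathcal{P}^{0}},U)$ together with the Thom isomorphism --- an equivalent excision computation yielding the same decomposition.
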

\begin{proof}
Let $U=\mathcal{P}^{0}\smallsetminus \sing \mathcal{P}^{0}=\widetilde{\mathcal{P}^{0}}\smallsetminus \bigcup_{i=1}^{28} D_{i}$.
We consider the following exact sequence that we call $E_k$:
\begin{equation}
\xymatrix@C=10pt{ H^{k-1}(U,\C)\ar[r] & H^{k}(\widetilde{\mathcal{P}^{0}},U,\C)\ar[r]^g & H^{k}(\widetilde{\mathcal{P}^{0}},\C)\ar[r] & H^{k}(U,\C)\ar[r]& H^{k+1}(\widetilde{\mathcal{P}^{0}},U,\C).}
\label{exactSequence}
\end{equation}
We recall from Lemma 1.6 of \cite{Fujiki} that the restriction map:
\begin{equation}
H^{k}(\mathcal{P}^{0},\C)\simeq H^{k}(U,\C)
\label{Lemma1.6}
\end{equation}
is an isomorphism for all $0\leq k\leq6$.
Moreover, by Thom's isomorphism:
\begin{equation}
H^{k}(\widetilde{\mathcal{P}^{0}},U,\C)\simeq\bigoplus_{i=1}^{28}H^{k-2}(D_{i},\C),
\label{Thom}
\end{equation}
for all $k$. Furthermore, the map $g$ can be identified with the push-forward $j_*:\bigoplus_{i=1}^{28}H^{k-2}(D_{i},\C)\rightarrow H^{k}(\widetilde{\mathcal{P}^{0}},\C)$, where $j:\cup_{i=1}^{28}D_i\hookrightarrow \widetilde{\mathcal{P}^{0}}$ is the inclusion.

Now, we are ready to calculate the Hodge numbers.
First, the blow-up does not change the fundamental group, so $b_1(\widetilde{\mathcal{P}^{0}})=0$. Applying (\ref{Lemma1.6}) and (\ref{Thom}) to the exact sequence $E_2$, we obtain:
$$\xymatrix@C=10pt@R=10pt{ H^{1}(\mathcal{P}^{0},\C)=0 \ar@{=}[d]& \bigoplus_{i=1}^{28}H^{0}(D_{i},\C)\ar[rd]^{j_*}\ar@{=}[d] & & H^2(\mathcal{P}^{0},\C)\ar[ld]_{r^*}\ar@{=}[d] & \bigoplus_{i=1}^{28}H^{1}(D_{i},\C)=0\ar@{=}[d]\\
H^{1}(U,\C)\ar[r] & H^{2}(\widetilde{\mathcal{P}^{0}},U,\C)\ar[r]^g & H^{2}(\widetilde{\mathcal{P}^{0}},\C)\ar[r] & H^{2}(U,\C)\ar[r]& H^{3}(\widetilde{\mathcal{P}^{0}},U,\C).}$$
 So, it provides an isomorphism:
$$(r^*,j_*):H^2(\mathcal{P}^{0},\C)\oplus \left(\bigoplus_{i=1}^{28}H^{0}(D_{i},\C)\right)\rightarrow H^2(\widetilde{\mathcal{P}^{0}},\C).$$
We know that the cohomology classes of analytics subsets are of type (p,p) (see for example Theorem 11.31 of \cite{Voisin}). So by Lemma \ref{PetersL} and \ref{P0}:
$$h^{2,0}(\widetilde{\mathcal{P}^{0}})=h^{2,0}(\mathcal{P}^{0})=1\ \text{and}\ h^{1,1}(\widetilde{\mathcal{P}^{0}})=h^{1,1}(\mathcal{P}^{0})+28=42.$$

Applying the same method to the exact sequences $E_3$ and $E_4$, we obtain:
$$H^{3}(\widetilde{\mathcal{P}^{0}},\C)=0,$$
and the isomorphism:
$$(r^*,j_*):H^4(\mathcal{P}^{0},\C)\oplus \left(\bigoplus_{i=1}^{28}H^{2}(D_{i},\C)\right)\rightarrow H^4(\widetilde{\mathcal{P}^{0}},\C).$$
From  Theorem 11.31 of \cite{Voisin}, Lemma \ref{PetersL} and \ref{P0}, it follows:
$$h^{4,0}(\widetilde{\mathcal{P}^{0}})=h^{4,0}(\mathcal{P}^{0})=1,\ h^{3,1}(\widetilde{\mathcal{P}^{0}})=h^{3,1}(\mathcal{P}^{0})=14\ \text{and}\ h^{2,2}(\widetilde{\mathcal{P}^{0}})=h^{2,2}(\mathcal{P}^{0})+28=176.$$
It finishes the proof.
\end{proof}

\end{document}